\theoremstyle{plain}
\newtheorem{theorem}{Theorem}
\newtheorem{corollary}{Corollary}
\newtheorem{lemma}{Lemma}
\theoremstyle{definition}
\newtheorem{definition}{Definition}
\newtheorem{example}{Example}
\newtheorem{notation}{Notation}
\newcommand{\br}[1]{\left( #1 \right)}
\newcommand{\set}[1]{\left\lbrace #1 \right\rbrace}
\newcommand{\abs}[1]{\left\lvert #1 \right\rvert}
\newcommand{\norm}[1]{\left\lVert #1 \right\rVert}
\newcommand{\sbr}[1]{\left[ #1 \right]}
\newcommand{\seg}[1]{\sbr{ #1 }}
\newcommand{\segco}[1]{\left[ #1 \right)}
\newcommand{\segoc}[1]{\left( #1 \right]}
\newcommand{\D}{\;\mathrm{d}}
\title{An Improved Convergence Case for Diophantine Approximations on IFS Fractals}
\author{Itamar Cohen-Matalon}
\date{\today}
\begin{document}

\maketitle

\section{Introduction}
{
    The objective of this paper is to (partially) address the issue of finding an analogue to Khintchine’s theorem for IFS Fractals. We study the convergence case for Diophantine approximations, and show an improved result for higher dimensions.
    This matter has been previously studied by Kleinbock, Lindenstrauss and Weiss in \cite{klw} and by Pollington and Velani in \cite{pv}. \cite{pv} show a similar result to the one in this paper (a Khinchine convergence case) and we shall show how our result is an improvement in the higher dimensional cases. A recent paper by Khalil and Luethi \cite{kl} shows a full Khinchine theorem for a certain class of fractals, but here we try to address a more general class of fractals.
    In this paper, we shall use use the translation into the world of random walks, in the spirit of Simmons and Weiss \cite{sw}, and use results by Benoist and Quint from \cite{bq2} and \cite{bq3}, and Khalil's \cite{khalil} that shall be central to our proof. To introduce our result, we first need to define a few important parameters:
    \begin{definition}[\cite{khalil} (1.3) \& (6.1)]
        For $1 \le k \le d$ we denote by $\mathcal{A}\br{d, k}$ the set of all of the affine subspaces of dimension $k$ in $\mathbb{R}^d$. For $\mathcal{L} \in \mathcal{A}\br{d, k}$ and $\varepsilon > 0$ we denote by $\mathcal{L}^{\br{\varepsilon}}$ the open $\varepsilon$-neighborhood of $\mathcal{L}$ with respect to the Euclidean metric. Given a compactly supported Borel measure $\mu$ on $\mathbb{R}^d$ and $1\le l \le d$ we define:
        \begin{equation}\label{def:alpha}
            \alpha_l\br{\mu} := \liminf\limits_{\varepsilon \rightarrow 0} \frac{\log \sup_{\mathcal{L} \in \mathcal{A}\br{d, d-l}} \mu \br{\mathcal{L}^{\br{\varepsilon}}}}{\log\br{\varepsilon}}.
        \end{equation}
        We shall then define:
        \begin{equation}
            \varpi := \min_{1 \le l \le d} \alpha \br{l}\br{d - l + 1}.
        \end{equation}
    \end{definition}
    Our main result is as follows (see Section \ref{section:def} where we present the general setting and definitions):
    \begin{theorem}\label{theorem:main}
    	Let $\mathcal{K} \subseteq \mathbb{R}^d$ be the limit set of an irreducible, finite system of contracting similarity maps, with a common contraction ratio $0 < \kappa < 1$, satisfying the open set condition and let $\mu_{\mathcal{K}}$ be a Bernoulli measure on $\mathcal{K}$.
    	Let $0 < \alpha < \varpi$, $\psi: \mathbb{N} \rightarrow \mathbb{R}$ monotone decreasing such that:
        \begin{equation}
            \sum_{x=1}^{\infty} x^{\frac{\alpha}{d}-1}\psi^{\alpha}\br{x} < \infty,
        \end{equation}
        then $\mu_{\mathcal{K}}$-a.e. $\mathbf{x} \in \mathcal{K}$ is not $\psi$-approximable.
    \end{theorem}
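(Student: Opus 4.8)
The plan is to prove the theorem by a convergence Borel--Cantelli argument in the homogeneous space $X := \mathrm{SL}_{d+1}(\mathbb{R})/\mathrm{SL}_{d+1}(\mathbb{Z})$, with the quantitative input supplied by effective equidistribution of the random walk attached to the IFS. Set $u_{\mathbf{x}} = \begin{pmatrix} I_d & \mathbf{x}\\ 0 & 1\end{pmatrix}$ and $g_t = \mathrm{diag}(e^{t/d},\dots,e^{t/d},e^{-t})$. By the Dani correspondence, if $\mathbf{x}$ is $\psi$-approximable then, since $\psi$ is monotone, $g_n u_{\mathbf{x}}\mathbb{Z}^{d+1}\in\Delta_n$ for infinitely many integers $n$, where $\Delta_n\subseteq X$ is the set of unimodular lattices containing a nonzero vector $v$ with $\|(v_1,\dots,v_d)\|_\infty<\rho_n$ and $|v_{d+1}|\asymp 1$, and $\rho_n := e^{n/d}\psi(e^n)$. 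Pushing $\mu_{\mathcal{K}}$ forward under $\mathbf{x}\mapsto u_{\mathbf{x}}\mathbb{Z}^{d+1}$ to a measure $\bar\mu$ on $X$, it therefore suffices, by the convergence half of the Borel--Cantelli lemma, to establish
\begin{equation}\label{eq:bcsum}
  \sum_{n\ge 1}\big((g_n)_*\bar\mu\big)(\Delta_n)<\infty.
\end{equation}
A routine comparison using monotonicity of $\psi$ shows that the hypothesis $\sum_x x^{\alpha/d-1}\psi^{\alpha}(x)<\infty$ forces $\sum_n\rho_n^{\alpha}<\infty$, so it is enough to prove the core estimate $\big((g_n)_*\bar\mu\big)(\Delta_n)\ll_{\alpha}\rho_n^{\alpha}$ for every $\alpha<\varpi$.

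Next I exploit the self-similar structure, in the spirit of Simmons--Weiss \cite{sw}. Because the maps of the system share the contraction ratio $\kappa$, the operation ``apply a random $f_i$'' lifts, under $\mathbf{x}\mapsto u_{\mathbf{x}}\mathbb{Z}^{d+1}$, to ``act on $X$ by a random element $h_i$ built from the rotation $O_i$ and the translation $b_i$ of $f_i$, composed with a fixed diagonal element $g_s$'', where $s>0$ is the multiple of $\log(1/\kappa)$ matching the contraction; consequently, after the harmless reparametrization of the flow by the factor $s$ --- which we henceforth suppress --- the pushforward $(g_n)_*\bar\mu$ is the distribution after $n$ steps of the random walk on $X$ whose driving measure is built from the $h_i$. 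Irreducibility of the system is precisely the non-degeneracy hypothesis under which the stationary-measure and equidistribution theorems of Benoist--Quint \cite{bq2,bq3}, together with their effective, exponentially mixing refinement by Khalil \cite{khalil}, apply: they yield $(g_n)_*\bar\mu\to m_X$, the Haar measure on $X$, with an error bounded by $e^{-\lambda n}$ times a Sobolev norm of the test function. Finally, the open set condition guarantees that $\mu_{\mathcal{K}}$ carries its full similarity dimension, so that the non-concentration exponents $\alpha_l(\mu_{\mathcal{K}})$ of \eqref{def:alpha}, and hence $\varpi$, are positive.

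It remains to prove the core estimate, which is the heart of the matter. I split $\Delta_n$ into a moderate part $\Delta_n^{\le D}$, consisting of lattices whose deepest cusp excursion has depth at most $D$, and a deep part $\Delta_n^{>D}$. On the moderate part I invoke Khalil's effective equidistribution: after mollifying the indicator of $\Delta_n^{\le D}$ at a scale $\delta$, its integral against $(g_n)_*\bar\mu$ equals $m_X$ of a box-shaped cusp neighborhood --- which is $\asymp\rho_n^{d}\le\rho_n^{\alpha}$ --- plus an error of size $e^{-\lambda n}$ times a Sobolev norm of the mollification, and one chooses $D$ and $\delta$ so that this error, too, is $\ll_\alpha\rho_n^{\alpha}$. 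On the deep part --- where equidistribution is useless because the Sobolev norm of any reasonable test function overwhelms the mixing rate --- I argue geometrically: a lattice $g_n u_{\mathbf{x}}\mathbb{Z}^{d+1}$ can lie at cusp-depth exceeding $D$ only if $\mathbf{x}$ is extremely close to a rational affine subspace of $\mathbb{R}^d$, and the $\mu_{\mathcal{K}}$-measure of the set of such $\mathbf{x}$ is bounded using the non-concentration estimates defining the $\alpha_l(\mu_{\mathcal{K}})$; summing over the rational affine subspaces in play and optimizing over their dimension $d-l$ produces exactly the weighted minimum $\varpi=\min_{1\le l\le d}\alpha_l(\mu_{\mathcal{K}})\,(d-l+1)$, so that $\Delta_n^{>D}$ contributes $\ll\rho_n^{\varpi-\varepsilon}\ll_\alpha\rho_n^{\alpha}$. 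Adding the two regimes yields the core estimate, hence \eqref{eq:bcsum}; the convergence Borel--Cantelli lemma then gives that $\bar\mu$-a.e. lattice meets only finitely many of the $\Delta_n$ along the discrete flow, and with the implication of the first step this says precisely that $\mu_{\mathcal{K}}$-a.e. $\mathbf{x}$ is not $\psi$-approximable. The genuinely delicate point --- where the quantitative strength of \cite{khalil} (a spectral gap for the averaging operator on a suitable Sobolev space) and the precise shape of the non-concentration exponents are both indispensable --- is the uniformity in $n$: the split, the mollification, and the optimization over subspace dimensions must all be run simultaneously for the entire sequence $(\Delta_n)$ of shrinking, time-dependent targets while keeping the resulting exponent sharp at $\varpi$.
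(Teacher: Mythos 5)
Your overall architecture (Dani correspondence $\to$ shrinking targets $\Delta_n$ in $X$ $\to$ convergence Borel--Cantelli, with the exponent $\varpi$ emerging from non-concentration near affine subspaces) is a reasonable and recognizable strategy, and your identification of where $\varpi=\min_l \alpha_l(d-l+1)$ should come from (depth of cusp excursions forcing $\mathbf{x}$ near rational affine subspaces of various codimensions) is exactly the mechanism behind Khalil's contraction hypothesis. But the proposal has a genuine gap at its load-bearing step: the ``moderate part'' of the core estimate is made to rest on effective equidistribution of $(g_n)_*\bar\mu$ towards Haar measure with an error $e^{-\lambda n}$ against Sobolev norms. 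No such statement is available for the class of measures in the theorem. Benoist--Quint equidistribution requires the acting group to have semisimple Zariski closure, whereas the random walk here lives in the parabolic $P=AKU$ (the similarity group), so their equidistribution theorems do not apply; what \cite{bq2,bq3} contribute to this problem is the recurrence machinery of Lemma \ref{lemma:bq6.3}, not equidistribution. Likewise, \cite{khalil} supplies a Margulis--Lyapunov (contraction) inequality, Theorem \ref{thm:ch}, not a spectral gap or an effective equidistribution theorem. Effective equidistribution of $(g_n)_*\bar\mu$ for fractal measures is precisely the content of \cite{kl}, which is only known for a restricted (arithmetic/rational) class of self-similar measures --- and the whole point of the present theorem is to treat the general irreducible case where that input is missing. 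Without it, $\Delta_n^{\le D}$ is a set of small Haar measure, but there is no a priori reason the pushforward of $\mu_{\mathcal K}$ assigns it mass comparable to $m_X(\Delta_n)\asymp\rho_n^{d}$; your argument for the moderate regime therefore does not close.

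The paper circumvents this entirely. It uses the contraction inequality $\int_{\mathcal K} f_m(g_{\kappa^m}u_{\mathbf x}y)\,d\mu_{\mathcal K}(\mathbf x)\le A_m f_m(y)+B_m$ with $A_m=c\kappa^{m\varrho\varpi/(d+1)}$ only through Lemma \ref{lemma:bq6.3}, to produce a compact set $Y$ to which the random walk is exponentially recurrent with rate $\delta/m$ arbitrarily close to $-\frac{\varpi}{d+1}\log\kappa$ (Corollary \ref{cor:from_lemma_63}). A Chebyshev--Borel--Cantelli argument on the i.i.d.\ excursions then bounds the length of the $n$-th excursion from $Y$ by $r(n)$ whenever $\sum_n e^{-\delta r(n)/m}<\infty$, and an elementary geometry-of-numbers estimate converts an excursion of length $\sigma$ into a bound $\ll\sigma$ on $\log\Delta^{-1}$ along that excursion (Lemma \ref{lemma:growth_bound}). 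Passing from the walk to the diagonal flow via the shadowing lemma of \cite{sw} and then applying the Dani correspondence and a change of variables yields the theorem. In other words, the quantitative input is a drift inequality exploited through return times, not through decay of correlations; if you want to keep your direct shrinking-target formulation, you would need to replace the equidistribution step by iterating the drift inequality (e.g.\ $\int f_m\,d(g_{nm})_*\bar\mu\le (CA_m)^n f_m(x_0)+O(B_m)$ plus Chebyshev), which is essentially a repackaging of the paper's recurrence argument rather than an application of equidistribution.
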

    An interesting example we shall observe, is the product of Cantor sets, for which we shall calculate the exact constants:
    \begin{lemma}\label{lemma:consts}
        Let $d \ge 1$. Let $\mathcal{K} = \mathcal{C}^d$ where $\mathcal{C}$ is the standard middle thirds Cantor set. Let $s = \operatorname{dim}_H \br{\mathcal{K}}$, and let $\mu_{\mathcal{K}}$ be the restriction to $\mathcal{K}$ of the $s$-dimensional Hausdorff measure. Then $\varpi = s = d \frac{\log 2}{\log 3}$.
    \end{lemma}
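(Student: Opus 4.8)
The plan is to compute $\alpha_l(\mu_{\mathcal K})$ exactly for each $l$ and then take the minimum. Write $s_0:=\tfrac{\log 2}{\log 3}$, so that the assertion is $\varpi=s=d\,s_0$. First I would observe that $\mathcal C^d$ is itself the attractor of the $2^d$ homotheties $x\mapsto\tfrac13 x+v$, $v\in\{0,\tfrac23\}^d$, which satisfy the open set condition (with open set $(0,1)^d$); hence $\dim_H\mathcal C^d=\log(2^d)/\log 3=d\,s_0=:s$, and the restricted Hausdorff measure $\mu_{\mathcal K}=\mathcal H^s|_{\mathcal K}$ is, up to a positive finite multiplicative constant, the natural equal-weight Bernoulli measure, which is the $d$-fold product $\nu^{\otimes d}$ of the Cantor measure $\nu$ on $\mathcal C$. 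Because the $\liminf$ in \eqref{def:alpha} is insensitive to multiplicative constants, I may work with $\nu^{\otimes d}$ throughout. I would also record two elementary one-dimensional estimates, both immediate from the triadic cylinder structure of $\nu$:
\begin{equation*}
\nu(I)\le 4\,|I|^{s_0}\ \ \text{for every interval }I,\qquad\text{and}\qquad \nu\bigl(-t,t\bigr)\ge\tfrac12\,t^{s_0}\ \ \text{for }0<t<1.
\end{equation*}

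Next I would show $\alpha_l(\nu^{\otimes d})=l\,s_0$ for every $1\le l\le d$. For the inequality $\alpha_l\le l\,s_0$, take the coordinate plane $\mathcal L_0=\{x_{d-l+1}=\dots=x_d=0\}\in\mathcal A(d,d-l)$; then $\mathcal L_0^{(\varepsilon)}\supseteq\{x:|x_j|<\varepsilon/\sqrt l\text{ for }j>d-l\}$, so the lower estimate gives $\nu^{\otimes d}\bigl(\mathcal L_0^{(\varepsilon)}\bigr)\ge\bigl(\tfrac12(\varepsilon/\sqrt l)^{s_0}\bigr)^{l}\ge c(d)\,\varepsilon^{l s_0}$ for all $0<\varepsilon<1$, and dividing logarithms yields $\alpha_l\le l\,s_0$. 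For the reverse inequality I would slice rather than cover. Given $\mathcal L\in\mathcal A(d,d-l)$ with direction space $W$, choose a $(d-l)$-element coordinate block $A$ for which the corresponding $(d-l)\times(d-l)$ submatrix of an orthonormal basis matrix of $W$ is invertible and has maximal absolute determinant; writing $\mathbb R^d=\mathbb R^{A}\times\mathbb R^{B}$ with $B$ the complementary block, $\mathcal L$ is the graph $\{(u,Tu+b):u\in\mathbb R^{A}\}$ of an affine map whose linear part $T$, by Cramer's rule and maximality of the minor, has all entries of modulus $\le 1$, so $\|T\|\le d$. Then $\Phi(p,q):=q-Tp-b$ vanishes on $\mathcal L$ and is $L$-Lipschitz with $L\le d+1$, whence $\mathcal L^{(\varepsilon)}\subseteq\{(p,q):|q_j-(Tp)_j-b_j|<L\varepsilon\text{ for all }j\in B\}$. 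Integrating over $p$ against $\nu^{\otimes(d-l)}$ and using that, conditionally, the $B$-coordinates are independent and $\nu$-distributed, together with $\nu(I)\le 4|I|^{s_0}$, one gets $\nu^{\otimes d}\bigl(\mathcal L^{(\varepsilon)}\bigr)\le C(d)\,\varepsilon^{l s_0}$ uniformly in $\mathcal L$; hence $\alpha_l\ge l\,s_0$.

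It then remains only to minimise. One obtains
\begin{equation*}
\varpi=\min_{1\le l\le d}\alpha_l(\mu_{\mathcal K})\,(d-l+1)=s_0\cdot\min_{1\le l\le d} l\,(d-l+1),
\end{equation*}
and since $l\mapsto l(d-l+1)=-l^2+(d+1)l$ is concave, its minimum over $\{1,\dots,d\}$ is attained at an endpoint, where it equals $d$; therefore $\varpi=d\,s_0=s=d\tfrac{\log 2}{\log 3}$.

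I expect the only real obstacle to be the uniformity in $\mathcal L$ of the bound $\sup_{\mathcal L}\nu^{\otimes d}(\mathcal L^{(\varepsilon)})\le C(d)\varepsilon^{l s_0}$. Covering $\mathcal L^{(\varepsilon)}$ by $\varepsilon$-balls throws away the Cantor structure and yields only $\alpha_l\ge s-d+l$, which is too weak --- indeed negative --- for small $l$; the point is that a product measure must be sliced, not covered, so that conditioning on the right block of $d-l$ coordinates reduces the estimate to the one-dimensional Ahlfors bound in each of the remaining $l$ coordinates, and the sole quantitative input this requires is the maximal-minor fact that every $(d-l)$-plane is a uniformly Lipschitz graph over some coordinate block.
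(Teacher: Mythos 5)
Your proof is correct, and for the key estimate it takes a genuinely different (and more complete) route than the paper. Both arguments obtain $\alpha_l\le l\tfrac{\log 2}{\log 3}$ from the same witness, a coordinate $(d-l)$-plane whose $\varepsilon$-neighbourhood has measure comparable to $\varepsilon^{l\log 2/\log 3}$ (the paper writes this step with the inequality reversed, but the computation is the one you perform, and your direction is the right one: a single subspace lower-bounds the supremum inside the logarithm and hence upper-bounds $\alpha_l$). The substantive half is the uniform bound $\sup_{\mathcal L}\mu\bigl(\mathcal L^{(\varepsilon)}\bigr)\le C(d)\,\varepsilon^{l\log 2/\log 3}$. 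The paper establishes this only in codimension one, by an induction on $d$ that covers $\mathcal L^{(\varepsilon)}$ with $O\bigl(2^{(d-1)n}\bigr)$ triadic cubes of side $3^{-n}$, slicing along the coordinate with the smallest coefficient of the hyperplane; you instead realise an arbitrary $(d-l)$-plane as a uniformly Lipschitz graph over a coordinate block via the maximal-minor argument and integrate the graph condition out against the product measure, which treats all codimensions at once. This difference is not cosmetic: concluding $\varpi\ge d\tfrac{\log 2}{\log 3}$ requires $\alpha_l\ge l\tfrac{\log 2}{\log 3}$ for \emph{every} $l$, and the hyperplane case alone does not yield it (monotonicity only gives $\alpha_l\ge\alpha_1$, hence merely $\varpi\ge\tfrac{\log 2}{\log 3}$), so your slicing/Fubini argument supplies exactly the general-$l$ input that the paper's final minimisation implicitly uses. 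The trade-off is that your argument leans entirely on the product structure of $\mathcal C^d$, whereas the paper's cube-covering induction works within the IFS cylinder structure and would adapt more readily to non-product self-similar sets; but as a proof of the stated lemma, yours is the tighter and logically complete one.
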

    and this shall give us the the specific result:
    \begin{corollary}
        For $\mathcal{K} = \mathcal{C}^d$ and $\mu_{\mathcal{K}}$ as in Lemma \ref{lemma:consts}, the result of the theorem holds for $0 < \alpha < \varpi = d \frac{\log 2}{\log 3}$.
    \end{corollary}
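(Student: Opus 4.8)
The plan is to deduce the corollary directly from Theorem~\ref{theorem:main} together with Lemma~\ref{lemma:consts}: once we know that the pair $\br{\mathcal{C}^d,\mu_{\mathcal{K}}}$ falls under the scope of Theorem~\ref{theorem:main}, Lemma~\ref{lemma:consts} supplies the value $\varpi = d\frac{\log 2}{\log 3}$, and the statement is then just the theorem specialized to this case. So all the work lies in checking the hypotheses. First I would write down the generating system explicitly: $\mathcal{C}^d$ is the limit set of the finite family $\set{\phi_{\mathbf{w}} : \mathbf{w} \in \set{0,2}^d}$, where $\phi_{\mathbf{w}}\br{\mathbf{x}} = \tfrac13\mathbf{x} + \tfrac13\mathbf{w}$; these are $2^d$ contracting similarity maps with the single common contraction ratio $\kappa = \tfrac13$. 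The open set condition holds with the open unit cube $\br{0,1}^d$ as witnessing set, since the images $\phi_{\mathbf{w}}\br{\br{0,1}^d}$ are pairwise disjoint open subcubes of side $\tfrac13$ contained in $\br{0,1}^d$.

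Next I would verify irreducibility in the sense of Section~\ref{section:def}. No proper affine subspace $V \subsetneq \mathbb{R}^d$ is invariant under all the $\phi_{\mathbf{w}}$: writing $V = W + \mathbf{a}$ with $W$ its direction space, the inclusions $\phi_{\mathbf{w}}\br{V} \subseteq V$ force $W$ to contain $\tfrac13\br{\mathbf{w} - \mathbf{w}'}$ for all $\mathbf{w},\mathbf{w}' \in \set{0,2}^d$, and these differences span $\mathbb{R}^d$, so $W = \mathbb{R}^d$; equivalently, $\mathcal{C}^d$ is not contained in a proper affine subspace, since $\mathbf{0},\mathbf{e}_1,\dots,\mathbf{e}_d \in \mathcal{C}^d$. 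For the measure, I would invoke the classical fact that under the open set condition the (normalized) $s$-dimensional Hausdorff measure on a self-similar set coincides with the self-similar Hutchinson measure whose weights are proportional to $r_i^{\,s}$; here all $r_i = \tfrac13$, so by the Moran equation $2^d\br{\tfrac13}^s = 1$ the weights are all equal to $2^{-d}$, and hence $\mu_{\mathcal{K}}$, after normalizing to a probability measure (which does not affect any $\mu_{\mathcal{K}}$-a.e.\ statement), is precisely the uniform Bernoulli measure on $\mathcal{C}^d$ demanded by Theorem~\ref{theorem:main}.

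With all hypotheses in place, Lemma~\ref{lemma:consts} gives $\varpi = d\frac{\log 2}{\log 3}$, so for every $0 < \alpha < \varpi$ and every monotone decreasing $\psi : \mathbb{N} \to \mathbb{R}$ with $\sum_{x=1}^{\infty} x^{\frac{\alpha}{d}-1}\psi^{\alpha}\br{x} < \infty$, Theorem~\ref{theorem:main} yields that $\mu_{\mathcal{K}}$-a.e.\ $\mathbf{x} \in \mathcal{C}^d$ is not $\psi$-approximable, which is exactly the assertion of the corollary. The argument is essentially bookkeeping; the only points that require any care are matching the paper's precise notion of irreducibility from Section~\ref{section:def} and the standard identification of the restricted Hausdorff measure with the uniform Bernoulli measure, and neither constitutes a genuine obstacle.
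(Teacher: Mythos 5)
Your proposal is correct and follows the same route the paper takes: the corollary is an immediate specialization of Theorem~\ref{theorem:main} once Lemma~\ref{lemma:consts} supplies $\varpi = d\frac{\log 2}{\log 3}$, with the hypothesis checks (open set condition, irreducibility, common ratio $\kappa=\tfrac13$, and the identification of the restricted Hausdorff measure with the uniform Bernoulli measure as in Example~\ref{example:hausdorff}) being exactly the bookkeeping the paper leaves implicit. Your more explicit verification of these hypotheses is a welcome addition but does not change the argument.
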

    This result improves upon a previous result by Pollington \& Velani in \cite{pv}. The Pollington-Velani result states, in our notation, that if the following series converges:
    \begin{equation}
        \sum_{x=1}^{\infty} x^{\frac{\alpha_1}{d}-1} {\psi\br{x}}^{\alpha_1},
    \end{equation}
    then $\mu_{\mathcal{K}}\br{\psi-approximable} = 0$, where $\alpha_1 = \alpha_1\br{\mu_{\mathcal{K}}}$ as defined in $\br{\ref{def:alpha}}$.
    \par Our result from Theorem \ref{theorem:main} states that for $0< \alpha < \varpi$, if the following series converges:
    \begin{equation}
        \sum_{x=1}^{\infty} x^{\frac{\alpha}{d}-1} {\psi\br{x}}^{\alpha},
    \end{equation}
    then $\mu_{\mathcal{K}}\br{\psi-approximable} = 0$. Taking as an example the Cantor set product $\mathcal{C}^d$ demonstrates the strength of our result. When $d > 1$ we have $\varpi = d\frac{\log 2}{\log 3} > \frac{\log{2}}{\log{3}} = \alpha_1$, making our result a tighter one.
    When $d=1$, $\varpi = \alpha_1$, our result is almost as good, as we have $\alpha < \varpi = \alpha_1$, and the result from \cite{pv} can handle more general functions.
    \par We believe that with some additional work it should be possible to adjust the proof and remove the ``common contraction ratio" limitation from the iterated function system in Theorem \ref{theorem:main}.
    \par Acknowledgements: This work is part of the authors M.Sc. thesis, conducted in Tel Aviv University under the supervision of Barak Weiss. The author acknowledges funding through grants ISF 2919/19 and BSF 2016256.
    
    \subsection{Proof Outline}
    {
        In order to prove Theorem \ref{theorem:main}, we will start by proving bounds on the behaviour of the random walk that corresponds to the IFS that generates $\mathcal{K}$, and later, using a Lemma from \cite{sw} and the Dani Correspondence from Kleinbock \& Margulis's \cite{km99}, translate this result into the required bound on approximations.
        \par For giving a bound on the escape rate of the IFS random walk, we shall use the machinery of Benoist-Quint (\cite{bq2} \& \cite{bq3}) and observe an exponentially recurrent subset $Y \subset X$, to which we shall (with probability 1) return. Using the upgraded contraction hypothesis given by \cite{khalil}, we can give an interesting bound on the length of each excursion from $Y$ and back of the random walk. The bounds on the excursions will translate into escape distance bounds, which we will then be able to translate into the required approximation bounds.
    }
}
\section{Setting and Definitions}\label{section:def}
{
    \begin{definition}
        Let $\psi: \mathbb{N} \rightarrow \segco{0,\infty}$ and $\mathbf{x} \in \mathbb{R}^d$. We shall say $\mathbf{x}$ is \textbf{$\psi$-approximable} if for infinitely many $q \in \mathbb{N}$ there exists some $\mathbf{p} \in \mathbb{Z}^d$ such that $\norm{\mathbf{x} - \frac{\mathbf{p}}{q}}_\infty < \frac{\psi\br{q}}{q}$.
    \end{definition}
    Let $G = \textrm{SL}_{d+1}\br{\mathbb{R}}$, $\Gamma = \textrm{SL}_{d+1}\br{\mathbb{Z}}$, $X = G/\Gamma$. Let $E$ be a finite set, $s \mapsto h_s$ is a map from $E$ to $G$ and $\mu \in \operatorname{Prob}\br{E}$ a measure such that $\operatorname{supp}\br{\mu} = E$. Sometimes we will choose to consider $E \subseteq G$, and then $s = h_s$. We denote $B = E^{\mathbb{N}}$. For $b = \br{s_1, s_2, ...} \in B$ and $n \in \mathbb{N}$, we write $b^n_1 = \br{s_1, ..., s_n}$ and $h_{b^n_1}$ denotes the product $h_{s_n} \cdots h_{s_1}$. $B$ is equipped with the measure $\beta = \mu ^ {\otimes \mathbb{N}}$, and $T: B \rightarrow B$ is the left shift. We shall define a random walk on $X$ using the transition probabilities that are determined by $\mu$ in the following manner: $P_x$ is the pushforward of $\mu$ under the map $s \mapsto h_s x$. I.e.:
    \begin{equation}
        P_x\br{A} = \int_G \mathbb{1}_A \br{h_s x} \D \mu\br{s}.
    \end{equation}
    \par Specifically, we will be interested in finite, contracting Iterated Function Systems (IFS). This is a set of similarity maps which are maps $\Phi = \br{\phi_s}_{s\in E}$, where a map $\phi_s: \mathbb{R}^d\rightarrow\mathbb{R}^d$ is called a similarity map if:
    \begin{equation}
        \phi\br{x} = \kappa_s x O_s + y_s,
    \end{equation}
    where $O_s$ is a $d \times d$ matrix orthogonal with respect to the inner product, $\kappa_s \in \mathbb{R}$, $\kappa_s > 0$ and $y_s \in \mathbb{R}^d$.
    We will require they be \textit{contracting}, i.e. with a \textit{contraction ratio} - $\kappa_s$ that satisfies $\kappa_s < 1$. We will also be limiting ourselves to IFSes with a common contraction ratio for all maps. Notice that we chose $x$ to be a row vector, with matrices acting from the right, as this will be more consistent with the calculations in the paper.
    \par An IFS $\Phi$ is said to satisfy the \textit{open set condition} if $\exists U \subseteq \mathbb{R}^d$ open and non-empty such that $\br{\phi_s\br{U}}_{s\in E}$ is a disjoint collection of subsets of $U$ ($\phi_s\br{U} \subseteq U$ for each $s \in E$). It is also called \textit{irreducible} if there is no proper affine subspace $\mathcal{L} \subsetneq \mathbb{R}^d$ such that:
    \begin{equation}
        \phi_s\br{\mathcal{L}} = \mathcal{L} \quad\quad \forall s \in E.
    \end{equation}
    \begin{definition}[Coding Map]\label{def:coding_map}
        The coding map of $\Phi$ shall be defined as $\pi: B \rightarrow \mathbb{R}^d$ by:
        \begin{equation}
            \pi\br{b} = \lim\limits_{n \rightarrow \infty} \phi_{b^1_n} \br{\alpha_0},
        \end{equation}
        where $\alpha_0 \in \mathbb{R}^d$ is an arbitrary fixed point and:
        \begin{equation}
            \phi_{b^1_n} = \phi_{b_1} \circ \cdots \circ \phi_{b_n}.
        \end{equation}
        The image of $B$ under the coding map $\pi$ is called the \textit{limit set} of $\Phi$. We denote it by $\mathcal{K} = \mathcal{K}\br{\Phi}$. In our setting, of a finite strictly contracting IFS, $\pi$ converges everywhere and is continuous \cite{sw} (chapter 8.1), and therefore the limit set $\mathcal{K}$ is compact.
    \end{definition}
    \begin{definition}[Bernoulli Measure on $\mathcal{K}$]\label{def:bernoulli}
        A measure $\mu_\mathcal{K}$ is called a Bernoulli measure on $\mathcal{K}$ if it is the pushforward of such a measure $\beta = \mu ^ {\otimes \mathbb{N}}$ onto the fractal $\mathcal{K}$ by the coding map $\pi$.
    \end{definition}
    \begin{example}\label{example:hausdorff}
        An interesting example of such a Bernoulli measure is the Hausdorff measure on the fractal. Let $s = \operatorname{dim}_H \br{\mathcal{K}}$, and let $\mu_\mathcal{K}$ be the restriction to $\mathcal{K}$ of the $s$-dimensional Hausdorff measure. A result by Hutchinson shows that $\mu_\mathcal{K}$ is in-fact a Bernoulli measure, resulting when $\mu$, the measure on $E$ is chosen to be uniform \cite{sw} (Chapter 8.1).
    \end{example}
    We shall fix $\Phi = \set{\phi_s}_{s \in E}$ to be an irreducible, finite system of contracting similarity maps on $\mathbb{R}^d$, chosen according to a law $\mu$, with a common contraction ratio $0 < \kappa < 1$, satisfying the open set condition. We construct the random walk defined by $H = \set{h_s}_{s \in E} \subseteq G$ acting on the space $X$ in Chapter \ref{ifs_translation}, where each step is chosen by the same law $\mu$.
    \par We shall denote the limit set by $\mathcal{K} = \mathcal{K}\br{\Phi}$, and the Bernoulli measure which is the projection of $\mu ^ {\otimes \mathbb{N}}$ onto $\mathcal{K}$ by $\mu_{\mathcal{K}}$. Note that $\mu_{\mathcal{K}}$ depends on $\mu$, despite the fact that our notation does not show this.
    \subsection{Translation Between Iterated Function Systems and Random Walks}\label{ifs_translation}
    {
        To those unfamiliar with the notations of \cite{sw}, this chapter will introduce the translation between IFS language and Random Walk language (as done in Chapter 10.1 of \cite{sw}).
        
        \par Given an IFS $\br{\phi_s}_{s \in E}$ our objective is to translate the maps $\phi_s$ into elements $h_s$ that act on $X$. Notice that in the IFS setup, we were interested in $\phi_{b_n^1}$ (see Definition \ref{def:coding_map}), and in the homogeneous space we are interested in the objects $h_{b_1^n} := h_{b_n} \cdots h_{b_1}$. Therefore we would like the relation to be of the form $h_s = \phi_s^{-1}$, so that the coding map agrees with the random walk and we have $h_{b_1^n} = \phi_{b_n^1}^{-1}$. So we need to understand how to view the similarity map as an element of $G$.
        \begin{notation}\label{subgroups}
        Let $\alpha \in \mathbb{R}^d$. We define:
            \begin{equation}
                a_t = 
                \begin{bmatrix}
                    e^{t} & 0     \\
                    0   & e^{-t/d}I_d
                \end{bmatrix}, \quad
                u_\alpha = 
                \begin{bmatrix}
                    1 & -\alpha     \\
                    0 & I_d
                \end{bmatrix},
            \end{equation}
            and $A = \set{a_t: t\in\mathbb{R}}, U = \set{u_\alpha: \alpha \in \mathbb{R}^d}$. Fixing an inner product on $R^d$, we shall define by $O_d$ the group of matrices preserving this inner product, and define:
            \begin{equation}
                K = \set{
                    \begin{bmatrix}
                        1 & 0     \\
                        0 & O
                    \end{bmatrix}
                    \mid O \in O_d
                }.
            \end{equation}
            Finally let $P=AKU < G$. Notice that $A$ and $K$ commute and normalize $U$.
        \end{notation}
        As $A$ and $K$ normalize $U$, we have the natural projections $\pi_A: P \rightarrow A$ and $\pi_K: P \rightarrow K$ which are homeomorphisms. Observe $\iota: \mathbb{R}^d \rightarrow P/AK$ defined by $\iota\br{\alpha} = u_{-\alpha} AK$. $\iota$ is a homeomorphism, and $\iota\br{0}$ is the identity coset $AK \in P/AK$.
        \par Now consider the action $\rho$ of $P$ on $\mathbb{R}^d$ that results from conjugating the action of $P$ on $P/AK$ by left multiplication, by $\iota$.
        \par We'll have that:
        \begin{equation}
            \begin{split}
                \rho\br{u_\alpha} \br{\beta} &= \iota^{-1}\br{u_\alpha\iota\br{\beta}} = \iota^{-1}\br{u_\alpha u_{-\beta} AK} = \iota^{-1}\br{u_{\alpha -\beta}} = \beta - \alpha \\
                \rho\br{a_t} \br{\beta} &= \iota^{-1}\br{a_t \iota\br{\beta}} = \iota^{-1}\br{a_t u_{\beta} AK} = \iota^{-1}\br{a_t u_{\beta} a_{-t} AK} = \iota^{-1}\br{u_{e^{t+t/d}\beta} AK} = e^{t+t/d} \beta \\
                \rho\br{O'} \br{\beta} &= \iota^{-1}\br{O' \iota\br{\beta}} = \iota^{-1}\br{O' u_{\beta} AK} = \iota^{-1}\br{O' u_{\beta} O'^{-1} AK} = \iota^{-1}\br{u_{\beta O^{-1}} AK} = \beta O^{-1},
            \end{split}
        \end{equation}
        where $O' = \begin{bmatrix} 1 & 0 \\ 0 & O \end{bmatrix}$. Notice that $\rho$ is faithful, and $\rho{P}$ is the group of similarity maps on $\mathbb{R}^d$. We can therefore identify $P$ with the group of similarity maps of $\mathbb{R}^d$, allowing the translation into the matrices $h_s$ acting on $X$ in the random walk process by: $h_s = \phi_s^{-1}$.
    }
}
\section{Exponentially Recurrent Subset}
{
    The core of the proof relies on bounding excursions from a specific compact set within $X$, to which the random walk returns with probability 1. The machinery for finding such sets was introduced by Benoist-Quint in \cite{bq2} and \cite{bq3}. In the key result of this chapter - Corollary \ref{cor:from_lemma_63}, we prove the existence of such a family of sets, with specific parameters that shall allow us to prove the bounds on excursions in Chapter \ref{sec:excursion_bounds}.
    \begin{definition}
        Let $Y \subset X$ be a Borel set. We define the \textit{return time} $\tau_{Y,x}:B \rightarrow \mathbb{N}_{\ge 1}\cup\set{\infty}$ for $b \in B$ as:
        \begin{equation}
            \tau_{Y,x}\br{b} = \inf\set{n\ge 1 \mid h_{b^n_1} x \in Y}.
        \end{equation}
    \end{definition}
    \begin{definition}
        We define the $n$-th consecutive return time, for $n \ge 2$ as:
        \begin{equation}
            \tau_{Y, x}^n\br{b} = \inf\set{n > \tau_{Y, x}^{n-1} \mid h_{b^n_1} x \in Y},
        \end{equation}
        and set $\tau_{Y, x}^1 = \tau_{Y, x}$.
    \end{definition}
    We may sometimes drop part of the notation parameters when they are clear from the context, as it will make equations easier to read.
    \begin{notation}
        We denote the sub-level sets of a function $f:X \rightarrow \seg{0, \infty}$ as: $X_M = \set{x: f\br{x} \le M}$ and denote $\tau_{M,x} := \tau_{X_M,x}$.
    \end{notation}
    \begin{definition}[\cite{bq2} - Definition 6.1]
        We say that $Y$ is exponentially $\mu$ recurrent if for some $a_0 < 1$ one has:
        \begin{equation}
            \sup\limits_{x \in Y} \int_B a_0^{-\tau_{Y,x}\br{b}} \D \beta\br{b} < \infty.
        \end{equation}
    \end{definition}
    \begin{definition}
        We denote by $P_{\mu}$ the one step averaging operator of the random walk on $X$ defined by $P\br{f}\br{x} = \int_E f\br{h_s x} \D \mu \br{s}$.
    \end{definition}
    \begin{lemma}[\cite{bq2} - Proposition 6.3]\label{lemma:bq6.3}
        Using the existing notations, let $\mu$ a Borel probability on $G$ and denote by $P_{\mu}$ the one step averaging operator for $\mu$. Let $f: X \rightarrow \seg{0, \infty}$ be a non-negative Borel function satisfying $P_{\mu} f \le a f + c$ for some $0 < a < 1$ and $c > 0$. Let $a < a_0 < 1$ and let $M \ge M_0 = \frac{b+1}{a_0-a}$. Then for all $x \in X$:
        \begin{equation}
            \int_B a_0^{-\tau_{M,x}\br{b}}\D \beta\br{b} \le \max\br{M,f\br{x}}.
        \end{equation}
    \end{lemma}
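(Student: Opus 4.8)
The plan is to run a standard Foster--Lyapunov (drift) argument, organized as a monotone induction on \emph{truncated} return times in order to avoid treating the event $\set{\tau_{M,x}=\infty}$ separately. Write $x_n=h_{b^n_1}x$ for the trajectory of the walk, so that $\br{x_n}_{n\ge 0}$ is a Markov chain on $X$ whose one-step conditional expectation of a nonnegative Borel function $\phi$ is $P_\mu\phi$, and abbreviate $\tau:=\tau_{M,x}$. (Here $M_0$ should be read as $\tfrac{c+1}{a_0-a}$, the $b$ in the statement being a typo for $c$.) The first ingredient is the \emph{strict} form of the drift away from $X_M$: if $f\br{y}>M\ge M_0$ then, since $\br{a_0-a}f\br{y}>\br{a_0-a}M_0=c+1$,
\begin{equation}
  P_\mu f\br{y}\ \le\ a f\br{y}+c\ =\ a_0 f\br{y}-\br{a_0-a}f\br{y}+c\ \le\ a_0 f\br{y}-1 .
\end{equation}
I would also record two elementary facts for later use: $M\ge M_0>1$ (because $0<a_0-a<1$ and $c>0$), and $1+af\br{x}+c\le a_0\max\br{M,f\br{x}}$ for every $x\in X$ — which splits into the case $f\br{x}\ge M$ (apply the displayed drift) and the case $f\br{x}<M$ (replace $f\br{x}$ by $M$ and use $c+1\le\br{a_0-a}M$).

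The core of the proof is the induction. Put $\psi_N\br{x}:=\int_B a_0^{-(\tau\wedge N)}\D\beta$. Conditioning on the first letter $s_1$, and using that $\tau_{M,x}\br{b}=1$ when $h_{s_1}x\in X_M$ while $\tau_{M,x}\br{b}=1+\tau_{M,h_{s_1}x}\br{Tb}$ otherwise (together with the shift-invariance of $\beta$), one obtains the recursion
\begin{equation}
  \psi_{N+1}\br{x}\ =\ a_0^{-1}\int_E\br{\mathbb{1}_{\set{h_sx\in X_M}}+\psi_N\br{h_sx}\,\mathbb{1}_{\set{h_sx\notin X_M}}}\D\mu\br{s}.
\end{equation}
I then claim $\psi_N\le\max\br{M,f}$ on $X$ for every $N$, by induction on $N$. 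The base case $\psi_0\equiv 1\le M$ is the remark $M>1$. For the step, assuming $\psi_N\le\max\br{M,f}$ everywhere, the integrand above equals $1$ on $\set{h_sx\in X_M}$ and equals $\psi_N\br{h_sx}\le f\br{h_sx}$ on $\set{h_sx\notin X_M}$, hence is $\le 1+f\br{h_sx}$ in all cases, so
\begin{equation}
  \psi_{N+1}\br{x}\ \le\ a_0^{-1}\br{1+P_\mu f\br{x}}\ \le\ a_0^{-1}\br{1+af\br{x}+c}\ \le\ \max\br{M,f\br{x}}
\end{equation}
by the elementary inequality. Since $a_0^{-(\tau\wedge N)}\uparrow a_0^{-\tau}$ pointwise on $B$ as $N\to\infty$ (with value $+\infty$ on $\set{\tau=\infty}$), monotone convergence gives $\int_B a_0^{-\tau}\D\beta=\lim_{N}\psi_N\br{x}\le\max\br{M,f\br{x}}$, which is the assertion; note this automatically yields $\beta\br{\tau=\infty}=0$.

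I do not anticipate a serious obstacle here, but the one point that genuinely requires care is the boundary case $x\in X_M$: the drift hypothesis gives no control at the starting point, so one cannot simply run a stopped supermartingale from time $0$. Working with the truncations $\psi_N$ — equivalently, doing a one-step analysis before ever invoking the drift — is precisely what resolves this, since the factor $a_0^{-1}$ in the recursion is paid unconditionally while the drift inequality is applied only at points outside $X_M$. The remaining technicalities (finiteness of $P_\mu f$ where it is used, and integrability of each $\psi_N$) are routine, following from $f\ge 0$ Borel together with iterating $P_\mu f\le af+c$.
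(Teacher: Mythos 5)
The paper does not prove Lemma \ref{lemma:bq6.3}; it is cited verbatim from Benoist--Quint \cite{bq2}, so there is no internal proof to compare against. Your reconstruction is correct and is indeed the standard Foster--Lyapunov drift argument, in essentially the same spirit as the Benoist--Quint original. You correctly read the $b$ in the definition of $M_0$ as a typo for $c$ (in \cite{bq2} the additive constant in the drift inequality is denoted $b$, which this paper renamed $c$ but did not update in $M_0$). The induction on the truncated functionals $\psi_N(x)=\int_B a_0^{-(\tau\wedge N)}\D\beta$ is sound: the recursion is the first-step decomposition of $\beta$ as a product measure, the base case uses $M\ge M_0>1$, and the inductive step uses that on $\{h_sx\notin X_M\}$ one has $f(h_sx)>M$ and hence $\max(M,f(h_sx))=f(h_sx)$ (a small step you left implicit but which is immediate), followed by your elementary inequality $1+af(x)+c\le a_0\max(M,f(x))$, which you verified in both cases. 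Monotone convergence then upgrades the uniform bound on $\psi_N$ to the bound on $\int_B a_0^{-\tau}\D\beta$ and simultaneously yields $\beta(\tau=\infty)=0$. Your remark about why truncation is the right device — the $a_0^{-1}$ factor is paid unconditionally while the drift is invoked only off $X_M$, so the starting point $x\in X_M$ needs no special treatment — correctly identifies the one genuinely delicate point. No gaps.
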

    Functions satisfying $P_{\mu} f \le a f + c$ for some $0 < a < 1$ and $c > 0$ are called Margulis-Lyapunov functions. Lemma \ref{lemma:bq6.3} allows us to find exponentially recurrent subsets using such functions. The ``quality'' of the exponentially recurrent subset is effectively determined by the parameter $a$. In his paper \cite{khalil}, Khalil constructs a family of such functions, we will use here. We will use a simpler form of the Contraction Hypothesis (\cite{khalil} Definition 3.1) using the simplification in \cite{khalil} Remark 3.2 (1). Specifically, Khalil constructs the Margulis function, and the contraction hypothesis for our setup in section 6 (page 28) of \cite{khalil}.
    \begin{notation}
        Recall the notation of $a_t$ in \ref{subgroups}. We shall denote: 
        \begin{equation}\label{eq:def_gt}
            g_t = a_{-\frac{d \log t}{d+1}}.
        \end{equation}
    \end{notation}
    Notice that unlike $a_t$ which satisfies an additive identity: $a_t a_s = a_{t+s}$, $g_t$ satisfies a multiplicative one: $g_t g_s = g_{ts}$.
    \begin{definition}[Contraction Hypothesis]\label{def:ch}
        Let $X$ and $\mu_{\mathcal{K}}$ be as previously defined. Given a collection of functions $\mathcal{F} = \set{f_m: X \rightarrow \br{0, \infty} : m \in \mathbb{N}}$ and a real number $\alpha > 0$, we say that $\mu_{\mathcal{K}}$ satisfies the $\br{\mathcal{F}, \alpha}$\textbf{-contraction hypothesis} on $X$ if the following properties hold:
        \begin{enumerate}
            \item The functions $f_m$ are proper for all $m \in \mathbb{N}$.
            \item For every $m \in \mathbb{N}$, $f_m$ is $\operatorname{SO}\br{d+1, \mathbb{R}}$-invariant and uniformly log Lipschitz with respect to the $G$ action. That is, for every bounded neighborhood $\mathcal{O} \subseteq G$ of the identity, there exists a constant $C_{\mathcal{O}} \ge 1$ such that for every $g \in \mathcal{O}$, $x \in X$ and $m \in \mathbb{N}$:
            \begin{equation}
                C_{\mathcal{O}}^{-1} f_m\br{x} \le f_m\br{gx} \le C_{\mathcal{O}} f_m\br{x}.
            \end{equation}
            \item There exists a constant $c \ge 1$ such that the following holds: for all $m \in \mathbb{N}$, there exists $T > 0$ such that for all $y \in X$:
            \begin{equation}\label{def:ch:3}
                \int_\mathcal{K} f_m \br{g_{t_m} u_{\mathbf{x}} y} \D \mu_{\mathcal{K}}\br{\mathbf{x}} \le A_m f_m \br{y} + B_m,
            \end{equation}
            where $t_m = \kappa^m$, $A_m = c\kappa^{m\alpha}$ and $B_m = T\kappa^{-m\alpha}$.
        \end{enumerate}
    \end{definition}
    \begin{theorem}[Contraction Hypothesis for $\mu_{\mathcal{K}}$]\label{thm:ch}
        Let $0 < \varrho < 1$. There exists a family of functions $\mathcal{F}$ such that $\mu_{\mathcal{K}}$ satisfies the $\br{\mathcal{F}, \alpha}$-contraction hypothesis with:
        \begin{equation}\label{eq:def_alpha}
            \alpha = \frac{\varrho \varpi}{d+1}.
        \end{equation}
    \end{theorem}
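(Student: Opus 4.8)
The plan is to derive Theorem \ref{thm:ch} from the construction carried out by Khalil in Section 6 of \cite{khalil}, once we have (a) checked that the Bernoulli measure $\mu_{\mathcal{K}}$ satisfies the non-concentration hypotheses that construction requires, and (b) translated Khalil's normalization of the diagonal flow into the renormalization $g_t$ fixed in \eqref{eq:def_gt}.

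First I would record the quantitative non-concentration estimates enjoyed by $\mu_{\mathcal{K}}$. Since $\Phi$ is a finite, irreducible system of contracting similarities with a common ratio $\kappa$ satisfying the open set condition, $\mu_{\mathcal{K}}$ is a self-similar measure; the open set condition makes it Federer (doubling), and for each $1\le l\le d$ the definition \eqref{def:alpha}, together with compactness of $\mathcal{K}$, upgrades to a uniform bound $\sup_{\mathcal{L}\in\mathcal{A}\br{d,d-l}}\mu_{\mathcal{K}}\br{\mathcal{L}^{\br{\varepsilon}}}\ll_\delta \varepsilon^{\alpha_l\br{\mu_{\mathcal{K}}}-\delta}$ for every $\delta>0$ and all small $\varepsilon$. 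Irreducibility is exactly what rules out $\alpha_l\br{\mu_{\mathcal{K}}}=0$: if some proper affine subspace carried full-dimensional concentration of mass, a self-similarity/pigeonhole argument would produce a $\Phi$-invariant proper affine subspace, a contradiction. Hence $\alpha_l\br{\mu_{\mathcal{K}}}>0$ for all $l$ and $\varpi>0$. These properties (Federer, plus the family of affine-subspace decay bounds with exponents $\alpha_l$) are precisely the input that Khalil's Section 6 asks of a measure on $\mathbb{R}^d$.

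Next I would invoke the construction of \cite{khalil} (Section 6, using Definition 3.1 in the simplified form of Remark 3.2(1)): for a measure with the above bounds it produces a family $\mathcal{F}=\set{f_m}$ of proper, $\operatorname{SO}\br{d+1,\mathbb{R}}$-invariant, uniformly log-Lipschitz functions on $X$ and an averaging inequality of the form $\int_{\mathcal{K}} f_m\br{a_s u_{\mathbf{x}} y}\D\mu_{\mathcal{K}}\br{\mathbf{x}}\le A\, f_m\br{y}+B$, in which the contraction factor $A$ decays essentially like $e^{-s\eta}$, with $\eta$ proportional to $\min_{1\le l\le d}\alpha_l\br{\mu_{\mathcal{K}}}\br{d-l+1}=\varpi$ (the proportionality constant coming from the $e^{-s/d}$ scaling of the $d$ lower coordinates of $a_s$). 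The combination $\alpha_l\br{d-l+1}$ enters because a cuspidal excursion of $a_s u_{\mathbf{x}} y$ of ``codimension $l$'' type is controlled by the $\mu_{\mathcal{K}}$-measure of an $\varepsilon$-neighborhood of a $\br{d-l}$-dimensional affine subspace weighed against the flow's expansion on the associated $\br{d-l+1}$-dimensional block of $\mathbb{R}^{d+1}$, and one minimizes over $l$. Properness, $K$-invariance and the uniform log-Lipschitz bound are built into Khalil's functions, so properties (1) and (2) of Definition \ref{def:ch} come for free; property (3) is the averaging inequality once the time is chosen correctly.

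Finally I would carry out the bookkeeping that converts Khalil's time $s$ to ours. Taking $a_s=g_{t_m}$, i.e. $s=-\frac{d}{d+1}\log t_m=\frac{dm}{d+1}\log\br{1/\kappa}$ by \eqref{eq:def_gt} and $t_m=\kappa^m$, the factor $A\asymp e^{-s\eta}$ becomes $A\asymp\kappa^{\frac{dm}{d+1}\eta}$; tracking the proportionality constant in $\eta$, the factor $d$ cancels and one obtains $A\asymp\kappa^{m\varpi/\br{d+1}}$ up to the $\delta$-loss, which after a routine rescaling of exponents (absorbing $c$, $T$ and $\delta$) yields $A_m=c\kappa^{m\alpha}$ and $B_m=T\kappa^{-m\alpha}$ with $\alpha=\frac{\varrho\varpi}{d+1}$ for any $0<\varrho<1$, as in \eqref{eq:def_alpha}; the endpoint $\varrho=1$ is exactly what the $\delta$-loss prevents. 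I expect the main obstacle to be not the architecture of the argument but the faithful matching of our setting to the precise hypotheses and output of \cite{khalil} — in particular confirming that the non-concentration exponents Khalil's construction ``sees'' are exactly the $\alpha_l\br{\mu_{\mathcal{K}}}$ of \eqref{def:alpha} and that the governing combination is $\min_l\alpha_l\br{d-l+1}$ rather than $\alpha_1$ alone — together with carrying the normalizing constant $\frac{d}{d+1}$ correctly through the estimate; getting this combination right is precisely what produces the improvement over the $\alpha_1$-based rate of \cite{pv}.
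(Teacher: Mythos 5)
Your proposal matches the paper's treatment: the paper offers no independent proof of Theorem \ref{thm:ch}, deferring entirely to Section 6 of \cite{khalil} (via Definition 3.1 there in the simplified form of Remark 3.2(1)), which is exactly the route you take. Your verification of the non-concentration hypotheses for $\mu_{\mathcal{K}}$ and the bookkeeping converting Khalil's time parameter to the normalization $g_{t_m}$ of \eqref{eq:def_gt} supply more detail than the paper itself does, and are consistent with the stated conclusion $\alpha = \frac{\varrho\varpi}{d+1}$ for $0<\varrho<1$.
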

    We would like to use this result, together with the Benoist-Quint machinery, in order to prove the existence of an exponentially recurrent set, with favorable parameters. This is achieved in Corollary \ref{cor:from_lemma_63}. 
    To do this, we first need a translation between the trajectory of the diagonal flow and the random walk, coming from an important lemma from \cite{sw}, which is Lemma \ref{lemma:bdd_dist} in this paper.
    \begin{definition}\label{def:bdd_dist}
        We say a trajectory $\br{x_n}_{n \in \mathbb{N}} \subseteq X$ \textbf{remains within bounded distance} of a trajectory $\br{y_n}_{n \in \mathbb{N}} \subseteq X$ if there exists some bounded neighborhood of the identity, $\mathcal{O} \subseteq G$, such that for each $n \in \mathbb{N}$ we can find some $g \in \mathcal{O}$ such that:
        \begin{equation}
            x_n = g y_n.
        \end{equation}
        We shall say this bound is \textbf{independent} of some parameters if we can find a common $\mathcal{O} \subseteq G$ that satisfies the requirements for each choice of parameters.
    \end{definition}
    
    \begin{lemma}\label{lemma:bdd_dist}
        For all $b \in B$, the trajectory of the diagonal flow $\br{a_{t} u_{\pi\br{b}} x_0}_{t > 0}$ remains bounded distance from the random walk trajectory $\br{h_{b_1^n} x_0}_{n \in \mathbb{N}}$. This bound is independent of $n$, $x_0$ and $b$. Specifically, $a_{t_n} u_{\pi\br{b}} x_0$ remains bounded distance from $h_{b_1^n} x_0$
        where $a_{t_n}$ is the projection onto the diagonal group element of $h_{b_1^n}$.
    \end{lemma}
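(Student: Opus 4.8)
The plan is to reduce the statement to the claim that a single group element stays in one fixed bounded set, and then to verify that by an explicit computation in the similarity–group picture together with the exponential convergence of the coding map.

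The first step is to identify, for fixed $b \in B$ and $n$, the element $h_{b_1^n} = h_{b_n}\cdots h_{b_1}$ under the identification of $P$ with the similarities of $\mathbb{R}^d$ via $\rho$. Since $\rho$ is a homomorphism and $h_s = \phi_s^{-1}$, we get $\rho\br{h_{b_1^n}} = \phi_{b_n}^{-1}\circ\cdots\circ\phi_{b_1}^{-1} = \psi_n^{-1}$, where $\psi_n := \phi_{b_1}\circ\cdots\circ\phi_{b_n}$ is a similarity with contraction ratio $\kappa^n$ and orthogonal part $O^{(n)} = O_{b_n}\cdots O_{b_1} \in O_d$. Thus the linear part of $\rho\br{h_{b_1^n}}$ expands by $\kappa^{-n}$, and comparing with $\rho\br{a_t}\br{\beta} = e^{t(d+1)/d}\beta$ shows that the $A$-component of $h_{b_1^n}$ is $a_{t_n}$ with $t_n = \tfrac{d}{d+1}\log\br{\kappa^{-n}}$, equivalently $a_{t_n} = g_{\kappa^n}$ — this is the $a_{t_n}$ in the statement. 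I would then set $g_n := a_{t_n} u_{\pi\br{b}} h_{b_1^n}^{-1} \in P$, so that $a_{t_n} u_{\pi\br{b}} x_0 = g_n \cdot h_{b_1^n} x_0$ for every $x_0 \in X$; hence the ``specifically'' part of the lemma reduces to showing that $\set{g_n : n \in \mathbb{N},\ b \in B}$ lies in one fixed relatively compact subset $\mathcal{O}$ of $G$, which makes the bound manifestly independent of $n$, $b$ and $x_0$.

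Next I would compute $\rho\br{g_n} = \rho\br{a_{t_n}}\circ\rho\br{u_{\pi\br{b}}}\circ\psi_n$, which is the map $\beta \mapsto \kappa^{-n}\br{\psi_n\br{\beta} - \pi\br{b}}$; writing $\psi_n\br{\beta} = \psi_n\br{\alpha_0} + \kappa^n\br{\beta - \alpha_0}O^{(n)}$ this becomes $\beta \mapsto \beta O^{(n)} + v_n$ with $v_n = \kappa^{-n}\br{\psi_n\br{\alpha_0} - \pi\br{b}} - \alpha_0 O^{(n)}$. The key point is that the $\kappa^{-n}$ expansion of $\psi_n^{-1}$ is cancelled exactly by $a_{t_n}$, so $\rho\br{g_n}$ is an isometry with orthogonal part $O^{(n)}$ in the compact group $O_d$. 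To bound $v_n$ I would use that $\pi$ does not depend on the base point, so we may take $\alpha_0 \in \mathcal{K}$ (for general $\alpha_0$, replace $\mathcal{K}$ below by a compact $Z \ni \alpha_0$ with $\phi_s\br{Z} \subseteq Z$ for all $s$), together with the identity $\pi\br{b} = \psi_n\br{\pi\br{T^n b}}$ and the fact that $\psi_n$ contracts by $\kappa^n$: then $\norm{\psi_n\br{\alpha_0} - \pi\br{b}} = \kappa^n\norm{\alpha_0 - \pi\br{T^n b}} \le \kappa^n \operatorname{diam}\br{\mathcal{K}}$, whence $\norm{v_n} \le \operatorname{diam}\br{\mathcal{K}} + \norm{\alpha_0}$, uniformly in $n$ and $b$. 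Hence $\rho\br{g_n}$ ranges over a bounded family of isometries of $\mathbb{R}^d$, i.e.\ the $g_n$ lie in a relatively compact subset of $KU \subseteq G$, and I take $\mathcal{O}$ to be a bounded neighborhood of the identity containing its closure. Finally, the continuous-time statement follows from this because consecutive $t_n$ differ by the fixed amount $\tfrac{d}{d+1}\log\br{1/\kappa}$: for $t \in [t_n, t_{n+1})$ one has $a_t u_{\pi\br{b}} x_0 = a_{t-t_n} g_n \cdot h_{b_1^n} x_0$, and $a_{t-t_n} g_n$ stays in the still-bounded set $\set{a_s : 0 \le s \le \tfrac{d}{d+1}\log\br{1/\kappa}}\cdot\overline{\mathcal{O}}$.

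I expect the only genuine obstacle to be bookkeeping: getting the order of composition in $\rho$ right and correctly reading off the $A$-component of $h_{b_1^n}$, so that the expansion rate is seen to be exactly $\kappa^{-n}$. Once that is in place, the boundedness of $v_n$ is just the standard exponential-convergence estimate for IFS coding maps, and the whole statement is the common-contraction-ratio specialization of the corresponding lemma of Simmons--Weiss in \cite{sw} (Chapter 10.1).
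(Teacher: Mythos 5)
Your proof is correct and follows essentially the same route as the paper's: both exhibit the correction element $g_n = a_{t_n}u_{\pi\br{b}}h_{b_1^n}^{-1}$ as an element of $KU$ whose rotation part lies in the compact group $K$ and whose translation part is controlled by $\pi\br{T^n b} \in \mathcal{K}$ (hence bounded by compactness of $\mathcal{K}$), and both deduce the continuous-time statement from the bounded gaps $t_{n+1}-t_n = \tfrac{d}{d+1}\log\br{1/\kappa}$. The only difference is presentational: the paper verifies the identity $h_{b_1^n}=u_{-\pi\br{T^n b}}a_{t_n}k_n u_{\pi\br{b}}$ by matching $AK$-projections and the action on a single point, whereas you compute $\rho\br{g_n}$ explicitly in the similarity picture.
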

    This Lemma can be deduced from the proof of Lemma 11.3 in \cite{sw}. The proof can be found in Chapter \ref{appendix}, together with the notations that translate IFSs into Random Walks. We would like to continue by calculating $t_n$ for this ``translation''. The IFS is composed of elements of the form:
    \begin{equation}
        \phi_{O, b}\br{v} = \kappa\br{v O + b},
    \end{equation}
    where $b \in \mathbb{R}^d$, $O \in \operatorname{SO}\br{d, \mathbb{R}}$ and $\kappa \in \mathbb{R}$. Taking such $\phi = \phi_{O, b}$ we would like calculate the projection onto the diagonal group of the corresponding element $h \in H$, which we'll denote by $a_t$ (as in \cite{sw} chapter 10.1, where it is shown that the projection is also a group homomorphism). This is determined by $\kappa$, so it is identical for all the elements of $H$. As $h$ corresponds to the inverse of $\phi$ (Chapter \ref{ifs_translation}), the diagonal element of $\phi$ is $a_{-t}$. We therefore have that: $\kappa = e^{-t/M - t/N}$, where in our case $M=1, N=d$. So:
    \begin{equation}
        \kappa = e^{-t/M-t/N} = e^{-\frac{d+1}{d}t} \quad \Rightarrow \quad \log \kappa = -\frac{d+1}{d}t \quad \Rightarrow \quad t = -\frac{d\log \kappa}{d+1}.
    \end{equation}
    As the projection to the diagonal group is the product of the projections of the $n$ acting elements, each contributing $a_t$, the total projection is $a_{nt}$, and so:
    \begin{equation}\label{eq:tn_formula}
        t_n = nt = -n\frac{d\log \kappa}{d+1}.
    \end{equation}
    Notice that this projection can be written as $g_{\kappa^n}$ by (\ref{eq:def_gt}).
    
    \begin{corollary}\label{cor:C}
        In the same notation above, and $\alpha$ as in (\ref{eq:def_alpha}) we have that there exists some $C \ge 1$ such that for all $m \in \mathbb{N}$ and for all $y \in X$:
        \begin{equation}
            P^m f_m \br{y} \le C A_m f_m\br{y} + C B_m.
        \end{equation}
    \end{corollary}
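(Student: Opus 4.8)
The plan is to unfold the iterated averaging operator as an integral over $B$, then use Lemma~\ref{lemma:bdd_dist} to trade each random-walk point $h_{b^m_1}y$ for the corresponding point $g_{t_m}u_{\pi\br{b}}y$ of the diagonal-flow trajectory at the cost of a bounded error, absorb that error using the uniform log-Lipschitz property of the $f_m$, transport the resulting integral from $B$ to $\mathcal{K}$ via the coding map, and finally invoke the contraction inequality $(\ref{def:ch:3})$ provided by Theorem~\ref{thm:ch}.

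Concretely, I would first note that, since $h_{b^m_1} = h_{b_m}\cdots h_{b_1}$ and $\beta = \mu^{\otimes\mathbb{N}}$, one has $P^m f_m\br{y} = \int_B f_m\br{h_{b^m_1} y}\,\D\beta\br{b}$. Next, Lemma~\ref{lemma:bdd_dist}, applied with base point $x_0 = y$ (which is legitimate because the bound there is independent of the base point, of $b$, and of $n$), yields a fixed bounded neighbourhood $\mathcal{O}\subseteq G$ of the identity such that for every $b\in B$ we can write $h_{b^m_1} y = g_b\, a_{t_m} u_{\pi\br{b}} y$ for some $g_b\in\mathcal{O}$, where $a_{t_m}$ is the diagonal projection of $h_{b^m_1}$. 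By the computation $(\ref{eq:tn_formula})$ — which is exactly where the common contraction ratio of $\Phi$ is used — this diagonal element is the same for all $b$ and equals $g_{\kappa^m}$, i.e. the element $g_{t_m}$ with $t_m = \kappa^m$ appearing in Definition~\ref{def:ch}. Item~2 of Definition~\ref{def:ch} then supplies a constant $C_{\mathcal{O}}\ge 1$ — depending only on $\mathcal{O}$, hence in particular independent of $m$ — with $f_m\br{h_{b^m_1} y}\le C_{\mathcal{O}}\, f_m\br{g_{t_m} u_{\pi\br{b}} y}$ for every $b$.

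Integrating this last inequality over $b\in B$ and using that $\pi$ pushes $\beta$ forward to $\mu_{\mathcal{K}}$ (Definition~\ref{def:bernoulli}) gives $P^m f_m\br{y}\le C_{\mathcal{O}}\int_{\mathcal{K}} f_m\br{g_{t_m} u_{\mathbf{x}} y}\,\D\mu_{\mathcal{K}}\br{\mathbf{x}}$, and $(\ref{def:ch:3})$ bounds the last integral by $A_m f_m\br{y} + B_m$; taking $C = C_{\mathcal{O}}$ then finishes the argument. The proof is essentially bookkeeping, and the only point demanding genuine care is the uniformity in $m$: one must check that neither the neighbourhood $\mathcal{O}$ coming out of Lemma~\ref{lemma:bdd_dist} nor, consequently, the log-Lipschitz constant $C_{\mathcal{O}}$ depends on $m$, and that the diagonal part of $h_{b^m_1}$ is genuinely $b$-independent — both of which hold precisely because the IFS has a single common contraction ratio $\kappa$ and because the constant in item~2 of the contraction hypothesis depends only on the ambient neighbourhood, not on the index of the function.
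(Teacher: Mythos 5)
Your proposal is correct and follows essentially the same route as the paper: unfold $P^m$ as an integral over $B$, invoke Lemma~\ref{lemma:bdd_dist} together with the uniform log-Lipschitz property (item~2 of Definition~\ref{def:ch}) to replace $f_m(h_{b^m_1}y)$ by $C_{\mathcal O}f_m(g_{\kappa^m}u_{\pi(b)}y)$, push forward from $B$ to $\mathcal K$, and finish with the contraction inequality~(\ref{def:ch:3}). The only (cosmetic) difference is ordering: the paper first rewrites the integral over $\mathcal K$ and then applies the bounded-distance lemma to the integrand, whereas you replace the integrand first and then push forward — your order is if anything slightly cleaner, since after the replacement the integrand genuinely factors through $\pi$, making the change of variables immediate.
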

    \begin{proof}
        Clearly:
        \begin{equation}
            P^m f_m \br{y} = \int_{B} f_m\br{h_{b^n_1} y} \D \beta\br{b}.
        \end{equation}
        Denote by $b^{\mathbf{x}}_n$ the prefix of length $n$ of the path corresponding to $\mathbf{x}$, and notice that for $\mathbf{x} = \pi\br{b}$ we have the identity: $b^n_1 = b^{\mathbf{\pi\br{b}}}_n$. As $\mathcal{K}$ is the image of $B$ under the coding map, and $\mu_{\mathcal{K}}$ is the pushforward of $\beta$ (Definitions \ref{def:coding_map} and \ref{def:bernoulli}) we have that:
        \begin{equation}
            P^m f_m \br{y} = \int_{B} f_m\br{h_{b^n_1} y} \D \beta\br{b} = \int_{\mathcal{K}} f_m\br{h_{b^{\mathbf{x}}_m} y} \D \mu_{\mathcal{K}}\br{\mathbf{x}}.
        \end{equation}
        By Lemma \ref{lemma:bdd_dist}, there is some compact neighborhood of the identity, $\mathcal{O} \subseteq G$ (independent of $b$) such for any $n$ and $y$ there exists some $g \in \mathcal{O}$ such that $h_{b^{\mathbf{x}}_n} y = g a_{t_n} u_{\mathbf{x}} y$, and as $a_{t_n} = g_{\kappa^n}$, we have the integrand in Formula (\ref{def:ch:3}). By (2) of the Definition \ref{def:ch}, $f_m\br{h_{b^{\mathbf{x}}_n} y} \le C_{\mathcal{O}} f_m\br{g_{\kappa^n} u_{\mathbf{x}} y}$, and so:
        \begin{multline}
            P^m f_m \br{y} = \int_{\mathcal{K}} f_m\br{h_{b^{\mathbf{x}}_m} y} \D \mu_{\mathcal{K}}\br{\mathbf{x}} \le \int_{\mathcal{K}} C_{\mathcal{O}} f_m\br{g_{\kappa^m} u_{\mathbf{x}} y} \D \mu_{\mathcal{K}}\br{\mathbf{x}} = \\ = C_{\mathcal{O}} \int_\mathcal{K} f_m \br{g_{\kappa^m} u_{\mathbf{x}} y} \D \mu_{\mathcal{K}}\br{\mathbf{x}} \le C_{\mathcal{O}} \br{A_m f_m \br{y} + B_m} = C_{\mathcal{O}}A_m f_m \br{y} + C_{\mathcal{O}}B_m,
        \end{multline}
        as required.
    \end{proof}
    Now that we see $f_m$ are in the form required by Lemma \ref{lemma:bq6.3}, but we still have one more requirement before we can use the lemma:
    \begin{corollary}\label{cor:m_0}
        Let $C$ be as in Corollary \ref{cor:C}. There exists a large enough $m_0$, such that for any $m \ge m_0$ $CA_m < 1$.
    \end{corollary}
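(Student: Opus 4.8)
The plan is simply to observe that $A_m$ decays geometrically in $m$, so that $CA_m$ is eventually smaller than $1$. Recall from Definition~\ref{def:ch} that $A_m = c\kappa^{m\alpha}$, where $c \ge 1$ and $0 < \kappa < 1$ are fixed, and where $\alpha = \varrho\varpi/(d+1) > 0$ by Theorem~\ref{thm:ch}. Hence $CA_m = Cc\,(\kappa^{\alpha})^m$ is a geometric sequence with ratio $\kappa^{\alpha} \in (0,1)$, so $CA_m \to 0$ as $m \to \infty$; in particular $CA_m < 1$ for all sufficiently large $m$. Concretely, one may take any integer $m_0 > \log(Cc)/(-\alpha\log\kappa)$.

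First I would fix such an $m_0$. Then, since the map $t \mapsto \kappa^{t\alpha}$ is decreasing on $[0,\infty)$ (because $0 < \kappa < 1$ and $\alpha > 0$), the sequence $\br{A_m}_{m \in \mathbb{N}}$ is nonincreasing, and therefore $CA_m \le CA_{m_0} < 1$ for every $m \ge m_0$, which is exactly the assertion. There is no genuine obstacle in this argument; the only fact that must be in place is the strict positivity of $\alpha$, which is built into the conclusion of Theorem~\ref{thm:ch} (and ultimately rests on $\varpi > 0$, consistent with the hypothesis $0 < \alpha < \varpi$ of Theorem~\ref{theorem:main}).
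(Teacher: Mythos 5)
Your argument is correct and is essentially identical to the paper's own proof: both observe that $A_m = c\kappa^{m\alpha}$ with $\kappa < 1$ and $\alpha > 0$, so $A_m$ decreases to $0$ and $CA_m < 1$ for all sufficiently large $m$. Your explicit choice of $m_0$ is a harmless extra detail.
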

    \begin{proof}
        As by Definition \ref{def:ch} part (3), $A_m = c\kappa^{m\alpha}$, and as $\kappa < 1$ and $\alpha > 0$, $A_m$ is monotone decreasing in $m$ (with limit 0 in infinity), so for large enough values, $CA_m < 1$.
    \end{proof}
    We can now apply Lemma \ref{lemma:bq6.3}:
    \begin{corollary}\label{cor:from_lemma_63}
        Let $C$ be as in Corollary \ref{cor:C} and let $m_0$ be as in Corollary \ref{cor:m_0}. For every $m \ge m_0$, $\delta \in \br{0, -\log \br{CA_m}}$ there exists a compact $Y \subseteq X$ such that:
        \begin{equation}
            \sup\limits_{x \in Y} \int_B e^{\frac{\delta}{m} \tau\br{b}} \D \beta \br{b} < \infty,
        \end{equation}
        where $\tau = \tau_{Y, x}$.
    \end{corollary}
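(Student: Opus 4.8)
The plan is to apply the Benoist--Quint Lemma \ref{lemma:bq6.3} not to the one-step operator $P$, but to the $m$-step operator $P^m$, which is itself the one-step averaging operator of a random walk on $X$: namely the walk driven by the pushforward $\nu_m$ of $\mu^{\otimes m}$ under $\br{s_1,\dots,s_m}\mapsto h_{s_m}\cdots h_{s_1}$. Fix $m \ge m_0$ and $\delta \in \br{0,\,-\log\br{CA_m}}$. By Corollary \ref{cor:C}, $f_m$ satisfies $P^m f_m \le \br{CA_m} f_m + CB_m$, and by Corollary \ref{cor:m_0} we have $0 < CA_m < 1$, so Lemma \ref{lemma:bq6.3} applies (with $\mu$ replaced by $\nu_m$) taking $a = CA_m$, the additive constant $CB_m$, and $a_0 = e^{-\delta}$; the condition $a < a_0 < 1$ holds precisely because of the range imposed on $\delta$. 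Choosing $M \ge M_0 = \frac{CB_m + 1}{e^{-\delta} - CA_m}$ and setting $Y = X_M = \set{x : f_m\br{x} \le M}$, which is compact since $f_m$ is proper (Definition \ref{def:ch}(1)), the lemma yields, for every $x \in X$,
\[
\int_B \br{e^{-\delta}}^{-\tau^{\br{m}}_{Y,x}\br{b}} \D\beta\br{b} \le \max\br{M,\, f_m\br{x}},
\]
where $\tau^{\br{m}}_{Y,x}$ is the return time of the $m$-step walk, i.e. $\tau^{\br{m}}_{Y,x}\br{b} = \inf\set{k \ge 1 : h_{b^{km}_1} x \in Y}$, after identifying the path space $\br{E^m}^{\mathbb{N}}$ of $\nu_m$ with $\br{B,\beta}$ via concatenation of blocks (which is measure preserving).

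It then remains to pass from $\tau^{\br{m}}$ back to the genuine return time $\tau = \tau_{Y,x}$ of the original walk. Since any visit of the $m$-step walk to $Y$ is in particular a visit of the original walk to $Y$, we have the pointwise inequality $\tau_{Y,x}\br{b} \le m\,\tau^{\br{m}}_{Y,x}\br{b}$, hence $e^{\frac{\delta}{m}\tau_{Y,x}\br{b}} \le e^{\delta\,\tau^{\br{m}}_{Y,x}\br{b}} = \br{e^{-\delta}}^{-\tau^{\br{m}}_{Y,x}\br{b}}$. Integrating against $\beta$ and invoking the display above gives $\int_B e^{\frac{\delta}{m}\tau\br{b}}\D\beta\br{b} \le \max\br{M, f_m\br{x}}$ for every $x \in X$. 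Restricting to $x \in Y$, where by definition $f_m\br{x} \le M$, the right-hand side is just $M$, and taking the supremum over $x \in Y$ completes the proof.

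The only genuinely delicate point is the bookkeeping in the first step: one must verify that $P^m$ is the transition operator of an honest random walk on $X$, so that Lemma \ref{lemma:bq6.3} applies verbatim with $\mu$ replaced by $\nu_m$, and that under the block identification of $\br{E^m}^{\mathbb{N}}$ with $E^{\mathbb{N}}$ the $m$-step return time is exactly $\inf\set{k\ge 1 : h_{b^{km}_1}x\in Y}$. This is what makes the inequality $\tau_{Y,x}\le m\,\tau^{\br{m}}_{Y,x}$ literally true and, correspondingly, explains why the exponent in the statement is $\delta/m$ rather than $\delta$: the factor $m$ from the return-time comparison is exactly cancelled. Beyond this, the argument is a direct substitution of Corollaries \ref{cor:C} and \ref{cor:m_0} into Lemma \ref{lemma:bq6.3}.
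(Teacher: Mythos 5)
Your proof is correct and follows essentially the same route as the paper: apply Lemma \ref{lemma:bq6.3} to the $m$-step walk (the walk driven by the pushforward of $\mu^{\otimes m}$) using the drift estimate $P^m f_m \le CA_m f_m + CB_m$ from Corollary \ref{cor:C}, take $Y = X_M$ compact by properness of $f_m$, and convert the resulting bound on the $m$-step return time to a bound on the true return time via $\tau_{Y,x} \le m\,\tau^{(m)}_{Y,x}$, which explains the $\delta/m$ in the exponent. Your explicit choice $a_0 = e^{-\delta}$ is also the correct reading of the paper's substitution, which as written (``$\delta = \log a_0$'') has a sign typo and should be $\delta = -\log a_0$.
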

    \begin{proof}
        \par Taking $m \ge m_0$ we denote by $\tau_m = \tau_{Y, x, m}$ the first return time to $Y \subseteq X$, along the sampled times (every m steps). Using Lemma \ref{lemma:bq6.3} we have that for any $CA_m = a < a_0 < 1$ there exists some $M_m = M_m\br{a_0} > 0$ for $f_m$ such that $X_{M_m}$ is exponentially recurrent, as:
        \begin{equation}
            \sup\limits_{x \in X_{M_m}} \int_B a_0^{-\tau_{x, m}\br{b}}\D \beta\br{b} \le \sup\limits_{x \in X_{M_m}} \max\br{M_m,f_m\br{x}} \le M_m,
        \end{equation}
        denoting: $\tau_{x, m} := \tau_{X_{M_m},x,m}$.
        \par Notice, that as $f_m$ is proper, $X_{M_m}$ is compact. Changing variables to $\delta = \log{a_0}$ we have that for any $\delta \in \br{0, -\log \br{CA_m}}$ exists a large enough $M = M\br{\delta} = M_m\br{a_0}$, so that taking $Y = X_M$ gives us:
        \begin{equation}
            \operatorname{sup}\limits_{x \in Y} \int_B e^{\delta \tau_{Y, x, m}\br{b}} \D \beta \br{b} \le M < \infty.
        \end{equation}
        As $\tau_{Y, x, m}$ is the first return time to $Y$ sampled every m steps it corresponds to time $m\tau_{Y, x, m}$ along the full random walk. We therefore have $\tau_{Y,x} \le m\tau_{Y, x, m}$. Substituting gives the required result.
    \end{proof}
}
\section{Excursion Bounds}\label{sec:excursion_bounds}
{
    Using the family of exponentially recurrent subsets from Corollary \ref{cor:from_lemma_63}, we shall now give bounds on the length of the excursions from these sets. These bounds are achieved in Lemma \ref{lemma:excursion_bound} and Corollary \ref{cor:excursion_bound}.
    \begin{definition}
        For $n \ge 1$ define the $n$-th excursion length by: $\sigma_{Y, x}^n = \tau_{Y, x}^{n+1} - \tau_{Y, x}^{n}$, and: $\sigma_{Y, x}^0 = \tau_{Y, x}^{1}$. We can view $\sigma_{Y, x}^n$ as a random variable that depends on the trajectory.
    \end{definition}
    \begin{definition}
        We shorthand $\tau\br{x,n} := \tau_{Y, x}^n$ and $h_{b,k} := h_{b_1^k}$ denote:
        \begin{equation}
            y_n\br{x, b} := h_{b, \tau\br{x,n}} x \quad\quad \br{= h_{b_1^{\tau_{Y, x}^n}} x},
        \end{equation}
        i.e. the $n$-th return point in $Y$.
    \end{definition}
    We continue by proving a Lemma \& Corollary inspired by \cite{bq3} - Lemma 3.5.
    \begin{lemma}\label{lemma:excursion_bound}
        Fix $x \in X$ and $r: \mathbb{N} \rightarrow \segco{0, \infty}$. Taking $Y$ and $\delta$ as in  Corollary \ref{cor:from_lemma_63} and denoting $\theta = \operatorname{sup}\limits_{x \in Y} \int_B e^{\frac{\delta}{m} \tau_{Y, x}\br{b}}$, we have that for all $n \in \mathbb{N}$ :
        \begin{equation}
            \beta\br{\sigma_{Y, x}^n \ge r\br{n}} \le e^{-\frac{\delta}{m} r\br{n}} \theta.
        \end{equation}
    \end{lemma}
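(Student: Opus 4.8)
The plan is to reduce the statement to a one-step exponential Chebyshev inequality by exploiting the cocycle (Markov) structure of the walk together with the fact that $\tau_{Y,x}^n$ is a stopping time. First I would record the cocycle identity $h_{b_1^{k+j}} = h_{\br{T^k b}_1^j}\, h_{b_1^k}$, which is immediate from the definitions of the products $h_{b_1^n} = h_{s_n}\cdots h_{s_1}$ and of the left shift $T$. Applying it with $k = \tau\br{x,n}\br{b}$ and recalling that $h_{b, k} x = y_n\br{x,b}$, we get for every $j \ge 1$ that $h_{b, k+j} x = h_{\br{T^k b}_1^j}\, y_n\br{x,b}$; hence the first index after $k$ at which the trajectory re-enters $Y$ is exactly $k + \tau_{Y,\, y_n\br{x,b}}\br{T^k b}$, so that
\begin{equation*}
\sigma_{Y,x}^n\br{b} = \tau_{Y,\, y_n\br{x,b}}\br{T^{\tau\br{x,n}\br{b}} b}.
\end{equation*}

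Next I would use that $\tau\br{x,n}$ is a stopping time with respect to the filtration generated by the coordinate projections $b \mapsto b_k$, together with the fact that $\beta = \mu^{\otimes \mathbb{N}}$ is a product measure and $E$ is finite, so the prefixes $b_1^k$ take only finitely many values. Conditioning on the event $\set{\tau\br{x,n} = k,\ b_1^k = w}$, the post-$k$ coordinates $T^k b$ are independent of that event and again distributed according to $\beta$, while the return point $y_n\br{x,b}$ equals the deterministic value $h_w x$, which by construction lies in $Y$. Therefore
\begin{equation*}
\beta\br{\sigma_{Y,x}^n \ge r\br{n}} = \sum_{k \ge 1} \sum_{w}\, \beta\br{\tau\br{x,n} = k,\ b_1^k = w}\cdot \beta\br{\tau_{Y,\, h_w x} \ge r\br{n}},
\end{equation*}
the inner sum ranging over the finitely many admissible prefixes $w$ of length $k$ with $h_w x \in Y$.

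The elementary core is then: for any fixed $y \in Y$, apply Markov's inequality to the nonnegative variable $e^{\frac{\delta}{m}\tau_{Y,y}\br{b}}$,
\begin{equation*}
\beta\br{\tau_{Y,y} \ge r\br{n}} = \beta\br{e^{\frac{\delta}{m}\tau_{Y,y}} \ge e^{\frac{\delta}{m} r\br{n}}} \le e^{-\frac{\delta}{m} r\br{n}} \int_B e^{\frac{\delta}{m}\tau_{Y,y}\br{b}}\, \D\beta\br{b} \le e^{-\frac{\delta}{m} r\br{n}}\, \theta,
\end{equation*}
the last step being the definition of $\theta$, finite by Corollary \ref{cor:from_lemma_63}. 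Plugging this uniform bound into the display above and using $\sum_{k,w} \beta\br{\tau\br{x,n} = k,\ b_1^k = w} = \beta\br{\tau\br{x,n} < \infty} \le 1$ gives $\beta\br{\sigma_{Y,x}^n \ge r\br{n}} \le e^{-\frac{\delta}{m} r\br{n}}\, \theta$. (One also records that the same Corollary forces $\tau\br{x,n} < \infty$ $\beta$-a.s., so the event $\set{\tau\br{x,n} = \infty}$, where $\sigma^n$ is not defined, is negligible.)

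The routine ingredients are the cocycle identity and Markov's inequality; the step that requires care is the middle one, namely the strong Markov decomposition — justifying that conditioning on the stopping time $\tau\br{x,n}$ and the prefix it depends on turns $T^{\tau\br{x,n}} b$ into a fresh $\beta$-distributed sequence started from the return point $y_n \in Y$. With $B = E^{\mathbb{N}}$ and $E$ finite this is a standard consequence of the product structure of $\beta$ once one reduces to cylinder events, but it is the only place where one must be attentive to measurability and to the fact that every realized return point lies in $Y$, exactly the set over which $\theta$ is the supremum, so that the uniform bound from the last step applies.
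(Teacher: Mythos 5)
Your proposal is correct and follows essentially the same route as the paper's proof: identify $\sigma_{Y,x}^n$ as the first return time $\tau_{Y,\,y_n\br{x,b}}$ along the shifted sequence, exploit the product structure of $\beta$ to decouple the return point from the post-return coordinates, and finish with the exponential Markov/Chebyshev inequality and the uniform bound $\theta$ over $Y$. The only difference is cosmetic — the paper applies Chebyshev to $\sigma^n$ first and then factors the resulting integral, whereas you condition on the stopping time first and apply Markov to each piece — and your explicit strong-Markov decomposition over $\set{\tau\br{x,n}=k,\ b_1^k=w}$ is in fact a more careful justification of the independence claim the paper states somewhat loosely.
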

    \begin{proof}
        First we use Chebyshev's inequality:
        \begin{equation}
            \beta\br{\sigma_{Y, x}^n \ge r\br{n}} \le 
            e^{-\frac{\delta}{m} r\br{n}} \int_B e^{\frac{\delta}{m} \sigma_{Y, x}^n\br{b}} \D \beta\br{b}.
        \end{equation}    
        Notice that $\sigma_{Y, x}^n = \tau_{Y, y_{n}\br{x, b}}\br{T^{\tau_{Y, x}^{n}} b}$, i.e. the first return time from the previous point in $Y$, along the shifted sequence $b$. As $y_{n}\br{x, b}$ depends only on $\br{b_1, ..., b_{\tau_{Y, x}^{n}}}$ and $T^{\tau_{Y, x}^{n}} b = \br{b_{\tau_{Y, x}^{n}+1}, ...}$, these are independent, and so we can write:
        \begin{equation}
            \int_B e^{\frac{\delta}{m} \sigma_{Y, x}^n \br{b}} \D \beta\br{b} = 
            \int_B e^{\frac{\delta}{m} \tau_{Y, y_{n}\br{x, b}}\br{T^{\tau_{Y, x}^{n}} b}} \D \beta\br{b} = 
            \int_B \int_B e^{\frac{\delta}{m} \tau_{Y, y_{n}\br{x, b}}\br{b'}} \D \beta\br{b'} \D \beta\br{b}.
        \end{equation}
        As $y_{n}\br{x, b} \in Y$, we have that $\int_B e^{\frac{\delta}{m} \tau_{Y, y_{n}\br{x, b}}\br{b'}} \D \beta\br{b'} \le \theta$, and so:
        \begin{equation}
            \int_B e^{\frac{\delta}{m} \sigma_{Y, x}^n \br{b}} \D \beta\br{b} = 
            \int_B \int_B e^{\frac{\delta}{m} \tau_{Y, y_{n}\br{x, b}}\br{b'}} \D \beta\br{b'} \D \beta\br{b} \le 
            \int_B \theta \D \beta\br{b} = \theta.
        \end{equation}
        Finally we get:
        \begin{equation}
            \beta\br{\sigma_{Y, x}^n \ge r\br{n}} \le 
            e^{-\frac{\delta}{m} r\br{n}} \int_B e^{\frac{\delta}{m} \sigma_{Y, x}^n\br{b}} \D \beta\br{b} \le 
            e^{-\frac{\delta}{m} r\br{n}} \theta,
        \end{equation}
        as required.
    \end{proof}
    \begin{corollary}\label{cor:excursion_bound}
        In the above notation, if $\sum_{n=1}^{\infty} e^{-\frac{\delta}{m} r\br{n}} < \infty$, then for $\beta$-a.e. $b \in B$ there exists a large enough $n_0 = n_0\br{x,b}$ such that for all $n \ge n_0$:
        \begin{equation}\label{eq:cor:excursion_bound}
            \sigma_{Y, x}^n < r\br{n}.
        \end{equation}
    \end{corollary}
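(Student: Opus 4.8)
The statement is exactly the conclusion of the first Borel--Cantelli lemma applied to the tail events $A_n := \set{b \in B : \sigma_{Y,x}^n\br{b} \ge r\br{n}}$, so the plan is to verify the summability hypothesis of Borel--Cantelli and then read off the conclusion. The work has essentially been done in Lemma \ref{lemma:excursion_bound}; what remains is bookkeeping.

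First I would fix $x \in X$ and the data $Y$, $\delta$, $m$, $\theta$ as in the statement, so that Lemma \ref{lemma:excursion_bound} applies and gives, for every $n \in \mathbb{N}$, the bound
\begin{equation}
    \beta\br{A_n} = \beta\br{\sigma_{Y,x}^n \ge r\br{n}} \le e^{-\frac{\delta}{m} r\br{n}} \theta.
\end{equation}
Summing over $n$ and using the hypothesis $\sum_{n=1}^{\infty} e^{-\frac{\delta}{m} r\br{n}} < \infty$ together with $\theta < \infty$ (which is guaranteed by Corollary \ref{cor:from_lemma_63}), I get $\sum_{n=1}^\infty \beta\br{A_n} \le \theta \sum_{n=1}^\infty e^{-\frac{\delta}{m} r\br{n}} < \infty$.

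Next I would invoke the first Borel--Cantelli lemma for the probability space $\br{B, \beta}$: since $\sum_n \beta\br{A_n} < \infty$, the set $\limsup_n A_n = \bigcap_{N} \bigcup_{n \ge N} A_n$ of points lying in infinitely many $A_n$ has $\beta$-measure zero. Hence for $\beta$-a.e. $b \in B$ there are only finitely many $n$ with $b \in A_n$, i.e. only finitely many $n$ with $\sigma_{Y,x}^n\br{b} \ge r\br{n}$. Defining $n_0 = n_0\br{x,b}$ to be one more than the largest such index (and $n_0 = 1$ if there is no such index) yields $\sigma_{Y,x}^n\br{b} < r\br{n}$ for all $n \ge n_0$, which is \br{\ref{eq:cor:excursion_bound}}.

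There is no real obstacle here; the only point requiring a word of care is that the events $A_n$ need not be independent, but the first Borel--Cantelli lemma does not require independence, so the argument goes through verbatim. I would simply remark that $n_0$ depends on the trajectory $b$ (and on the fixed basepoint $x$), as reflected in the notation.
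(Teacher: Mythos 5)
Your proof is correct and takes essentially the same route as the paper: apply Lemma \ref{lemma:excursion_bound} to bound $\beta(A_n)$, sum, and invoke the first Borel--Cantelli lemma. One small point the paper makes explicit and you leave implicit: it first restricts attention to the (full-measure) set of $b$ for which every excursion actually returns to $Y$, so that the $\sigma_{Y,x}^n$ are all finite and well-defined; your Borel--Cantelli step handles this in spirit, but it is cleaner to dispose of it up front as the paper does, since otherwise the events $A_n$ and the bound from Lemma \ref{lemma:excursion_bound} require $\sigma_{Y,x}^n$ to be a genuine (a.s.-finite) random variable.
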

    \begin{proof}
        We can consider only paths for which all the excursions from $Y$ return to $Y$, as for each $n$ the measure of paths that escape $Y$ at the $n$-th excursion is $0$ by a similar calculation to that in the previous lemma, and so it is the intersection of a countable set of full measure sets.
        \par As $\sum_{n=1}^{\infty} e^{-\frac{\delta}{m} r\br{n}} < \infty$, we also have $\sum_{n=1}^{\infty} e^{-\frac{\delta}{m} r\br{n}} \theta < \infty$ and so the corollary is a direct result of Borel-Cantelli and the previous lemma.
    \end{proof}
    This gives us a bound on the excursion lengths of the random walk out of such a set $Y$. We will need to translate this to a result on the diagonal flow (Lemma \ref{lemma:r_bound}), as bounds on it can later be translated into Diophantine approximation results.
    \begin{definition}
        We shall denote by $\bar{\tau}_{Y, x}^n\br{b}$ the consecutive return time for the diagonal flow sampled only along $t_n$ (calculated in (\ref{eq:tn_formula})), and the excursion length by $\bar{\sigma}_{Y, x}^n$. Specifically:
        \begin{equation}
            \bar{\tau}_{Y, x}^n\br{b} = \inf\set{n > \bar{\tau}_{Y, x}^{n-1} \mid a_{t_n} u_{\pi\br{b}} x \in Y},
        \end{equation}
        where:
        \begin{equation}
            \bar{\tau}_{Y, x}^1 = \bar{\tau}_{Y, x} = \inf\set{n > 0 \mid a_{t_n} u_{\pi\br{b}} x \in Y}.
        \end{equation}
        And the excursion length is also similarly defined for $n \ge 1$ as: $\bar{\sigma}_{Y, x}^n = \bar{\tau}_{Y, x}^{n+1} - \bar{\tau}_{Y, x}^{n}$, and for $n = 0$, it is simply: $\bar{\sigma}_{Y, x}^0 = \bar{\tau}_{Y, x}^{1}$
    \end{definition}
    \begin{corollary}\label{cor:diagonal_result}
        Fix $x \in X$ and some monotone increasing $r: \mathbb{N} \rightarrow \mathbb{R}$ such that $\sum_{n=1}^{\infty} e^{-\frac{\delta}{m} r\br{n}} < \infty$. There exists a compact set $Y' \subseteq X$ such that for $\beta$-a.e. $b \in B$, exists some $n_1$ such that for all $n \ge n_1$:
        \begin{equation}
            \bar{\sigma}_{Y', x}^n \le r\br{n}.
        \end{equation}
    \end{corollary}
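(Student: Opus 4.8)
The plan is to transport the random‑walk excursion estimate of Corollary \ref{cor:excursion_bound} to the diagonal‑flow sampling by slightly enlarging the recurrent set, using the \emph{uniform} bounded‑distance comparison of Lemma \ref{lemma:bdd_dist}. Concretely, fix $m \ge m_0$ and $\delta \in \br{0, -\log\br{CA_m}}$ and let $Y$ be the compact set furnished by Corollary \ref{cor:from_lemma_63}, so that $\theta := \sup_{x\in Y}\int_B e^{\frac{\delta}{m}\tau_{Y,x}\br{b}}\D\beta\br{b} < \infty$. Since $\sum_n e^{-\frac{\delta}{m}r\br{n}} < \infty$ by hypothesis (which forces $r\br{n}\to\infty$, hence $r\ge0$ eventually), Corollary \ref{cor:excursion_bound} applies with this $r$: for $\beta$-a.e.\ $b$ there is $n_0=n_0\br{x,b}\ge1$ with $\sigma_{Y,x}^k\br{b}<r\br{k}$ for all $k\ge n_0$, and — restricting as in the proof of that corollary — we may also assume the random walk returns to $Y$ infinitely often, so every $\tau_{Y,x}^k\br{b}$ is finite.

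By Lemma \ref{lemma:bdd_dist} there is a bounded neighborhood $\mathcal{O}\subseteq G$ of the identity, independent of $n$, $x$ and $b$, with $a_{t_n}u_{\pi\br{b}}x \in \mathcal{O}\cdot\br{h_{b_1^n}x}$ for all $n$; passing to its closure we take $\mathcal{O}$ compact. Set $Y' := \mathcal{O}Y = \set{gy : g\in\mathcal{O},\ y\in Y}$, which is compact as the continuous image of $\mathcal{O}\times Y$ under the $G$-action. Then $h_{b_1^n}x\in Y$ forces $a_{t_n}u_{\pi\br{b}}x\in Y'$, so the diagonal‑flow visit set $\bar S := \set{n\ge1 : a_{t_n}u_{\pi\br{b}}x\in Y'}$ contains the random‑walk visit set $S := \set{n\ge1 : h_{b_1^n}x\in Y}$.

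Now compare excursions. Enumerate $S=\set{\tau^1<\tau^2<\cdots}$ and $\bar S=\set{\bar\tau^1<\bar\tau^2<\cdots}$. Because $S\subseteq\bar S$, the open interval $\br{\bar\tau^n,\bar\tau^{n+1}}$ contains no point of $S$; hence, writing $k\br{n}:=\#\br{S\cap\seg{1,\bar\tau^n}}$, the points $\tau^{k\br{n}}$ and $\tau^{k\br{n}+1}$ are consecutive in $S$ and satisfy $\tau^{k\br{n}}\le\bar\tau^n<\bar\tau^{n+1}\le\tau^{k\br{n}+1}$, so $\bar\sigma_{Y',x}^n=\bar\tau^{n+1}-\bar\tau^n\le\tau^{k\br{n}+1}-\tau^{k\br{n}}=\sigma_{Y,x}^{k\br{n}}$. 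Moreover $k\br{n}\le n$ (since $S\cap\seg{1,\bar\tau^n}\subseteq\bar S\cap\seg{1,\bar\tau^n}$, a set of size $n$), and $k\br{n}\to\infty$ as $n\to\infty$ (since $\bar\tau^n\to\infty$ and $S$ is infinite). Therefore, for all $n$ large enough that $k\br{n}\ge n_0$, monotonicity of $r$ yields $\bar\sigma_{Y',x}^n\le\sigma_{Y,x}^{k\br{n}}<r\br{k\br{n}}\le r\br{n}$, and one takes $n_1$ to be the least such $n$.

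The one genuinely delicate point is the combinatorial step in the third paragraph: one must verify that $\tau^{k\br{n}}$ and $\tau^{k\br{n}+1}$ are indeed consecutive return times of the random walk — so that their difference is a single excursion to which Corollary \ref{cor:excursion_bound} applies — and keep the index shift $k\br{n}\le n$ straight so that the monotonicity of $r$ is used in the favorable direction. Everything else is a formal consequence of Lemma \ref{lemma:bdd_dist} (for the inclusion $S\subseteq\bar S$ and compactness of $Y'$) and Corollary \ref{cor:excursion_bound} (for the eventual bound on $\sigma_{Y,x}^k$).
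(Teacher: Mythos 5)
Your proposal is correct and follows essentially the same route as the paper: enlarge $Y$ to $Y' = \mathcal{O}Y$ via Lemma \ref{lemma:bdd_dist}, observe that every return of the random walk to $Y$ forces a return of the sampled diagonal flow to $Y'$, so each diagonal-flow excursion is contained in a random-walk excursion of index at most $n$, and conclude by Corollary \ref{cor:excursion_bound} and the monotonicity of $r$. Your explicit bookkeeping with the counting function $k\br{n}$ (showing $k\br{n}\le n$ and $k\br{n}\to\infty$) is a cleaner rendering of the index comparison that the paper handles by choosing $n_1 = \tau_{Y,x}^{n_0}$.
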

    \begin{proof}
        Take $Y$ to be as before. By Lemma \ref{lemma:bdd_dist} the diagonal flow remains bounded distance from the random walk, we shall define by some compact neighborhood of the identity $O$. Choose $Y' = \bigcup_{y \in Y} Oy$ to be a bounded compact set containing these neighborhoods for all the elements of $Y$ (which is compact). Denote by $B'$ the full measure set from Corollary \ref{cor:excursion_bound} for which bound (\ref{eq:cor:excursion_bound}) holds. Take some $b \in B'$ and the corresponding $n_0$. Denote by $n_1 = \tau_{Y, x}^{n_0}$. Take some $n \ge n_1$ and consider the $n$-th excursion out of $Y'$ (possibly of length 1 if we remain in $Y'$). As each excursion is at least of length 1, the initial time ($\bar{\tau}_{Y, x}^{n}$) is greater than $\tau_{Y, x}^{n_0}$.
        \par Notice that by the construction of $Y'$, if the random walk returns to $Y$, then the diagonal flow must return to $Y'$. Therefore, excursions out of $Y'$ of the diagonal flow are fully contained in excursions out of $Y$ of the random walk, but this is in a ``time-wise'' sense as the diagonal flow is continuous as opposed to the discrete random walk. If we denote by $t_s$ the starting time and by $t_e$ the end time of the diagonal flow excursion, then there exists an excursion of the random walk, whose index we will denote by $n'$, such that:
        \begin{equation}
            \tau_{Y, x}^{n'} \le t_s \le t_e \le \tau_{Y, x}^{n'+1}
        \end{equation}
        \par Therefore, by the above it is clear that both: $n \ge n'$ (as we cannot return to $Y'$ without returning to $Y$) and $\bar{\sigma}_{Y', x}^n \le \sigma_{Y, x}^{n'} \le r\br{n'}$. As $r$ is monotone, we have:
        \begin{equation}
            \bar{\sigma}_{Y', x}^n \le \sigma_{Y, x}^{n'} \le r\br{n'} \le r\br{n}.
        \end{equation}
    \end{proof}
}
\subsection{Minimal Vectors Bounds Along an Excursion}
{
    We shall now show how a bound on the length of the minimal non zero vector can be obtained from the bound on the excursion length. We'll denote by $\Delta$ the $\sup$ norm of the minimal vector in the lattice, set:
	\begin{equation}\label{eq:l_def}
		l\br{x} = \log \Delta^{-1}\br{x},
	\end{equation}
	and write:
    \begin{equation}
        \nu_{Y',x}^n \br{b} = \max\limits_{t_{\bar{\tau}_{Y', x}^{n}\br{b}} \le t < t_{\bar{\tau}_{Y', x}^{n+1}\br{b}}} l \br{ a_{t} u_{\pi\br{b}} x},
    \end{equation}
    which is the maximum along the $n$-th excursion of the diagonal flow, when sampling the excursions at times $t_i$, but the maximum is for all times.
    \begin{lemma}\label{lemma:growth_bound}
        Let $Y'$ be as in Corollary \ref{cor:diagonal_result}. There exists $Q > 0$ depending on $Y'$ such that we have the following bound on $\nu_{Y',x}^n \br{b}$:
        \begin{equation}
            \nu_{Y',x}^n \br{b} \le -\bar{\sigma}_{Y', x}^n\br{b}\frac{d\log \kappa}{\br{d+1}^2} + Q.
        \end{equation}
    \end{lemma}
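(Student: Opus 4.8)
The plan is to exploit the fact that, by the very definition of $\bar{\tau}^n_{Y',x}$, the diagonal flow lies inside $Y'$ at \emph{both} ends of the $n$-th excursion --- at real times $t_{\bar{\tau}^n}$ and $t_{\bar{\tau}^{n+1}}$ --- and to combine these two constraints with the eigenvalue structure of $a_t = \operatorname{diag}(e^t, e^{-t/d}I_d)$. Using only the starting point $t_{\bar{\tau}^n}$ would already give a bound of this shape, but with a coefficient $-\tfrac{\log\kappa}{d+1}$ in place of $-\tfrac{d\log\kappa}{(d+1)^2}$; the sharper constant comes precisely from also using that the flow returns to $Y'$ at the end of the excursion.

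First, since $Y'$ is compact, Mahler's compactness criterion furnishes a constant $c = c(Y') \in (0,1]$ such that every lattice represented by a point of $Y'$ has all of its nonzero vectors of $\sup$-norm at least $c$; equivalently $l(y) \le \log c^{-1}$ for every $y \in Y'$. Set $Q := \log c^{-1}$, enlarged if necessary so that $Q > 0$. Second, by $(\ref{eq:tn_formula})$ the map $j \mapsto t_j$ is linear, so the real-time length of the $n$-th excursion is
\begin{equation*}
  S \;:=\; t_{\bar{\tau}^{n+1}_{Y',x}(b)} - t_{\bar{\tau}^{n}_{Y',x}(b)} \;=\; \bar{\sigma}^n_{Y',x}(b)\cdot\br{-\frac{d\log\kappa}{d+1}} \;>\; 0 .
\end{equation*}

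The core is an elementary estimate on $a_t$: if a nonzero $v = (v_0, \mathbf{v}) \in \mathbb{R}^{d+1}$ satisfies $\norm{v}_\infty \ge c$ and $\norm{v a_S}_\infty \ge c$, then $\norm{v a_s}_\infty \ge c\, e^{-S/(d+1)}$ for every $s \in [0,S]$. To see this, suppose $\norm{v a_s}_\infty < c\, e^{-S/(d+1)}$ for some $s \in [0,S]$; since $\norm{v a_s}_\infty = \max\br{e^{s}\abs{v_0},\, e^{-s/d}\norm{\mathbf{v}}_\infty}$, this forces $\abs{v_0} < c\, e^{-S/(d+1)-s}$ and $\norm{\mathbf{v}}_\infty < c\, e^{-S/(d+1)+s/d}$. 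Substituting into $\norm{v}_\infty = \max(\abs{v_0}, \norm{\mathbf{v}}_\infty) \ge c$ (for $s \ge 0$ the $\norm{\mathbf{v}}_\infty$ bound dominates) yields $s > \tfrac{d}{d+1}S$, whereas substituting into $\norm{v a_S}_\infty = \max(e^{S}\abs{v_0},\, e^{-S/d}\norm{\mathbf{v}}_\infty) \ge c$ (for $s \le S$ the $e^{S}\abs{v_0}$ bound dominates) yields $s < \tfrac{d}{d+1}S$ --- a contradiction. The crossover time $s = \tfrac{d}{d+1}S$ is exactly where the expanding rate $1$ balances the contracting rate $1/d$, and accounts for the exponent $1/(d+1)$. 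This balancing estimate is the only genuinely delicate step.

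To finish, pick $t^\ast \in [t_{\bar{\tau}^n}, t_{\bar{\tau}^{n+1}})$ achieving the maximum $\nu^n_{Y',x}(b)$ and put $s^\ast := t^\ast - t_{\bar{\tau}^n} \in [0,S)$. Let $w$ be a nonzero vector of the lattice $a_{t^\ast} u_{\pi(b)} x$ with $\norm{w}_\infty = \Delta(a_{t^\ast} u_{\pi(b)} x)$. Since $a_{t^\ast} = a_{s^\ast} a_{t_{\bar{\tau}^n}}$, this $w$ is obtained by the action of $a_{s^\ast}$ from a nonzero vector $v$ of the lattice $a_{t_{\bar{\tau}^n}} u_{\pi(b)} x \in Y'$, so $\norm{v}_\infty \ge c$; and $v\, a_S$ is then a nonzero vector of $a_{t_{\bar{\tau}^{n+1}}} u_{\pi(b)} x = a_S\br{a_{t_{\bar{\tau}^n}} u_{\pi(b)} x} \in Y'$, so $\norm{v\, a_S}_\infty \ge c$. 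Applying the estimate above at $s = s^\ast$ gives $\norm{w}_\infty = \norm{v\, a_{s^\ast}}_\infty \ge c\, e^{-S/(d+1)}$, hence
\begin{equation*}
  \nu^n_{Y',x}(b) \;=\; \log\norm{w}_\infty^{-1} \;\le\; \frac{S}{d+1} + Q \;=\; -\,\bar{\sigma}^n_{Y',x}(b)\,\frac{d\log\kappa}{(d+1)^2} + Q ,
\end{equation*}
which is the claimed bound with $Q = Q(Y')$ independent of $n$, $x$ and $b$. Everything besides the balancing estimate is bookkeeping with the one-parameter group $a_t$ and the definition of $l$.
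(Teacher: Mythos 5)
Your proof is correct and takes essentially the same route as the paper's: a Mahler lower bound $c$ on the shortest vector over the compact set $Y'$, the two constraints at the start and end of the excursion, and the balancing of the expansion rates $1$ and $1/d$ at $s=\tfrac{d}{d+1}S$, which is exactly where the exponent $1/(d+1)$ comes from. The only difference is presentational: you establish the key estimate by contradiction, whereas the paper bounds $\norm{v}$ below directly by the maximum of the two one-sided estimates $\varepsilon e^{-t/d}$ and $\varepsilon e^{-(T-t)}$.
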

    \begin{proof}
        First notice that $Y'$ is bounded, and therefore by Mahler's Criterion (see e.g. \cite{kleinbock_survey} - Kleinbock survey, Theorem 2.1) $\Delta$ is bounded from below on $Y'$. We'll denote by $\varepsilon = \inf_{y \in Y'} \Delta \br{y} > 0$. Notice that by the definition of $\bar{\tau}$, if we denote $n_s = \bar{\tau}_{Y', x}^{n}\br{b}$ and $n_e = \bar{\tau}_{Y', x}^{n+1}\br{b}$ then we have $l\br{a_{t_{n_s}} u_{\pi\br{b}} x} \le -\log \varepsilon$ and $l\br{a_{t_{n_e}} u_{\pi\br{b}} x} \le -\log \varepsilon$.
        \par Next, notice that $n_e - n_s = \bar{\sigma}_{Y', x}^n =: \sigma$ and that $a_{t_{n_e}} u_{\pi\br{b}} x$ differs from $a_{t_{n_s}} u_{\pi\br{b}} x$ exactly by applying the map $a_{T}$, for $T := t_{n_e} - t_{n_s}$. Recall we had the formula (\ref{eq:tn_formula}) that states: $t_n = -n\frac{d\log \kappa}{d+1}$, so:
        \begin{equation}
            T = t_{n_e} - t_{n_s} = -\br{n_e-n_s}\frac{d\log \kappa}{d+1} = -\sigma\frac{d\log \kappa}{d+1}.
        \end{equation}
        Assume that the \textbf{minimal} minimal vector along the excursion is attained at time $t' = t_{n_s}+t$, and denote it by $v$. It is the image of some vector in the lattice at time $t_{n_s}$, which is $a_{t' - t_{n_s}}^{-1} v = a_{-t} v$, and has an image in the lattice at time $t_{n_e}$,  which is $a_{t_{n_e} - t'} v = a_{T - t} v$. We have that $\norm{a_{-t} v} \ge \varepsilon$ and $\norm{a_{T - t} v} \ge \varepsilon$ as these are vectors of lattices in $Y'$, for which the minimal vector's norm is bounded below by $\varepsilon$ (and these aren't necessarily minimal).
        \par Notice that $a_{-t}$ can expand the vector by at most $e^{t/d}$, and then $a_{T - t}$ can expand by at most $e^{T-t}$. Both must expand to a vector of norm $\varepsilon$ at least, so we can therefore give the bound:
        \begin{equation}
            \norm{v} \ge \max \set {\varepsilon e^{-\br{T-t}}, \varepsilon e^{-t/d}} \ge \varepsilon e^{-\frac{T}{d+1}},
        \end{equation}
        as when $t=\frac{d}{d+1}T$ these expressions are equal, and otherwise one of them must be greater than this value. In conclusion, as $l$ is at a maximum when $\Delta$ is at a minimum:
        \begin{multline}
            \quad\quad\quad\quad\quad\quad
            \nu_{Y',x}^n \br{b} = 
            \max\limits_{
                t_{\bar{\sigma}_{Y', x}^{n}\br{b}} \le t <
                t_{\bar{\sigma}_{Y', x}^{n+1}\br{b}}
            } l \br{ a_t u_{\pi\br{b}} x} = \\ =
            l \br{a_{t'} u_{\pi\br{b}} x} =
            \log \norm{v}^{-1} \le \\ \le
            \frac{T}{d+1} - \log\varepsilon = 
            -\sigma\frac{d\log \kappa}{\br{d+1}^2} - \log\varepsilon.
            \quad\quad\quad\quad\quad\quad
        \end{multline}
        Taking $Q = -\log\varepsilon$ gives the desired result.
    \end{proof}
    \begin{corollary}
        Fix $x \in X$ and some monotone increasing $r: \mathbb{N} \rightarrow \mathbb{R}$ such that:
        \begin{equation}
            \sum_{n=1}^{\infty} \exp\br{{\frac{\delta\br{d+1}^2}{m d \log \kappa} r\br{n}}} < \infty.
        \end{equation}
        There exists a compact set $Y \subseteq X$ such that for $\beta$-a.e. $b \in B$, exists some $n_0$ such that for all $n \ge n_0$:
        \begin{equation}
            \nu_{Y, x}^n \le r\br{n}.
        \end{equation}
    \end{corollary}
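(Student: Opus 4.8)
The plan is to combine the excursion-length estimate of Corollary \ref{cor:diagonal_result} with the pointwise bound of Lemma \ref{lemma:growth_bound} by means of a linear rescaling of the threshold function $r$. First I would isolate the data that does not depend on $r$: inspecting the proof of Corollary \ref{cor:diagonal_result}, its compact set $Y'$ is built from the set $Y$ of Corollary \ref{cor:from_lemma_63} (which depends only on $m$ and $\delta$) and from the identity neighborhood $O$ of Lemma \ref{lemma:bdd_dist} (which depends on nothing at all), so I fix this $Y'$ once and for all, let $Q > 0$ be the constant that Lemma \ref{lemma:growth_bound} attaches to it, and set $C_0 := -\tfrac{d\log\kappa}{\br{d+1}^2}$, which is strictly positive because $0<\kappa<1$. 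Thus $Y'$, $Q$ and $C_0$ are all independent of $r$, and there is no circularity.

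Next I would introduce the auxiliary threshold $\tilde r\br{n} := \tfrac{r\br{n}-Q}{C_0}$. Since $r$ is monotone increasing and $C_0>0$, so is $\tilde r$ (it may be negative for small $n$, which is permitted in Corollary \ref{cor:diagonal_result}), and from $\sum_n\exp\br{\tfrac{\delta\br{d+1}^2}{md\log\kappa}r\br{n}}<\infty$ one gets in particular $r\br{n}\to\infty$. The key identity is
\[
e^{-\frac{\delta}{m}\tilde r\br{n}} \;=\; e^{\frac{\delta Q}{m C_0}}\,e^{-\frac{\delta}{m C_0}r\br{n}} \;=\; e^{\frac{\delta Q}{m C_0}}\,\exp\!\br{\frac{\delta\br{d+1}^2}{md\log\kappa}\,r\br{n}},
\]
using $\tfrac{1}{C_0}=-\tfrac{\br{d+1}^2}{d\log\kappa}$. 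Hence $\sum_n e^{-\frac{\delta}{m}\tilde r\br{n}}$ equals a fixed positive constant times the series in the hypothesis, and therefore converges.

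Now I would apply Corollary \ref{cor:diagonal_result} to the function $\tilde r$ (its output set is exactly the $Y'$ fixed above): for $\beta$-a.e.\ $b\in B$ there is $n_1=n_1\br{x,b}$ such that $\bar\sigma_{Y',x}^n\br{b}\le\tilde r\br{n}$ for all $n\ge n_1$. Plugging this into Lemma \ref{lemma:growth_bound} gives, for $n\ge n_1$,
\[
\nu_{Y',x}^n\br{b} \;\le\; C_0\,\bar\sigma_{Y',x}^n\br{b} + Q \;\le\; C_0\,\tilde r\br{n} + Q \;=\; r\br{n},
\]
so the statement holds with $Y=Y'$ and $n_0=n_1$. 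The only step requiring genuine care — and the closest thing to an obstacle in what is otherwise a change-of-variables argument — is the sign bookkeeping: because $\log\kappa<0$ the coefficient $-\tfrac{d\log\kappa}{\br{d+1}^2}$ is positive while the exponent coefficient $\tfrac{\delta\br{d+1}^2}{md\log\kappa}$ is negative, and one must check these fit together so that the rescaled series is exactly the hypothesised one, together with verifying (as above) that $Y'$ and $Q$ from Lemma \ref{lemma:growth_bound} are chosen independently of $r$ so that the substitution $r\mapsto\tilde r$ is legitimate.
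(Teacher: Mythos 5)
Your proposal is correct and follows essentially the same route as the paper: the auxiliary function $\tilde r\br{n} = \br{r\br{n}-Q}/C_0$ is precisely the paper's $\bar{r}\br{n} = -\tfrac{\br{d+1}^2}{d\log\kappa}\br{r\br{n}-Q}$, the convergence check is the same change of variables, and you conclude by applying Corollary \ref{cor:diagonal_result} and Lemma \ref{lemma:growth_bound} exactly as the paper does with $Y=Y'$ and $n_0=n_1$. The extra care you take to confirm that $Y'$ and $Q$ are chosen independently of $r$ is a sensible (if implicit in the paper) sanity check, not a deviation.
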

    \begin{proof}
        Denote $\bar{r}\br{n} = - \frac{\br{d+1}^2}{d\log\kappa} \br{r\br{n} - Q}$. $\bar{r}$ is also monotone increasing as $\log \kappa < 0$. Notice:
        \begin{multline}
            \quad\quad\quad\quad\quad\quad
            \sum_{n=1}^{\infty} \exp\br{-\frac{\delta}{m} \bar{r}\br{n}} =
            \sum_{n=1}^{\infty} \exp\br{\frac{\delta\br{d+1}^2}{m d \log \kappa} r\br{n} - Q\frac{\delta\br{d+1}^2}{m d \log \kappa}} = \\ = 
            \exp\br{-Q\frac{\delta\br{d+1}^2}{m d \log \kappa}} \sum_{n=1}^{\infty} \exp\br{\frac{\delta\br{d+1}^2}{m d \log \kappa} r\br{n}}
            < \infty.
            \quad\quad\quad\quad\quad\quad
        \end{multline}
        We can therefore use it in Corollary \ref{cor:diagonal_result}, and take $n_0 = n_1$, $Y = Y'$. From Corollary \ref{cor:diagonal_result} we have that for all $n \ge n_0$ $\bar{\sigma}_{Y, x}^n \le \bar{r}\br{n}$. Therefore using Lemma \ref{lemma:growth_bound} we get:
        \begin{equation}
            \nu_{Y,x}^n \br{b} \le 
            -\bar{\sigma}_{Y, x}^n\br{b}\frac{d\log \kappa}{\br{d+1}^2} + Q \le 
            -\bar{r}\br{n}\frac{d\log \kappa}{\br{d+1}^2} + Q =
            r\br{n}.
        \end{equation}
    \end{proof}
    Recall from Corollary \ref{cor:m_0}: $\delta \in \br{0, -\log \br{CA_m}}$, from Definition \ref{def:ch} (3): $A_m = c\kappa^{m \alpha}$ and from Theorem \ref{thm:ch}: $\alpha = \frac{\varrho \varpi}{d+1}$ (for $0 < \varrho < 1$). Combining these, we get:
    \begin{equation}
        0 < \delta < -\log \br{CA_m} = -\log \br{Cc\kappa^{m \alpha}} = -\log \br{Cc\kappa^{m \frac{\varrho \varpi}{d+1}}} = -m \frac{\varrho \varpi}{d+1}\log \kappa - \log Cc.
    \end{equation}
    First take $\varepsilon > 0$, and set $\varepsilon' = \frac{\varepsilon}{2}$ and choose $\varrho = 1 - \varepsilon'$, and then choose:
    \begin{equation}
        \delta = -m \frac{\varrho \varpi}{d+1}\log \kappa - \log Cc - \eta = -m \frac{\br{1-\varepsilon'} \varpi}{d+1}\log \kappa - \log Cc - \eta,
    \end{equation}
    where $\eta \in \br{0,1}$ is small enough so that $\delta$ is within the required range. So we have:
    \begin{multline}
        \frac{\delta\br{d+1}^2}{m d \log \kappa} = \br{-m \frac{\br{1-\varepsilon'} \varpi}{d+1}\log \kappa - \log Cc - \eta}\frac{\br{d+1}^2}{m d \log \kappa} = \\ =
        -\frac{\varpi\br{d+1}}{d}\br{1-\varepsilon'} + \frac{1}{m}\br{\frac{\br{d+1}^2}{-d \log\kappa}\br{\log Cc + \eta}} \le \\ \le
        \frac{\varpi\br{d+1}}{d}\br{1-\varepsilon'} + \frac{1}{m}\br{\frac{\br{d+1}^2}{-d \log\kappa}\br{\log Cc + 1}}
        = -\frac{\varpi\br{d+1}}{d}\br{1-\varepsilon'} + \frac{D}{m}.
    \end{multline}
    Notice $\log \kappa < 0$, and denoting by $D := \br{\frac{\br{d+1}^2}{-d \log\kappa}\br{\log Cc + 1}}$ we see $D$ is a constant (i.e., does not depend on $m$, $\rho$, but only on the IFS). Next choose $m > m_0$ (recall $m_0$ from Corollary \ref{cor:from_lemma_63}) large enough such that $\frac{D}{m} < \varepsilon'\frac{\varpi\br{d+1}}{d}$, which is possible as $\frac{\varpi\br{d+1}}{d}$ is also a constant of the IFS. Then we get:
    \begin{equation}
        \frac{\delta\br{d+1}^2}{m d \log \kappa} \le -\frac{\varpi\br{d+1}}{d}\br{1-\varepsilon'} + \frac{D}{m} \le -\frac{\varpi\br{d+1}}{d}\br{1-2\varepsilon'} = 
        -\frac{\varpi\br{d+1}}{d}\br{1-\varepsilon}.
    \end{equation}
    This gives us the following result, which is no longer dependent on $\varrho$ or $m$:
    \begin{lemma}\label{lemma:dicrete_time_bound}
        For any $0 < \gamma < \frac{\varpi\br{d+1}}{d}$ and any $x \in X$ if $r: \mathbb{N} \rightarrow \mathbb{R}$ is monotone increasing such that:
        \begin{equation}
            \sum_{n=1}^{\infty} \exp\br{-\gamma r\br{n}} < \infty,
        \end{equation}
        then there exists a compact set $Y \subseteq X$ such that for $\beta$-a.e. $b \in B$, exists some $n_0$ such that for all $n \ge n_0$:
        \begin{equation}
            \nu_{Y, x}^n \le r\br{n}.
        \end{equation}
    \end{lemma}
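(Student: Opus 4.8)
The plan is to obtain Lemma~\ref{lemma:dicrete_time_bound} essentially for free from the Corollary stated just before it, by feeding that Corollary the $\delta$ produced in the parameter-optimization paragraph that precedes the lemma. Recall what that paragraph establishes: for any $\varepsilon>0$, putting $\varepsilon'=\varepsilon/2$ and $\varrho=1-\varepsilon'$, one may pick a small $\eta\in(0,1)$ and then any $m>m_0$ large enough that $D/m<\varepsilon'\frac{\varpi(d+1)}{d}$ (here $m_0$ comes from Corollary~\ref{cor:from_lemma_63} and $D=\frac{(d+1)^2}{-d\log\kappa}(\log Cc+1)$ is a constant of the IFS alone), so that
\[
\delta:=-m\,\frac{(1-\varepsilon')\varpi}{d+1}\log\kappa-\log Cc-\eta
\]
lies in the admissible range $\br{0,\,-\log\br{CA_m}}$ and satisfies $\frac{\delta(d+1)^2}{m\,d\,\log\kappa}\le-\frac{\varpi(d+1)}{d}(1-\varepsilon)$.

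Concretely, given $0<\gamma<\frac{\varpi(d+1)}{d}$, I would first set $\varepsilon:=1-\frac{\gamma d}{\varpi(d+1)}$, which the two hypotheses on $\gamma$ place in $(0,1)$ and which gives $\gamma=\frac{\varpi(d+1)}{d}(1-\varepsilon)$. Running the optimization above with this $\varepsilon$ fixes $\varrho$, $\eta$ and an admissible $m$, hence a concrete $\delta$, for which $\frac{\delta(d+1)^2}{m\,d\,\log\kappa}\le-\gamma$. Next I would observe that the assumption $\sum_{n\ge1}\exp(-\gamma r(n))<\infty$ forces $r(n)\to\infty$ (a monotone increasing, bounded $r$ would leave the terms $\ge\exp(-\gamma\sup_n r(n))>0$), so $r(n)\ge0$ for all large $n$; for those $n$ we have $\exp\br{\frac{\delta(d+1)^2}{m\,d\,\log\kappa}r(n)}\le\exp(-\gamma r(n))$, whence $\sum_{n\ge1}\exp\br{\frac{\delta(d+1)^2}{m\,d\,\log\kappa}r(n)}<\infty$ as well (the finitely many initial terms are harmless). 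This is exactly the hypothesis of the preceding Corollary for this $m$ and $\delta$, so that Corollary yields a compact $Y\subseteq X$ and, for $\beta$-a.e.\ $b$, an $n_0$ with $\nu_{Y,x}^n\le r(n)$ for all $n\ge n_0$. Since neither $Y$ nor the conclusion refers to $\varrho$, $\varepsilon$ or $m$ any longer, this is precisely the statement of Lemma~\ref{lemma:dicrete_time_bound}.

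I do not expect a real obstacle: all of the analysis — the contraction hypothesis (Theorem~\ref{thm:ch}), the exponentially recurrent set with its excursion tail bound and Borel--Cantelli consequence (Corollary~\ref{cor:from_lemma_63}, Lemma~\ref{lemma:excursion_bound}, Corollary~\ref{cor:excursion_bound}), the transfer to the diagonal flow (Corollary~\ref{cor:diagonal_result}) and the minimal-vector growth estimate (Lemma~\ref{lemma:growth_bound}) — has already been carried out, and the optimization paragraph has already reduced the exponent to the clean IFS-constant $\frac{\varpi(d+1)}{d}$. The one thing needing care is the order of the quantifiers in the parameter selection: $\varepsilon$ is chosen first from $\gamma$, then $\varrho$, $\eta$ and the constant $D$ are determined, and only afterwards is $m$ taken large enough; since $D$ and $m_0$ depend on the IFS but not on $\varepsilon$ or $m$, this is legitimate, and it is exactly the order used in the paragraph above.
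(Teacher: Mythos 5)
Your proposal is correct and follows essentially the same route as the paper: the paper gives no separate proof of this lemma, treating the parameter-optimization paragraph (choice of $\varepsilon'$, $\varrho$, $\eta$, $\delta$ and then $m$ large) together with the preceding corollary as the proof, which is exactly what you make explicit. Your additional observation that $r(n)\to\infty$, so that $\exp\br{\frac{\delta(d+1)^2}{m d \log\kappa}r(n)}\le\exp(-\gamma r(n))$ holds for all large $n$ and the comparison of the two series is legitimate, is a detail the paper leaves implicit and is a welcome clarification.
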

    Lemma \ref{lemma:dicrete_time_bound} gives us a bound for excursions along the trajectory, sampled at discrete times. In Lemma \ref{lemma:r_bound} we give a result for continuous time, and pass to a bound on on the function $l$ (see (\ref{eq:l_def})) at every time point, rather than the maximal value along an excursion.
    \begin{lemma}\label{lemma:r_bound}
        For any $0 < \gamma < \frac{\varpi\br{d+1}}{d}$ and any $x \in X$ if $r: \mathbb{N} \rightarrow \mathbb{R}$ is monotone increasing such that:
        \begin{equation}
            \sum_{t=1}^{\infty} \exp\br{-\gamma r\br{t}} < \infty,
        \end{equation}
        then for $\beta$-a.e. $b \in B$, exists some $t_0$ such that for all $t \ge t_0$:
        \begin{equation}
            l\br{a_t u_{\pi \br{b}} x} \le r\br{t},
        \end{equation}
        where $l$ is as in $\br{\ref{eq:l_def}}$.
    \end{lemma}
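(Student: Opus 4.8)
The plan is to derive the continuous-time bound from the discrete-excursion bound of Lemma \ref{lemma:dicrete_time_bound} by absorbing the affine time-reparametrisation $t \leftrightarrow n$ into a rescaling of $r$, and then exploiting that $\nu_{Y,x}^n$ already controls $l$ along the entire $n$-th excursion of the diagonal flow rather than only at the sampled instants $t_k$. Set $c_0 := t_1 = -\frac{d\log\kappa}{d+1} > 0$, so that $t_n = c_0 n$ by (\ref{eq:tn_formula}), and extend $r$ to a non-decreasing function on $\segco{1,\infty}$ by $r(s) := r\br{\floor{s}}$.

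First I would pass to the rescaled test function $\tilde r(n) := r\br{c_0 n}$ for $n \in \mathbb{N}$ (the finitely many indices with $c_0 n < 1$, if any, may be set to $r(1)$ and are immaterial). This $\tilde r$ is non-decreasing, and it remains exponentially summable: for each integer $s \ge 1$ the condition $c_0 n \in \segco{s, s+1}$ cuts out at most $\floor{1/c_0}+1$ integers $n$, and for each of them $\exp\br{-\gamma\tilde r(n)} = \exp\br{-\gamma r(s)}$, so $\sum_{n\ge 1}\exp\br{-\gamma\tilde r(n)} \le \br{\floor{1/c_0}+1}\sum_{s\ge1}\exp\br{-\gamma r(s)} + O(1) < \infty$. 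Since $0 < \gamma < \frac{\varpi\br{d+1}}{d}$, I can then apply Lemma \ref{lemma:dicrete_time_bound} to $\tilde r$: it produces a compact $Y \subseteq X$ so that for $\beta$-a.e. $b \in B$ there is $n_0 = n_0(x,b)$ with $\nu_{Y,x}^n\br{b} \le \tilde r(n)$ for every $n \ge n_0$.

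Next I would pass to continuous time. Fix such a $b$ — I may also restrict to the full-measure set on which every excursion of the diagonal flow out of $Y$ is finite, a condition already built into Corollary \ref{cor:diagonal_result}. Since each excursion has length at least $1$, $\bar{\tau}_{Y,x}^n\br{b} \ge n$, hence $t_{\bar{\tau}_{Y,x}^n\br{b}} = c_0\bar{\tau}_{Y,x}^n\br{b} \ge c_0 n$ and $\bar{\tau}_{Y,x}^n\br{b} \to \infty$, so the excursion intervals $\segco{t_{\bar{\tau}_{Y,x}^n\br{b}},\ t_{\bar{\tau}_{Y,x}^{n+1}\br{b}}}$, $n \ge 1$, partition $\segco{t_{\bar{\tau}_{Y,x}^1\br{b}},\infty}$. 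Putting $t_0 := t_{\bar{\tau}_{Y,x}^{n_0}\br{b}}$, any $t \ge t_0$ lies in one such interval with index $n \ge n_0$, and then $l\br{a_t u_{\pi\br{b}} x} \le \nu_{Y,x}^n\br{b} \le \tilde r(n) = r\br{c_0 n} \le r(t)$, the first inequality being the definition of $\nu_{Y,x}^n$ as a maximum over \emph{all} times in that interval and the last following from $c_0 n \le t_{\bar{\tau}_{Y,x}^n\br{b}} \le t$ and the monotonicity of $r$. This is the desired bound.

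The hard part is essentially just the bookkeeping of the time change — there is no deep difficulty. The one genuine subtlety is that the reparametrisation $t = c_0 n$ may compress rather than dilate the index (this happens when $c_0 < 1$, i.e.\ for $\kappa$ close to $1$), so one cannot feed $r$ directly into Lemma \ref{lemma:dicrete_time_bound}; rescaling to $\tilde r = r(c_0\,\cdot)$ restores the correct direction of comparison at the price of the elementary preimage count above, which is exactly what preserves summability. I would also be careful to check that the diagonal-flow excursion intervals really tile $\segco{t_0,\infty}$, so that no large $t$ is missed — this is the a.s.\ recurrence already invoked for Corollary \ref{cor:diagonal_result} — and that $\nu_{Y,x}^n$, being a supremum over a continuum of times, legitimately dominates $l\br{a_t u_{\pi\br{b}} x}$ at every such $t$ rather than only at the sampled points $t_k$.
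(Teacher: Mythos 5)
Your proof follows the same route as the paper's: rescale $r$ to account for the time parametrisation $t_n = c_0 n$, feed the rescaled function into Lemma \ref{lemma:dicrete_time_bound}, and then use that $\nu_{Y,x}^n$ controls $l$ over the entire $n$-th excursion together with monotonicity of $r$ to pass to continuous time. Your handling of two details is in fact slightly more careful than the paper's — you justify summability of $\sum_n \exp\br{-\gamma r(c_0 n)}$ with an explicit preimage count rather than a somewhat loose appeal to the integral test, and you correctly set $t_0 = t_{\bar\tau_{Y,x}^{n_0}(b)}$ (the actual start of the $n_0$-th excursion) where the paper writes $t_0 = t_{n_0}$, which is what is needed to guarantee $n_1 \ge n_0$ for every $t \ge t_0$.
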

    \begin{proof}
        Notice that because $r$ is monotone, by using the integral test for convergence, passing first to the convergence of the integral and then back to the convergence of a series, we have that:
        \begin{equation}
            \sum_{n=1}^{\infty} \exp\br{-\gamma r\br{-n\frac{d\log \kappa}{d+1}}} < \infty.
        \end{equation}
        We'll define $g\br{n} = r\br{-n\frac{d\log \kappa}{d+1}}$. So we can use the previous lemma on $n$, giving us a compact set $Y \subseteq X$ such that for $\beta$-a.e. $b \in B$, exists some $n_0$ such that for all $n \ge n_0$:
        \begin{equation}
            \nu_{Y, x}^n \le g\br{n}.
        \end{equation}
        Take $t_0 = t_{n_0}$ and fix some $t \ge t_0$. It belongs to some excursion, which we shall index by $n_1$. Since $\nu_{Y, x}^{n_1}$ gives a bound for all the excursion, and by monotonicity of $r$, as $t_n = -n\frac{d\log \kappa}{d+1}$:
        \begin{equation}
            l \br{a_t u_{\pi \br{b}} x} \le \nu_{Y, x}^{n_1} \le g\br{n_1} = r\br{-n_1\frac{d\log \kappa}{d+1}} = r\br{t_{n_1}} \le r \br{t}.
        \end{equation}
    \end{proof}
    This concludes the proof of a bound on the rate of escape for the diagonal flow, which we must now translate into a Diophantine approximation result in Chapter \ref{sec:main_computation}.
}
\section{Proof appendix}\label{appendix}
{
    Proof of Lemma \ref{lemma:bdd_dist}:
    \begin{proof}
        This Lemma can be deduced from the proof of Lemma 11.3 in \cite{sw}. We shall repeat the proof here, and emphasize the additional conclusions. This proof is dependent on additional notations, definitions and lemmas from \cite{sw} chapter 10.1 which I will not repeat here.
        \par Decompose $h_n = h_{b_1^n} = a_{t_n} k_n u_{\alpha_n}$ for some $t_n \in  \mathbb{R}$, $k_n \in K$ and $\alpha_n \in \mathcal{M} = \mathbb{R}^d$. Also write $\beta_n = \pi\br{T^n b}$, and let $\bar{h}_n = u_{-\beta_n} a_{t_n} k_n u_{\pi\br{b}}$. Obviously $\bar{h}_n$ and $h_n$ agree on their projections to $AK$, and on the other hand, letting them act on $\mathcal{M}$ via the isomorphism $\iota: \mathcal{M} \rightarrow P / AK$, we have (recall $\rho$ from Chapter \ref{ifs_translation}):
        \begin{multline}
            \rho\br{\bar{h}_n^{-1}}\br{\beta_n} = \rho\br{u_{\pi\br{b}}^{-1} k_n^{-1} a_{t_n}^{-1} u_{-\beta_n}^{-1}} \br{\beta_n} = \rho\br{u_{\pi\br{b}}^{-1} k_n^{-1} a_{t_n}^{-1}} \br{0} = \rho\br{u_{\pi\br{b}}^{-1}}\br{0} = \\
            = \pi\br{b} = \phi_{b^1_n} \br{\beta_n} = \rho\br{h_n^{-1}} \br{\beta_n}.
        \end{multline}
        So $\bar{h}_n = h_n$, and thus $\bar{h}_n x_0 = h_n x_0$. Since $\Phi$ is strictly contracting, the limit set $\mathcal{K}$ is compact, and so the sequence $\br{\beta_n}_{n\in \mathbb{N}}$ is bounded, and specifically one can find a common bound for all $b$. Since $K$ is also compact, this shows that the distance from $\bar{h}_n x_0$ to $a_t u_{\pi\br{b}} x_0$ is bounded by a number independent of $n$, $b$ and clearly of $x_0$ as evident from the definition (Definition \ref{def:bdd_dist}). Since the sequence $\br{a_{t_n}}_{n \in \mathbb{N}}$ has bounded gaps in $\br{a_t}_{t \ge 0}$, we have the required result.
    \end{proof}
}
\section{The Main Computation}\label{sec:main_computation}
{
    We shall now translate the bound on the rate of escape for the diagonal flow from Lemma \ref{lemma:r_bound} into a Khinchine style result, using the Dani-Correspondence \cite{km99} (chapter 8). Notice a slight difference in notation: in \cite{km99} the approximation is done for matrices, where here we handle vectors, i.e. $n=1, m=d$ in their notation. Additionally, when testing against $\psi$, \cite{km99} raise the norm by a power of $m$. We shall therefore reformulate the result in our notation:
    \begin{lemma}\label{lemma:dani}
        Fix some $x_0 > 0$. Let $\psi:\segco{x_0, \infty} \rightarrow \br{0, \infty}$ be a non-increasing continuous function. Then there exists a unique continuous function $r:\segco{t_0, \infty} \rightarrow \mathbb{R}$, where $t_0 = \frac{d}{d+1}\log x_0 - \frac{1}{d+1}\log\psi\br{x_0}$ such that
        \begin{enumerate}
            \item\label{item:dani1} the function $t \mapsto t - r\br{t}$ is strictly increasing and tends to $\infty$ as $t \rightarrow \infty$.
            \item\label{item:dani2} the function $t \mapsto \frac{t}{d} + r\br{t}$ is nondecreasing.
            \item\label{item:dani3} $\psi\br{e^{t - r\br{t}}} = e^{-\frac{t}{d} - r\br{t}}$ for all $t \ge t_0$.
        \end{enumerate}
        Conversely, given $t_0 \in \mathbb{R}$ and a continuous function $r:\segco{t_0, \infty} \rightarrow \mathbb{R}$ such that $\br{\ref{item:dani1}} \& \br{\ref{item:dani2}}$ hold, there exists a continuous non-increasing function $\psi : \segco{x_0, \infty} \rightarrow \br{0,\infty}$, with $x_0 = e^{t_0 - nr\br{t_0}}$ satisfying $\br{\ref{item:dani3}}$. Furthermore, for every non negative integer $q$,
        \begin{equation}
        	I_1 := \int_{x_0}^{\infty} \br{\log x}^q \psi\br{x} \D x < \infty \quad \Leftrightarrow \quad
        	I_2 := \int_{t_0}^{\infty} t^q e^{-\br{d+1}r\br{t}} \D t < \infty.
        \end{equation}
    \end{lemma}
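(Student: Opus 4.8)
The plan is to prove Lemma~\ref{lemma:dani} by the explicit reparametrization underlying the Dani correspondence, following \cite{km99} (Chapter 8) and translating into the present conventions; the power of $m=d$ by which \cite{km99} rescales $\psi$ when testing approximability is absorbed into the precise constants (the coefficient in $t_0$ and the exponent in $I_2$), so below I describe the structure without pinning down that bookkeeping. Everything is driven by one change of variables. Introduce
\begin{equation}
    v(t) := t - r(t), \qquad w(t) := \tfrac{t}{d} + r(t).
\end{equation}
Then $\br{t, r(t)}$ and $\br{v(t), w(t)}$ determine one another linearly, $t = \tfrac{d}{d+1}\br{v+w}$ and $r = \tfrac{dw-v}{d+1}$, and the three conditions read transparently: \br{\ref{item:dani1}} and \br{\ref{item:dani2}} say that $t\mapsto v(t)$ is continuous, strictly increasing and tends to $\infty$ while $t\mapsto w(t)$ is continuous and nondecreasing, and \br{\ref{item:dani3}} says $\psi\br{e^{v(t)}} = e^{-w(t)}$, i.e.\ $w$ is pinned down by $v$ through $\psi$.

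Existence and uniqueness I would treat together. Put $\Psi\br{v} := -\log\psi\br{e^{v}}$ on $\segco{\log x_0,\infty}$; since $\psi$ is continuous, positive and non-increasing, $\Psi$ is continuous and nondecreasing. Eliminating $w$ from $t = \tfrac{d}{d+1}\br{v+w}$ via $w = \Psi(v)$ (in the precise linear combination dictated by the normalization) exhibits $t$ as a fixed continuous function $G$ of $v$; $G$ is strictly increasing, being a strictly increasing plus a nondecreasing function, hence a homeomorphism of $\segco{\log x_0,\infty}$ onto $\segco{t_0,\infty}$ with $G\br{\log x_0} = t_0$ by a direct computation. Inverting, $v = G^{-1}$ is continuous and strictly increasing; set $r(t) := t - G^{-1}(t)$, which is continuous, and then \br{\ref{item:dani1}}--\br{\ref{item:dani3}} follow by reading the correspondence backwards. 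For uniqueness, if $r_1,r_2$ both satisfy \br{\ref{item:dani1}}--\br{\ref{item:dani3}} then $v_i := t - r_i$ are strictly increasing and, by \br{\ref{item:dani3}}, both solve $v_i(t) + \Psi\br{v_i(t)} = (\textrm{const})\cdot t$, the same equation; injectivity of $v\mapsto v + \Psi(v)$ forces $v_1\equiv v_2$, hence $r_1\equiv r_2$, and the common domain $\segco{t_0,\infty}$ is determined by the normalization $e^{t_0 - r(t_0)} = x_0$.

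The converse is this construction read in the other direction. Given $t_0$ and a continuous $r$ obeying \br{\ref{item:dani1}}--\br{\ref{item:dani2}}, condition \br{\ref{item:dani1}} makes $t\mapsto t - r(t)$ a continuous, strictly increasing bijection of $\segco{t_0,\infty}$ onto $\segco{\log x_0,\infty}$ with $x_0 = e^{t_0 - r(t_0)}$; invert it, and for $x\ge x_0$ define $\psi\br{x} := e^{-w(t)}$ where $t$ is the preimage of $\log x$. This $\psi$ is continuous, satisfies \br{\ref{item:dani3}} by construction, and is non-increasing because $x\mapsto\log x\mapsto t$ is increasing while $t\mapsto w(t) = \tfrac{t}{d} + r(t)$ is nondecreasing by \br{\ref{item:dani2}}, so $x\mapsto e^{-w(t(x))}$ decreases.

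Finally, for the integral equivalence I would substitute $x = \phi(t) := e^{t-r(t)}$, which by \br{\ref{item:dani1}} is an increasing homeomorphism of $\segco{t_0,\infty}$ onto $\segco{x_0,\infty}$. Then $I_1 = \int_{t_0}^\infty \br{t-r(t)}^q\, \psi\br{\phi(t)}\,\D\phi(t)$; using \br{\ref{item:dani3}} to replace $\psi\br{\phi(t)}$ by $e^{-t/d - r(t)}$ and computing $\D\phi(t) = e^{t-r(t)}\,\D\br{t-r(t)}$ turns $I_1$ into an explicit integral in $t$ against the monotone differential $\D\br{t-r(t)}$. On the range of integration $\br{t-r(t)}^q$ is comparable to $t^q$ up to bounded factors --- bounded below by a constant by \br{\ref{item:dani1}}, and above by $\tfrac{d+1}{d}t + O(1)$ by \br{\ref{item:dani2}} --- and a direct computation regrouping the exponentials and the differential shows $I_1$ is finite exactly when $I_2 = \int_{t_0}^\infty t^q e^{-(d+1)r(t)}\,\D t$ is. The step I expect to demand the most care is legitimizing this change of variables when $r$ is only assumed continuous: from \br{\ref{item:dani1}} and \br{\ref{item:dani2}}, $r(t) = \tfrac{d}{d+1}\br{\tfrac{t}{d} + r(t)} - \tfrac{1}{d+1}\br{t - r(t)}$ is a difference of two nondecreasing functions, so $r$ is locally of bounded variation, $\phi$ is increasing with a well-defined Lebesgue--Stieltjes differential, $r'$ exists almost everywhere, and the possible singular part of $\D\br{t-r(t)}$ only reinforces one of the two implications while a separate monotonicity comparison handles the other; this is precisely the argument of \cite{km99} (Chapter 8).
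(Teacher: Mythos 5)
The paper itself gives no proof of this lemma --- it is presented as a reformulation of Lemma 8.3 of \cite{km99} and the citation is the proof --- so your reconstruction is measured against the source. Your treatment of existence, uniqueness and the converse is correct and is precisely the standard argument: pass to $v = t - r\br{t}$, $w = \frac{t}{d} + r\br{t}$, note that $\Psi\br{v} = -\log\psi\br{e^v}$ is continuous and nondecreasing, and invert the strictly increasing map $G\br{v} = \frac{d}{d+1}\br{v + \Psi\br{v}}$. That part stands.

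However, the ``bookkeeping'' you explicitly decline to pin down is exactly where the statement and your argument part ways, and it cannot be absorbed after the fact. First, your own construction gives $t_0 = G\br{\log x_0} = \frac{d}{d+1}\br{\log x_0 - \log\psi\br{x_0}}$, which matches the displayed $t_0$ only when $d=1$; this is a sign that the reformulation from \cite{km99} (where condition (3) reads $\psi\br{e^{t-r\br{t}}} = e^{-t-dr\br{t}}$, i.e.\ the present $\psi$ is the $d$-th root of theirs) has not been carried through consistently. Second, and more seriously, the ``direct computation'' you invoke for the integral equivalence does not close. Substituting $x = e^{t-r\br{t}}$ and using (3) as stated gives $I_1 = \int_{t_0}^{\infty}\br{t-r\br{t}}^q\, e^{-t/d - r\br{t}}\,e^{t - r\br{t}}\,\D\br{t-r\br{t}}$, whose exponential factor is $e^{\frac{d-1}{d}t - 2r\br{t}}$, not $e^{-\br{d+1}r\br{t}}$; the $t$-terms cancel only if $I_1$ carries $\psi^d\br{x}$ in place of $\psi\br{x}$. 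Indeed the stated equivalence is false for $d\ge 2$: take $r\br{t}=\epsilon t$ with $0<\epsilon<\frac{d-1}{2d}$, so that (1) and (2) hold, $\psi\br{x} = x^{-\br{1/d+\epsilon}/\br{1-\epsilon}}$ with exponent of modulus less than $1$, hence $I_1=\infty$, while $I_2 = \int t^q e^{-\br{d+1}\epsilon t}\D t<\infty$. So the step ``a direct computation \ldots shows $I_1$ is finite exactly when $I_2$ is'' would fail; to repair it you must either prove the lemma with $\psi^d$ in $I_1$ (the faithful translation of \cite{km99}) or observe that the ``Furthermore'' clause is never used --- in the proof of Theorem \ref{theorem:main} the author reruns this change of variables for the weight $x^{\alpha/d-1}\psi^{\alpha}\br{x}$, where the exponentials do cancel to $e^{-\frac{\alpha\br{d+1}}{d}r\br{t}}$. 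Your closing remarks on bounded variation of $r$ and the Lebesgue--Stieltjes differential are the right concerns for the version that is actually true.
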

    \begin{theorem}\label{theorem:dani}(Dani Correspondence)
    	Let $\psi : \mathbb{N} \rightarrow \br{0, \infty}$, and $l$ as in $\br{\ref{eq:l_def}}$. Then $\mathbf{x} \in \mathbb{R}^d$ is $\psi$-approximable iff there exist arbitrarily large positive $t$ such that
    	\begin{equation}
    		l\br{a_t u_{\mathbf{x}}} \ge r\br{t},
    	\end{equation}
    	where $r$ is as in Lemma \ref{lemma:dani}.
    \end{theorem}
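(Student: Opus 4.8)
The plan is to deduce Theorem \ref{theorem:dani} from the quantitative Dani correspondence of Kleinbock--Margulis as presented in \cite{km99} (chapter 8), specialized to the case of a single vector, i.e. $n=1$, $m=d$ in their notation. First I would recall the exact statement there: $\mathbf{x}\in\mathbb{R}^d$ is $\psi$-approximable, meaning $\norm{\mathbf{x}-\mathbf{p}/q}_\infty < \psi(q)/q$ for infinitely many $q\in\mathbb{N}$ and appropriate $\mathbf{p}\in\mathbb{Z}^d$, exactly when the lattice $a_t u_{\mathbf{x}}\mathbb{Z}^{d+1}$ contains a nonzero vector of $\sup$-norm at most $e^{-r(t)}$ for arbitrarily large $t$, where $r$ is the function attached to $\psi$ by Lemma \ref{lemma:dani}. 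The content of this is a two-sided comparison: a nonzero integer point $(q,\mathbf{p})$ with $q$ in a suitable dyadic-type range and $\norm{q\mathbf{x}-\mathbf{p}}$ small corresponds, after applying $a_t$ for the $t$ that balances the two scales, to a short vector in the translated lattice; conversely a short vector in the lattice, read off in coordinates, produces such an integer solution. The choice of parameter change is precisely item \ref{item:dani3} of Lemma \ref{lemma:dani}: writing $q\approx e^{t-r(t)}$ and $\norm{q\mathbf{x}-\mathbf{p}}\approx e^{-\tfrac{t}{d}-r(t)}$, condition $\psi(e^{t-r(t)})=e^{-\tfrac{t}{d}-r(t)}$ is exactly what makes $\norm{\mathbf{x}-\mathbf{p}/q}_\infty<\psi(q)/q$ equivalent to the vector being short.

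Concretely, I would argue as follows. Recall $a_t u_{\mathbf{x}}$ acts on $\mathbb{Z}^{d+1}$: a vector $(q,\mathbf{p})\in\mathbb{Z}^{d+1}$ is sent to $(e^{t}(q), e^{-t/d}(\mathbf{p}-q\mathbf{x}))$ up to the sign conventions of the paper (the first coordinate scaled by $e^t$, the last $d$ coordinates by $e^{-t/d}$ after the unipotent shear by $\mathbf{x}$). So $\norm{a_t u_{\mathbf{x}}(q,\mathbf{p})}_\infty = \max\{e^t|q|,\ e^{-t/d}\norm{\mathbf{p}-q\mathbf{x}}_\infty\}$. The condition $l(a_t u_{\mathbf{x}})\ge r(t)$, i.e. $\Delta(a_t u_{\mathbf{x}})\le e^{-r(t)}$ by the definition \eqref{eq:l_def} of $l$, says there is a nonzero $(q,\mathbf{p})$ with both $e^t|q|\le e^{-r(t)}$ and $e^{-t/d}\norm{\mathbf{p}-q\mathbf{x}}_\infty\le e^{-r(t)}$. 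Given such a $t$ and point, set $q' = |q|$; from the first inequality $q'\le e^{-t-r(t)}$, and since the function $t-r(t)\to\infty$ is increasing (item \ref{item:dani1}) one checks $q'$ ranges over unbounded values as $t$ does, and from the second inequality together with item \ref{item:dani3} one gets $\norm{\mathbf{x}-\mathbf{p}/q'}_\infty \le e^{-t/d-r(t)}/q' $, which by monotonicity (items \ref{item:dani1}, \ref{item:dani2}) is bounded by $\psi(q')/q'$; hence $\mathbf{x}$ is $\psi$-approximable. Conversely, given infinitely many $q$ with $\norm{q\mathbf{x}-\mathbf{p}}_\infty<\psi(q)$, for each such $q$ solve for the $t$ with $e^{t-r(t)} = q$ (uniquely possible since $t\mapsto t-r(t)$ is a strictly increasing continuous bijection onto an unbounded interval, by item \ref{item:dani1}); then $e^t|q| = e^{t}\cdot e^{t-r(t)}$... more cleanly, $e^t q = e^{2t-r(t)}$ and one arranges the bookkeeping so that both scaled coordinates are $\le e^{-r(t)}$, using item \ref{item:dani3} to convert the bound $\psi(q)$ on $\norm{q\mathbf{x}-\mathbf{p}}_\infty$ into the bound $e^{-t/d-r(t)}$; these $t$ are arbitrarily large because $q\to\infty$. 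That gives $l(a_t u_{\mathbf{x}})\ge r(t)$ for arbitrarily large $t$.

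The routine part is the coordinate bookkeeping in the two scalings and keeping the sup-norm conventions consistent with the matrix forms of $a_t$ and $u_\alpha$ fixed in Notation \ref{subgroups} (where the vector is a row vector and matrices act on the right); the only genuine subtlety is handling the ``arbitrarily large $t$'' quantifier in both directions and making sure no short vectors are lost or spuriously introduced at the boundary ranges of $q$. For the latter, the standard device — which I would invoke from \cite{km99} rather than reprove — is that one may restrict attention to primitive integer vectors and to the optimal $\mathbf{p}$ for each $q$, so that the correspondence $q\leftrightarrow t$ is essentially a bijection between the relevant solutions and the relevant times. The main obstacle, such as it is, is therefore purely expository: translating the \cite{km99} statement, which is phrased for $m\times n$ matrix approximation with their normalization raising the target norm to a power, into the $n=1$, $m=d$, single-vector normalization used here, and matching it to the specific function $r$ produced by Lemma \ref{lemma:dani}. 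Once that dictionary is set up, Theorem \ref{theorem:dani} is a direct restatement.
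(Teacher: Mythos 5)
Your strategy is exactly the paper's: the paper offers no independent proof of Theorem \ref{theorem:dani} at all, but states it as the reformulation of the Dani correspondence from \cite{km99} (chapter 8) in the single-vector case $n=1$, $m=d$ with the normalization adjusted, which is precisely what you propose, down to matching the function $r$ of Lemma \ref{lemma:dani}. One concrete flaw in your ``concrete'' paragraph, though: your scaling directions are internally inconsistent. From $e^{t}\abs{q}\le e^{-r\br{t}}$ you deduce $\abs{q}\le e^{-t-r\br{t}}$, which tends to $0$ and would force $q=0$, yet two lines later you claim $q'$ ranges over unbounded values; and this contradicts your own (correct) heuristic $q\approx e^{t-r\br{t}}$ coming from item \ref{item:dani3}. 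The same problem resurfaces in the converse direction, where you write $e^{t}q=e^{2t-r\br{t}}$ and then need it to be $\le e^{-r\br{t}}$, which fails for $t>0$. The fix is only a matter of conventions --- the coordinate of the lattice vector carrying $q$ must be the one contracted by the flow (equivalently, one must match the paper's $a_t$, the row-vector right action of Notation \ref{subgroups}, and the sign of $t$ against the $m\times n$ setup of \cite{km99}) --- and since you ultimately invoke \cite{km99} rather than reprove it, this is a bookkeeping slip rather than a missing idea; but as written the forward implication does not go through.
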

    \begin{corollary}\label{cor:result_for_r}
    	Let $0 < \gamma < \frac{\varpi\br{d+1}}{d}$, $r: \mathbb{N} \rightarrow \mathbb{R}$ monotone increasing such that:
        \begin{equation}
            \sum_{t=1}^{\infty} \exp\br{-\gamma r\br{t}} < \infty
        \end{equation}
        and $\psi$ be the corresponding function from Lemma \ref{lemma:dani}. Then $\mu_\mathcal{K}$-a.e. $\mathbf{x} \in \mathcal{K}$ is not $\psi$-approximable.
    \end{corollary}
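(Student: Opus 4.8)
The plan is to read Corollary \ref{cor:result_for_r} off directly from Lemma \ref{lemma:r_bound} and the Dani Correspondence (Theorem \ref{theorem:dani}); all of the analytic content has already been done, so what remains is packaging. Fix the base point $x = e\Gamma \in X$. Then for $b \in B$ and $\mathbf{x} = \pi\br{b} \in \mathcal{K}$, the quantity $l\br{a_t u_{\pi\br{b}} x}$ of Lemma \ref{lemma:r_bound} is exactly the quantity $l\br{a_t u_{\mathbf{x}}}$ appearing in Theorem \ref{theorem:dani}, since $l$ depends only on the point $a_t u_{\mathbf{x}}\Gamma \in X$.

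First I would apply Lemma \ref{lemma:r_bound}, but to the shifted function $\tilde r\br{t} = r\br{t} - 1$ rather than to $r$ itself. This $\tilde r$ is still monotone increasing, and $\sum_{t=1}^{\infty}\exp\br{-\gamma \tilde r\br{t}} = e^{\gamma}\sum_{t=1}^{\infty}\exp\br{-\gamma r\br{t}} < \infty$, so the hypotheses of Lemma \ref{lemma:r_bound} hold with the same $\gamma < \frac{\varpi\br{d+1}}{d}$. The lemma then yields, for $\beta$-a.e.\ $b \in B$, a threshold $t_0 = t_0\br{b}$ with $l\br{a_t u_{\pi\br{b}} x} \le \tilde r\br{t} < r\br{t}$ for every $t \ge t_0$. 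Hence for such $b$ there are \emph{not} arbitrarily large $t$ with $l\br{a_t u_{\pi\br{b}} x} \ge r\br{t}$, so by Theorem \ref{theorem:dani} the point $\pi\br{b}$ is not $\psi$-approximable, where $\psi$ is the function associated to the original $r$ by Lemma \ref{lemma:dani} (the shifted $\tilde r$ is used only to produce the dynamical bound, never to form a $\psi$).

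It remains to transfer ``for $\beta$-a.e.\ $b$'' to ``for $\mu_{\mathcal{K}}$-a.e.\ $\mathbf{x}$''. The set $W \subseteq \mathbb{R}^d$ of $\psi$-approximable points is Borel, being $\bigcap_{N\ge 1}\bigcup_{q \ge N}\bigcup_{\mathbf{p}\in\mathbb{Z}^d}\set{\mathbf{x} : \norm{\mathbf{x} - \frac{\mathbf{p}}{q}}_\infty < \frac{\psi\br{q}}{q}}$, a countable intersection of open sets. By the previous paragraph $\pi^{-1}\br{W}$ is $\beta$-null, and since $\mu_{\mathcal{K}} = \pi_{*}\beta$ by Definition \ref{def:bernoulli}, we get $\mu_{\mathcal{K}}\br{W} = \beta\br{\pi^{-1}\br{W}} = 0$. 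Thus $\mu_{\mathcal{K}}$-a.e.\ $\mathbf{x} \in \mathcal{K}$ is not $\psi$-approximable, as claimed.

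I do not expect a genuine obstacle here: the points needing care are (i) matching the non-strict bound $l \le \tilde r$ produced by Lemma \ref{lemma:r_bound} with the direction of the inequality in Theorem \ref{theorem:dani} — which is precisely why one works with $r-1$ rather than $r$; (ii) some regularity and domain bookkeeping, since Lemma \ref{lemma:dani} and Theorem \ref{theorem:dani} are phrased for a continuous $r$ on $\segco{t_0,\infty}$ whereas here $r$ is given on $\mathbb{N}$, so one should fix a continuous non-decreasing extension and observe that neither the summability hypothesis nor the conclusion is affected; and (iii) the measurability of $W$ together with the pushforward identity $\mu_{\mathcal{K}} = \pi_{*}\beta$. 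Of these, (ii) is the only slightly fiddly step, and it is pure bookkeeping.
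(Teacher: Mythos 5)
Your proposal is correct and follows essentially the same route as the paper: apply Lemma \ref{lemma:r_bound} at the identity coset, invoke Theorem \ref{theorem:dani} to rule out $\psi$-approximability, and transfer the null set via $\mu_{\mathcal{K}} = \pi_{*}\beta$. Your extra step of working with $\tilde r = r - 1$ to convert the non-strict bound $l \le r\br{t}$ into a strict one is a refinement the paper's proof silently elides (Theorem \ref{theorem:dani} uses the non-strict inequality $l \ge r\br{t}$, so the boundary case does need this or a similar fix), and your measurability and extension remarks are harmless bookkeeping.
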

    \begin{proof}
    	By Lemma \ref{lemma:r_bound}, taking $x \in X$ to be the identity, we get that for $\beta$-a.e. $b \in B$, there exists some $t_0$ such that for all $t \ge t_0$:
        \begin{equation}
            l\br{a_t u_{\pi \br{b}}} = l\br{a_t u_{\pi \br{b}} x} \le r\br{t},
        \end{equation}
        and so we can immediately conclude that $\pi\br{b}$ is not $\psi$-approximable by Theorem \ref{theorem:dani}. As $\mu_\mathcal{K}$ is the pushforward of $\beta$ under the coding map (Definition \ref{def:bernoulli}), we get the desired result.
    \end{proof}
    We would like to translate this requirement into a requirement on $\psi$ rather than $r$. This could be done by passing to integrals (which can be done as these functions are monotonic) and changing variables.
    \begin{proof}[Proof of Theorem \ref{theorem:main}]
    	First we shall extend $\psi$ into a continuous differentiable monotone decreasing function on $\mathbb{R}$. As $\psi$ is monotonic, we have that:
    	\begin{equation}
    		\sum_{x=1}^{\infty} x^{\frac{\alpha}{d}-1}\psi^{\alpha}\br{x} \D x < \infty
    		\quad \Leftrightarrow \quad
    		\int_{x_0}^{\infty} x^{\frac{\alpha}{d}-1}\psi^{\alpha}\br{x} \D x < \infty.
    	\end{equation}
    	Let $r, x_0$ are as in Lemma \ref{lemma:dani}. We shall use the following change of variables:
    	\begin{equation}
    		x\br{t} = e^{t-r\br{t}},
    	\end{equation}
    	which also gives us by Lemma \ref{lemma:dani} item \ref{item:dani3} that:
    	\begin{equation}
    		\psi\br{x\br{t}} = e^{-\frac{t}{d}-r\br{t}}.
    	\end{equation}
    	Notice that:
    	\begin{equation}
    		\frac{\D}{\D t} x\br{t} = \br{1-r'\br{t}} e^{t-r\br{t}}.
    	\end{equation}
    	So, by changing variables, we get that:
    	\begin{equation}\label{eq:changing_x_to_t}
    		\int_{x_0}^{\infty} x^{\frac{\alpha}{d}-1}\psi^{\alpha}\br{x} \D x < \infty
    		\quad \Leftrightarrow \quad
    		\int_{t_0}^{\infty} x\br{t}^{\frac{\alpha}{d}-1}\psi^{\alpha}\br{x\br{t}} x'\br{t} \D t < \infty.
    	\end{equation}
    	We shall now simplify integral in RHS of (\ref{eq:changing_x_to_t}):
    	\begin{multline}
    		\int_{t_0}^{\infty} x\br{t}^{\frac{\alpha}{d}-1}\psi^{\alpha}\br{x\br{t}} x'\br{t} \D t = 
    		\int_{t_0}^{\infty} e^{\br{t-r\br{t}}\br{\frac{\alpha}{d}-1}} e^{\alpha\br{-\frac{t}{d}-r\br{t}}} \br{1-r'\br{t}} e^{t-r\br{t}} \D t =\\=
    		\int_{t_0}^{\infty} \br{1-r'\br{t}} \exp\br{\br{t-r\br{t}}\br{\frac{\alpha}{d}-1} + \alpha\br{-\frac{t}{d}-r\br{t}} + t-r\br{t}} \D t =\\=
    		\int_{t_0}^{\infty} \br{1-r'\br{t}} \exp\br{\frac{\alpha}{d}t - t - \frac{\alpha}{d}r\br{t} + r\br{t} - \frac{\alpha}{d}t - \alpha r\br{t} + t - r\br{t}} \D t =\\=
    		\int_{t_0}^{\infty} \br{1-r'\br{t}} \exp\br{- \frac{\alpha}{d}r\br{t} - \alpha r\br{t}} \D t = 
    		\int_{t_0}^{\infty} \br{1-r'\br{t}} \exp\br{- \frac{\alpha\br{d+1}}{d}r\br{t}} \D t =\\=
    		\int_{t_0}^{\infty} \exp\br{- \frac{\alpha\br{d+1}}{d}r\br{t}} - r'\br{t} \exp\br{- \frac{\alpha\br{d+1}}{d}r\br{t}} \D t =\\=
    		\int_{t_0}^{\infty} \exp\br{- \frac{\alpha\br{d+1}}{d}r\br{t}} \D t + \left[ \frac{d}{\alpha\br{d+1}} \exp\br{- \frac{\alpha\br{d+1}}{d}r\br{t}} \right]_{t=t_0}^{\infty}.
    	\end{multline}
    	As the second term in the last line is finite (because $r$ is monotone increasing), we have that:
    	\begin{equation}
    		\int_{t_0}^{\infty} x\br{t}^{\frac{\alpha}{d}-1}\psi^{\alpha}\br{x\br{t}} x'\br{t} \D t < \infty
    		\quad \Leftrightarrow \quad
    		\int_{t_0}^{\infty} \exp\br{- \frac{\alpha\br{d+1}}{d}r\br{t}} \D t < \infty.
    	\end{equation}
    	Notice that as $0 < \alpha < \varpi$, we get that $0 < \frac{\alpha\br{d+1}}{d} < \frac{\varpi\br{d+1}}{d}$, and so $r$ satisfies the requirement of Corollary \ref{cor:result_for_r}, which gives us the required result, i.e. that $\mu_\mathcal{K}$-a.e. $\mathbf{x} \in \mathcal{K}$ is not $\psi$-approximable.
    \end{proof}  
}
\section{Calculation of the Constants for Cantor Set Products}
{
    In this section, for our fractal $\mathcal{K}$, we shall be considering a very specific measure - the Hausdorff measure. Let $s = \operatorname{dim}_H \br{\mathcal{K}}$, the measure we will choose is $\mu_{\mathcal{K}}$, the restriction to $\mathcal{K}$ of the $s$-dimensional Hausdorff measure.
    \par In his paper \cite{khalil}, Khalil shows that for $\mathcal{K} = \mathcal{C} \times \mathcal{C}$, the constant $\alpha_1 = \frac{\log 2}{\log 3}$, which in turn gives the result that $\varpi = \frac{2\log 2}{\log 3}$ (as one can easily show $\alpha_2 = \frac{2\log 2}{\log 3}$ as well).
    We shall prove the more general result, Lemma \ref{lemma:consts}.
    \begin{notation}
        Let $d \ge 1$. Consider the IFS $\mathcal{F}$ on $\mathbb{R}^d$ given by the maps of the form:
        \begin{equation}
            h_v\br{\mathbf{x}} = \frac{\mathbf{x} + v}{3}, \quad\quad v\in E := \set{0,2}^d.
        \end{equation}
        The limit set $\mathcal{K}$ coincides with the product of cantor sets $\mathcal{C}^d$. Let $\mu$ be the measure on $\mathcal{K}$ derived from the Hausdorff measure (recall Example \ref{example:hausdorff}).
    \end{notation}
    
    First we shall prove that:
    \begin{lemma}
        For $\mathcal{K} = \mathcal{C}^d$ as before, $\alpha_l\br{\mu} \ge l \frac{\log 2}{\log 3}$.
    \end{lemma}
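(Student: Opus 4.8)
The plan is to estimate the $\mu$-measure of an $\varepsilon$-neighborhood of an arbitrary affine subspace $\mathcal{L}$ of codimension $l$ (equivalently, of dimension $d-l$) and show that this measure is at most a constant times $\varepsilon^{l\log 2/\log 3}$, which immediately yields $\alpha_l(\mu)\ge l\frac{\log 2}{\log 3}$ from the definition \eqref{def:alpha}. The product structure $\mathcal{K}=\mathcal{C}^d$ and the product structure of the Hausdorff measure $\mu=\mu_\mathcal{C}^{\otimes d}$ (where $\mu_\mathcal{C}$ is the $\frac{\log 2}{\log 3}$-dimensional Hausdorff measure on the middle-thirds Cantor set) are the key tools; recall that $\dim_H\mathcal{C}^d = d\frac{\log 2}{\log 3}$ and that $\mu_\mathcal{C}$ is Ahlfors regular of that dimension, i.e. $\mu_\mathcal{C}(B(x,\rho))\asymp \rho^{\log 2/\log 3}$ for $x\in\mathcal{C}$.

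First I would reduce to the case of an affine subspace in ``coordinate position''. Since $\mu$ is $\mathrm{O}_d$-invariant only in a very weak sense (it is not rotation-invariant), one cannot simply rotate $\mathcal{L}$ into a coordinate subspace; instead, given $\mathcal{L}\in\mathcal{A}(d,d-l)$ I would pick $l$ coordinate directions $e_{i_1},\dots,e_{i_l}$ such that the orthogonal projection of $\mathcal{L}$ onto the span of these directions is still ``large'' — concretely, such that the unit normal vectors to $\mathcal{L}$ have a substantial component in $\mathrm{span}(e_{i_1},\dots,e_{i_l})$. A cleaner route: the orthogonal complement $\mathcal{L}^\perp$ (as a linear subspace) has dimension $l$, so there exist $l$ coordinate axes whose projection onto $\mathcal{L}^\perp$ spans $\mathcal{L}^\perp$; after relabeling, the map sending a point of $\mathcal{L}^{(\varepsilon)}$ to its $(x_{i_1},\dots,x_{i_l})$-coordinates confines those coordinates to a $C\varepsilon$-neighborhood of an affine subspace of $\mathbb{R}^l$ of dimension at most $l-1$ (in fact the image is within $C\varepsilon$ of a hyperplane or lower), where $C$ depends only on $d$ and not on $\mathcal{L}$. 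Then I would bound
\begin{equation}
\mu\big(\mathcal{L}^{(\varepsilon)}\big) \;\le\; \big(\mu_\mathcal{C}^{\otimes l}\big)\big(\{(x_{i_1},\dots,x_{i_l}): \mathrm{dist}(x,\mathcal{H}) < C\varepsilon\}\big),
\end{equation}
where $\mathcal{H}\subseteq\mathbb{R}^l$ is an affine hyperplane, using that the other $d-l$ coordinates contribute a factor at most $1$.

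The heart of the argument is then the one-codimensional case in $\mathbb{R}^l$: bounding $\mu_\mathcal{C}^{\otimes l}(\mathcal{H}^{(\varepsilon)})$ for an affine hyperplane $\mathcal{H}$ by $C'\varepsilon^{l\log 2/\log 3}$. Here I would cover $\mathcal{C}^l$ by the $2^{ln}$ basic cubes of side $3^{-n}$ with $3^{-n}\approx\varepsilon$; a hyperplane meets at most $O(2^{(l-1)n})$ of these cubes (a codimension-one object in a grid of $2^n$ steps per axis), and each such cube carries $\mu$-mass $2^{-ln}$, giving $\mu_\mathcal{C}^{\otimes l}(\mathcal{H}^{(\varepsilon)}) \lesssim 2^{(l-1)n}\cdot 2^{-ln} = 2^{-n} \asymp \varepsilon^{\log 2/\log 3}$ — but this only gives exponent $\frac{\log 2}{\log 3}$, not $l\frac{\log 2}{\log 3}$, so the naive count is too lossy. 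The fix is that a hyperplane actually meets far fewer Cantor cubes: the right count is the number of \emph{level-$n$ Cantor cubes} (not all grid cubes) within $\varepsilon$ of $\mathcal{H}$, and because the Cantor set is sparse, along most of the $l$ axes the interaction with $\mathcal{H}$ forces the mass into a lower-dimensional piece. I expect the main obstacle to be making this counting argument uniform over all hyperplanes $\mathcal{H}$ (the orientation can be nearly parallel to a coordinate hyperplane, which is the worst case and is exactly where one needs the coordinate-selection step above to have chosen a genuinely ``transverse'' set of axes). I would handle this by an inductive argument on $l$: slice $\mathbb{R}^l = \mathbb{R}\times\mathbb{R}^{l-1}$ along the axis in which $\mathcal{H}$ is most transverse, observe that for each of the $2^n$ level-$n$ Cantor intervals in the first coordinate the slice of $\mathcal{H}^{(\varepsilon)}$ is contained in a $(C\varepsilon)$-neighborhood of an affine subspace of $\mathbb{R}^{l-1}$ of dimension $l-2$, apply the inductive hypothesis (bound $\varepsilon^{(l-1)\log2/\log3}$) to each slice, and sum over the $2^n\asymp\varepsilon^{-\log2/\log3}$ intervals, picking up exactly one extra factor $\varepsilon^{\log2/\log3}$ — wait, this gives $2^n\cdot\varepsilon^{(l-1)\log2/\log3}\asymp\varepsilon^{-\log2/\log3}\cdot\varepsilon^{(l-1)\log2/\log3}=\varepsilon^{(l-2)\log2/\log3}$, still off; the correct accounting is that only $O(2^{n})\cdot$(fraction $\varepsilon^{?}$) of the intervals are actually relevant, or rather that the first-coordinate slices through which $\mathcal{H}^{(\varepsilon)}$ passes with full $(l-1)$-dimensional measure number only $O(1)$, while the generic slice contributes mass $\varepsilon^{(l-1)\log 2/\log 3}$ — so one should instead sum $\mu_\mathcal{C}(\text{interval})\times\mu_\mathcal{C}^{\otimes(l-1)}(\text{slice})$ with the interval-mass $2^{-n}\asymp\varepsilon^{\log2/\log3}$ and the slice being an $\varepsilon$-neighborhood of a codimension-one subspace of $\mathbb{R}^{l-1}$, inductively bounded by $\varepsilon^{(l-1)\log2/\log3}$, for the $O(2^n)$ relevant intervals, the product summing to $\varepsilon^{\log2/\log3}\cdot\varepsilon^{(l-1)\log2/\log3}\cdot(\#\text{relevant}/2^n)$; since essentially all $2^n$ intervals can be relevant in the worst orientation, the bound one actually obtains is $\varepsilon^{(l-1)\log 2/\log 3}$ per the dominant slice plus a summable tail — in short, the clean statement to prove inductively is precisely $\mu_\mathcal{C}^{\otimes l}(\mathcal{H}^{(\varepsilon)})\le C_l\,\varepsilon^{\log 2/\log 3}$ for a hyperplane, which combined with the coordinate reduction and the observation that a codimension-$l$ subspace forces $l$ \emph{independent} such one-codimensional constraints (one per selected transverse axis, which can be arranged to be ``decoupled'' via the product structure) yields the product bound $\varepsilon^{l\log 2/\log 3}$. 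The genuinely delicate point, which I would spend the most care on, is justifying that the $l$ transverse directions can be chosen so that the constraints multiply rather than merely add — this is where irreducibility of the subspace position and the product structure of $\mu$ combine, and it is the crux of getting the exponent $l\frac{\log 2}{\log 3}$ rather than just $\frac{\log 2}{\log 3}$.
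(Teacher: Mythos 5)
Your proposal and the paper's proof are doing fundamentally different things. The paper's argument is a two-line computation for the single coordinate subspace $\mathcal{L}_0 = \mathbb{R}^{d-l}\times\set{0}^l$: a point of $\mathcal{C}^d$ within $\varepsilon\in\segoc{3^{-(n+1)},3^{-n}}$ of $\mathcal{L}_0$ must have its last $l$ coordinates within $\varepsilon$ of $0$, i.e.\ its coding word in each of those $l$ Cantor factors must begin with roughly $n$ zeros, giving $\mu\br{\mathcal{L}_0^{(\varepsilon)}}\asymp 2^{-ln}\asymp \varepsilon^{l\log 2/\log 3}$. Your proposal, by contrast, sets out to prove a uniform upper bound $\mu\br{\mathcal{L}^{(\varepsilon)}}\lesssim \varepsilon^{l\log 2/\log 3}$ over \emph{all} codimension-$l$ affine $\mathcal{L}$. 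It is worth pointing out that you are right about what the definition of $\alpha_l$ logically requires for the stated $\ge$: since $\log\varepsilon<0$, a lower bound on $\sup_{\mathcal{L}}\mu\br{\mathcal{L}^{(\varepsilon)}}$ coming from one test subspace bounds $\alpha_l$ from \emph{above}, so the paper's single-$\mathcal{L}_0$ computation on its face gives $\alpha_l\le l\log 2/\log 3$; the $\ge$ direction is exactly the uniform covering/transversality estimate you are reaching for (and the covering argument the paper gives for the adjacent lemma is, with the sign read the other way, the $l=1$ case of that estimate).

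That said, your proposal does not close, and you say so. The counting in the middle of your sketch keeps landing on the wrong exponent because it conflates a hyperplane in $\mathbb{R}^l$ --- whose $\varepsilon$-neighborhood has $\mu_\mathcal{C}^{\otimes l}$-mass $\asymp\varepsilon^{\log 2/\log 3}$, a single factor --- with a codimension-$l$ subspace, which should produce $l$ such factors. The crux you flag at the end is the real one, and here is a way to finish it: $\mathcal{L}^\perp$ has dimension $l$, so there exist $l$ coordinate axes $e_{i_1},\dots,e_{i_l}$ whose orthogonal projections onto $\mathcal{L}^\perp$ are a basis, and by compactness of the Grassmannian of $l$-planes in $\mathbb{R}^d$ one can choose them so that the smallest singular value of the projection is $\ge c(d,l)>0$ \emph{uniformly} in $\mathcal{L}$. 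For any fixed values of the remaining $d-l$ coordinates, membership in $\mathcal{L}^{(\varepsilon)}$ then confines $\br{x_{i_1},\dots,x_{i_l}}$ to a ball of radius $O\br{\varepsilon/c(d,l)}$ in $\mathbb{R}^l$, whose $\mu_\mathcal{C}^{\otimes l}$-mass is $O\br{\varepsilon^{l\log 2/\log 3}}$ by Ahlfors regularity of the Cantor measure; Fubini over the other $d-l$ Cantor factors then gives $\mu\br{\mathcal{L}^{(\varepsilon)}}=O\br{\varepsilon^{l\log 2/\log 3}}$. The Grassmannian-compactness uniformity is precisely the step you identified as delicate and did not carry out, and it is what makes the $l$ constraints multiply rather than merely add.
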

    From which we can immediately deduce that:
    \begin{equation}\label{eq:consts:upper_bound}
        \varpi = \min_{1 \le l \le d} \alpha_l\br{d - l + 1} \ge  \min_{1 \le l \le d} \frac{\log 2}{\log 3} l \br{d - l + 1} \ge d \frac{\log 2}{\log 3}.
    \end{equation}
    \begin{proof}
        Fix $\mathcal{L} = C^{d-l} \times \set{0}^{l}$. This is clearly an affine subspace of dimension $l$. Let $n \in \mathbb{N}$, and notice that for every $\varepsilon \in \segoc{\frac{1}{3^{n+1}}, \frac{1}{3^n}}$, $\mathcal{L}^{\br{\varepsilon}}$ the open $\varepsilon$-neighborhood of $\mathcal{L}$ contains all of the points corresponding to words with a prefix of $2^{n+1}$ of elements from $E^{d-l}\times\set{0}^{l}$, and no points outside the set of points corresponding to words with a prefix of $2^{n}$ of elements from $E^{d-l}\times\set{0}^{l}$. Therefore, $\frac{1}{2^{l\br{n+1}}} \le \mu \br{\mathcal{L}^{\br{\varepsilon}}} \le \frac{1}{2^{ln}}$, and therefore:
        \begin{equation}\label{eq:alpha:lower_bound}
            \alpha_l\br{\mu} \ge \liminf\limits_{\varepsilon \rightarrow 0} \frac{
                \log \mu \br{\mathcal{L}^{\br{\varepsilon}}}}
                {\log\br{\varepsilon}} = l \frac{\log 2}{\log 3}.
        \end{equation}
    \end{proof}
    \begin{proof}[Proof of Lemma \ref{lemma:consts}]
        To complete the proof, it suffices to show that $\alpha_1 \le \frac{\log 2}{\log 3}$, as then we have:
        \begin{equation}\label{eq:consts:lower_bound}
            \varpi = \min_{1 \le l \le d} \alpha \br{l}\br{d - l + 1} \le  \alpha_1 d \le d \frac{\log 2}{\log 3}.
        \end{equation}
        Together with (\ref{eq:consts:upper_bound}) we get $\varpi = d \frac{\log 2}{\log 3}$.
    \end{proof}
    \begin{lemma}\label{lemma:consts:upper_bound}
        For $\mathcal{K} = \mathcal{C}^d$ as before, $\alpha_1 \le \frac{\log 2}{\log 3}$.
    \end{lemma}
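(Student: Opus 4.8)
The plan is to bound $\alpha_1$ from above by exhibiting a single affine hyperplane $\mathcal{L}_0 \in \mathcal{A}\br{d, d-1}$ whose $\varepsilon$-neighborhood captures $\mu$-mass of order $\varepsilon^{\log 2/\log 3}$ along a convenient sequence $\varepsilon_n \to 0$. Since $\alpha_1$ is defined through a $\liminf$ over $\varepsilon \to 0$ of a quantity built from a \emph{supremum} over hyperplanes, testing on one sequence with one fixed hyperplane already produces an upper bound.

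First I would record the product structure of the measure: because the law $\mu$ on $E = \set{0,2}^d$ is uniform, the measure $\mu_{\mathcal{K}}$ (the restriction of the $s$-dimensional Hausdorff measure, equivalently by Example \ref{example:hausdorff} the Bernoulli measure with uniform weights) is the $d$-fold product $\mu_{\mathcal{C}}^{\otimes d}$ of the standard self-similar measure $\mu_{\mathcal{C}}$ on the middle-thirds Cantor set $\mathcal{C}$, normalized so that $\mu_{\mathcal{C}}\br{\mathcal{C}} = 1$. In particular each level-$n$ cylinder of $\mathcal{C}$ carries $\mu_{\mathcal{C}}$-mass $2^{-n}$, and singletons have mass $0$.

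Next I would take $\mathcal{L}_0 = \set{0} \times \mathbb{R}^{d-1} \in \mathcal{A}\br{d, d-1}$ and $\varepsilon_n = 3^{-n}$. Then $\mathcal{L}_0^{\br{\varepsilon_n}} \cap \mathcal{K} = \br{\mathcal{C} \cap \br{-3^{-n}, 3^{-n}}} \times \mathcal{C}^{d-1}$, and since $\mathcal{C} \subseteq \seg{0,1}$ with $0 \in \mathcal{C}$, the first factor equals $\br{\mathcal{C} \cap \seg{0, 3^{-n}}} \setminus \set{3^{-n}}$, i.e. the leftmost level-$n$ cylinder of $\mathcal{C}$ with the single endpoint $3^{-n}$ removed; its $\mu_{\mathcal{C}}$-mass is $2^{-n} - 0 = 2^{-n}$. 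Hence $\mu_{\mathcal{K}}\br{\mathcal{L}_0^{\br{\varepsilon_n}}} = 2^{-n}$. Consequently $\sup_{\mathcal{L} \in \mathcal{A}\br{d, d-1}} \mu_{\mathcal{K}}\br{\mathcal{L}^{\br{\varepsilon_n}}} \ge 2^{-n}$, so taking logarithms and dividing by $\log \varepsilon_n = -n\log 3 < 0$ (which reverses the inequality) gives
\begin{equation*}
    \frac{\log \sup_{\mathcal{L} \in \mathcal{A}\br{d, d-1}} \mu_{\mathcal{K}}\br{\mathcal{L}^{\br{\varepsilon_n}}}}{\log \varepsilon_n} \le \frac{-n\log 2}{-n\log 3} = \frac{\log 2}{\log 3}.
\end{equation*}

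Finally, passing to the $\liminf$ along the sequence $\varepsilon_n \to 0$ and using that $\liminf_{\varepsilon \to 0}$ of a quantity is at most its $\liminf$ along any sequence tending to $0$, I conclude $\alpha_1\br{\mu_{\mathcal{K}}} \le \log 2 / \log 3$. The only steps requiring genuine care are the identification of $\mu_{\mathcal{K}}$ with the product measure $\mu_{\mathcal{C}}^{\otimes d}$ (so that the coordinate slice really has the claimed mass) and the sign reversal when dividing by the negative number $\log \varepsilon_n$; I do not expect a substantial obstacle. One could equally well use any $\varepsilon_n$ slightly larger than $3^{-n}$, or the hyperplane $\set{\mathbf{x} : x_i = c}$ for any $c \in \mathcal{C}$, without changing the limiting ratio.
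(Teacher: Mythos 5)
Your proof is correct, but it takes a genuinely different route from the paper's. You obtain the upper bound on $\alpha_1$ by producing a \emph{lower} bound on the supremum $\sup_{\mathcal{L}}\mu\br{\mathcal{L}^{\br{\varepsilon_n}}}$ along $\varepsilon_n=3^{-n}$, via the single coordinate hyperplane $\set{0}\times\mathbb{R}^{d-1}$ and the product structure $\mu_{\mathcal{K}}=\mu_{\mathcal{C}}^{\otimes d}$; since $\log\varepsilon_n<0$, a lower bound on the supremum becomes an upper bound on the ratio, and restricting the $\liminf$ to a subsequence can only increase it, so $\alpha_1\le\log 2/\log 3$ follows. This is the logically correct mechanism for an \emph{upper} bound on $\alpha_1$, and it is essentially the same single-subspace computation that the paper carries out in the lemma preceding this one (with $\mathcal{L}=\mathbb{R}^{d-l}\times\set{0}^{l}$ for general $l$). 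The paper's own proof of the present lemma instead runs an inductive covering argument yielding the \emph{upper} bound $\sup_{\mathcal{L}}\mu\br{\mathcal{L}^{\br{3^{-n}}}}\le C_d\,2^{-n}$ over \emph{all} hyperplanes; because dividing by the negative quantity $\log\varepsilon$ reverses inequalities, that estimate in fact controls $\alpha_1$ from \emph{below} (it is the substance of $\alpha_1\ge\log 2/\log 3$ and hence of Corollary~\ref{cor:alpha_1}), and the second inequality in the paper's final display points the wrong way as written. In short: your argument is the one that matches the direction of the claimed inequality and is considerably more elementary, while the covering argument buys the complementary lower bound. The two points you flag as needing care --- the product-measure identification and the sign reversal --- are both handled correctly.
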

    Let $\mathcal{L} \in \mathcal{A}\br{d, d-1}$. To prove this, we shall use an inductive argument on $d$. We shall be interested in covering $\mathcal{L}^\varepsilon$ with cubes of the form: $h_v\br{I^d}$ for $v \in E^n$ and $I = \seg{0,1}$. This will give us an upper bound on $\mu\br{L^\varepsilon}$, as for each such cube is $\mu\br{h_v\br{I^d}} = \frac{1}{2^{dn}}$, and then we can easily bound $\mu\br{L^\varepsilon}$ with the measure of the covering.
    \begin{lemma}
        There exists a constant  $C_d$ dependent only on the dimension $d$ such that for every affine hyperplane $\mathcal{L} \subset \mathcal K$, $n \in \mathbb{N}$, $\varepsilon = \frac{1}{3^n}$, we can find a covering of $\mathcal{L}^\varepsilon$ with at most $C_d 2^{\br{d-1}n}$ cubes of the form $h_v\br{I^d}$ for $v \in E^n$.
    \end{lemma}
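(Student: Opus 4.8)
The plan is to induct on $d$. For the base case $d=1$: an affine hyperplane in $\mathbb{R}^1$ is a single point, and $\mathcal{L}^\varepsilon$ is an interval of length $2\varepsilon = 2/3^n$. Since a single level-$n$ interval $h_v(I)$ has length $1/3^n$, the neighborhood $\mathcal{L}^\varepsilon$ meets at most a bounded number (say $C_1 = 4$) of them, giving the covering by $C_1 \cdot 2^{0 \cdot n} = C_1$ cubes. For the inductive step, suppose the statement holds in dimension $d-1$ with constant $C_{d-1}$. Given $\mathcal{L} \in \mathcal{A}(d, d-1)$, write its unit normal vector $\mathbf{w} = (w_1, \dots, w_d)$ and pick a coordinate direction, say the $i$-th, along which $|w_i|$ is largest; then $|w_i| \ge 1/\sqrt{d}$. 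The idea is to slice $\mathcal{K} = \mathcal{C}^d$ along the $i$-th coordinate: at level $n$ there are $2^n$ choices for the $i$-th coordinate's digit string, each corresponding to a slab $S_j$ of width $1/3^n$ in the $i$-th direction, and within each slab $\mathcal{L}$ looks (after projecting out the $i$-th coordinate) like an affine hyperplane in $\mathbb{R}^{d-1}$ — or the slab may miss $\mathcal{L}^\varepsilon$ entirely.

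The key geometric observation is that because $|w_i|$ is bounded below, $\mathcal{L}$ cannot be ``too parallel'' to the $i$-th axis, so $\mathcal{L}^\varepsilon$ intersects only a bounded number — at most some constant $\kappa_d$ depending only on $d$ — of the $2^n$ slabs $S_j$. Indeed, as we move across the slabs in the $i$-th direction by a total distance of order $\kappa_d / 3^n$, the signed distance to $\mathcal{L}$ changes by at least $|w_i| \cdot \kappa_d/3^n \gtrsim \kappa_d/(\sqrt{d}\,3^n)$, which exceeds $2\varepsilon = 2/3^n$ once $\kappa_d > 2\sqrt{d}$; so outside a window of $\kappa_d \approx 2\sqrt d + O(1)$ consecutive slabs, $\mathcal{L}^\varepsilon$ is empty. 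For each of the (at most $\kappa_d$) relevant slabs, intersecting with that slab and projecting along the $i$-th coordinate, we get that $\mathcal{L}^\varepsilon \cap S_j$ projects into the $\varepsilon'$-neighborhood of an affine hyperplane $\mathcal{L}_j \subset \mathbb{R}^{d-1}$ (intersected with $\mathcal{C}^{d-1}$), where $\varepsilon' = \varepsilon$ up to the bounded factor $1/|w_i| \le \sqrt d$; enlarging $\varepsilon'$ to the next power $1/3^{n'}$ with $n' \ge n - \log_3(\sqrt d) - 1$ loses only a bounded multiplicative factor, and the inductive hypothesis covers it by $C_{d-1} 2^{(d-2)n'} \le C_{d-1}' 2^{(d-2)n}$ cubes in $\mathbb{R}^{d-1}$. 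Lifting each such $(d-1)$-cube back to the single $i$-th digit string of the slab gives a cube $h_v(I^d)$ in $\mathbb{R}^d$.

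Summing over the at most $\kappa_d$ relevant slabs, the total number of $d$-cubes is at most $\kappa_d \cdot C_{d-1}' \cdot 2^{(d-2)n}$, which is even better than the claimed bound $C_d 2^{(d-1)n}$; one sets $C_d = \kappa_d C_{d-1}'$ (and the slack $2^{(d-2)n}$ versus $2^{(d-1)n}$ simply means the bound is not tight, which is harmless). Once this covering lemma is established, the corollary $\alpha_1 \le \frac{\log 2}{\log 3}$ follows immediately: for $\varepsilon = 1/3^n$ we get $\mu(\mathcal{L}^\varepsilon) \le C_d 2^{(d-1)n} \cdot 2^{-dn} = C_d 2^{-n}$, whence $\frac{\log \mu(\mathcal{L}^\varepsilon)}{\log \varepsilon} \le \frac{n\log 2 - \log C_d}{n \log 3} \to \frac{\log 2}{\log 3}$, and taking the $\liminf$ over all $\mathcal{L} \in \mathcal{A}(d,d-1)$ gives $\alpha_1 \le \frac{\log 2}{\log 3}$.

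I expect the main obstacle to be making the "bounded number of slabs" and "projection along the $i$-th coordinate" steps fully rigorous and uniform in $\mathcal{L}$ — in particular, tracking carefully how the width of the neighborhood transforms under the (non-orthogonal) projection that forgets the $i$-th coordinate, and verifying that the constant $\kappa_d$ and the loss in passing from $\varepsilon$ to a power of $1/3$ genuinely depend only on $d$ and not on $\mathcal{L}$ or $n$. The digit-string bookkeeping (which $v \in E^n$ corresponds to which slab and which lifted cube) is routine but must be set up precisely so that the cubes produced are exactly of the form $h_v(I^d)$ with $v \in E^n$.
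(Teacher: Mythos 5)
Your inductive strategy is correct in spirit (slice along one coordinate, project, induct), but the central geometric claim is false, and it leads to an incorrectly strong bound. You assert that choosing the coordinate $i$ with $|w_i|$ largest (so $|w_i| \ge 1/\sqrt{d}$) forces $\mathcal{L}^\varepsilon$ to meet only a bounded number $\kappa_d$ of the level-$n$ slabs in the $x_i$ direction. This is not true: $|w_i| \ge 1/\sqrt{d}$ is far too weak to confine the hyperplane to a thin slab. Take $\mathcal{L} = \{x_1 + \cdots + x_d = d/2\}$, for which every $|w_i| = 1/\sqrt{d}$; as the remaining coordinates range over $[0,1]^{d-1}$, the $x_i$ coordinate of points of $\mathcal{L}$ sweeps the whole interval $[0,1]$, so $\mathcal{L}$ (and a fortiori $\mathcal{L}^\varepsilon$) meets every one of the $3^n$ slabs, and all $2^n$ slabs of positive mass. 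Your heuristic that ``moving across slabs changes the signed distance to $\mathcal{L}$ by $|w_i|$ times the displacement'' implicitly assumes the motion is purely in the $i$-th direction, but points of $\mathcal{L}^\varepsilon$ in different slabs also differ in the other coordinates, which compensates. Consequently the claimed count $\kappa_d\, C'_{d-1}\, 2^{(d-2)n}$ is too good to be true; for $d=2$ and $\mathcal{L} = \{x+y=1\}$ the line actually hits $\sim 2^n$ of the level-$n$ Cantor squares, matching the stated exponent $2^{(d-1)n}$.

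There is a second, related problem with your choice of coordinate. Projecting out the coordinate with the \emph{largest} normal component $w_i$ leaves a hyperplane in $\mathbb{R}^{d-1}$ with normal $(w_1,\dots,\widehat{w_i},\dots,w_d)$, whose norm $\sqrt{1-w_i^2}$ can be arbitrarily small; the distance to that hyperplane (and hence the width $\varepsilon'$ of the neighborhood you need to feed to the induction) then blows up by the unbounded factor $1/\sqrt{1-w_i^2}$, not merely by $\sqrt{d}$ as you state. The paper's proof makes the opposite choice and this is where the argument actually closes: ordering $|a_1|\ge\cdots\ge|a_d|$ and slicing along $x_d$ (the \emph{smallest} coefficient), the projected error is $\frac{|a_d|\,|t - x'_d|}{\sqrt{\sum_{j<d}a_j^2}} \le 2\varepsilon$ precisely because $|a_d| \le |a_1| \le \sqrt{\sum_{j<d}a_j^2}$, so a $(3\varepsilon)$-neighborhood in $\mathbb{R}^{d-1}$ suffices. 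The paper then sums over \emph{all} $2^n$ positive-mass slabs, with each slab contributing $O(2^{(d-2)n})$ cubes by induction, giving the stated total $O(2^{(d-1)n})$. To repair your argument you would need to abandon the bounded-slab claim entirely, switch to the smallest normal component, and account for all $2^n$ slabs.
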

    \begin{proof}
        We shall begin with the base argument, for $d=1$. This case is quite simple, as $\mathcal{L}$ is a single point, which we can denote by $x$, and $\mathcal{L}^\varepsilon = \br{x-\varepsilon, x+\varepsilon}$. Clearly, such a segment, of length $2\varepsilon$ can intersect at most 3 disjoint segments of length $\varepsilon$. As all the segments (``cubes") $h_v\br{I}$ for $v \in E^n$ are of length $\frac{1}{3^n} = \varepsilon$, and are disjoint, we can choose $C_1 = 3$. One can even notice that every two segments are separated by a distance of at least $\varepsilon$, so we can event take $C_1 = 2$.
        \par Next, we shall continue with the induction step, with $d \ge 2$. We'll denote the coordinates in $\mathbb{R}^d$ by $x_i$ for $1 \le i \le d$, and $\mathcal{L}$'s equation by:
        \begin{equation}
            \mathcal{L} = \set{\mathbf{x} \mid \sum_{i=1}^d a_i x_i = d}.
        \end{equation}
        We shall observe ``slices" of cubes for a specific coordinate, where a slice is the set of cubes that have a common projection onto the given coordinate. We can assume WLOG that the coefficients of $\mathcal{L}$ are descending in size (absolute value), i.e. $\abs{a_1} \ge \abs{a_2} \ge ... \ge \abs{a_d}$. We shall look at slices with respect to coordinate $x_d$. Notice that only $2^n$ of $3^n$ of the slices have positive mass, as the projection onto $x_i$ coincides with cantor set, and so only the slices that coincide with the $n$-th step cubes of the one dimensional case have positive mass.
        \par Fix such a slice, $S = \seg{t, t+\varepsilon}$ (where $t = \frac{a}{3^n}$ for some $a \in \mathbb{N}$). We would like to count how many of the cubes are required to cover $\mathcal{L}^{\varepsilon}\cap\mathbb{R}^{d-1}\times S$. Observe $\mathcal{L}' = \set{\mathbf{x} \mid \sum_{i=1}^{d-1} a_i x_i = d- a_d t} \subset \mathbb{R}^{d-1}$. In this lower dimensional setting, we can use the previous step induction, to find a covering $A''$ of $\mathcal{L}'^{\br{3\varepsilon}}$ with at most $C_{d-1}2^{\br{d-2}\br{n-1}}$ dimensional cubes of dimension $d-1$ and size $3\varepsilon$. Notice each such cube can be broken up into $3^{d-1}$ cubes of size $\varepsilon$ out of which only $2^{d-1}$ have positive mass, so we can consider this to be a covering $A'$ of $C_{d-1}2^{d-1}2^{\br{d-2}\br{n-1}} = C_{d-1}2^{\br{d-2}n+1}$ cubes of size $\varepsilon$.
        \par Take $A = \set{B \times S \mid S \in A'}$. Notice that all of $A$'s elements are cubes of size $\varepsilon$ in our slice: they clearly project to $S$ on $x_d$, and as they where cubes of size $\varepsilon$ in $\mathbb{R}^{d-1}$ and $S$ is a segment of length $\varepsilon$ in $\mathcal{C}$, then they are cubes in the product space. We shall prove that $A$ covers $\mathcal{L}^{\varepsilon}\cap\mathbb{R}^{d-1}\times S$. Let $\mathbf{x} \in \mathcal{L}^{\varepsilon}\cap\mathbb{R}^{d-1}\times S$ then there exists some $\mathbf{x}' \in \mathcal{L}$ such that $d\br{\mathbf{x},\mathbf{x}'} < \varepsilon$. Clearly, as the distance is bounded by $\varepsilon$, so must be the difference in the $x_d$ coordinate, so we can deduce that $\mathbf{x}' \in \mathcal{L}\cap\mathbb{R}^{d-1}\times \br{t-\varepsilon, t+2\varepsilon}$. We would like to show that there exists some $B \in A$ such that $\mathbf{x} \in B$. Project onto $\mathbb{R}^{d-1}$. We denote the projection by $\pi$. Clearly the $d\br{\pi\br{\mathbf{x}}, \pi\br{\mathbf{x}'}} < \varepsilon$ as well. As $\mathbf{x}' \in \mathcal{L}$, we have $\sum_{i=1}^d a_i x'_i = d$, so $\sum_{i=1}^{d-1} a_i x'_i = d - a_d x'_d$. Therefore, we have:
        \begin{equation}
            d\br{\mathcal{L}', \pi\br{\mathbf{x}'}} = \frac{\abs{a_d t - a_d x'_d}}{\sqrt{\sum_{i=1}^{d-1} a_i^2}} = \frac{\abs{a_d}\abs{t - a_d x'_d}}{\sqrt{\sum_{i=1}^{d-1} a_i^2}} < \frac{\abs{a_d}2\varepsilon}{\sqrt{\sum_{i=1}^{d-1} a_i^2}} \le 2 \varepsilon,
        \end{equation}
        where the last inequality is because $a_d$ is the smallest coefficient (in absolute value, by our assumption earlier), and the equation before last is by $\mathbf{x}' \in \mathcal{L}\cap\mathbb{R}^{d-1}\times \br{t-\varepsilon, t+2\varepsilon}$.
        \par Therefore, we have that $d\br{\mathcal{L}', \pi\br{\mathbf{x}'}} < 2\varepsilon$, and $d\br{\pi\br{\mathbf{x}}, \pi\br{\mathbf{x}'}} < \varepsilon$ so $d\br{\mathcal{L}', \pi\br{\mathbf{x}}} < 3\varepsilon$, i.e. $\pi\br{\mathbf{x}} \in \mathcal{L}'^{\br{\varepsilon}}$, and therefore by our construction, there exists some $B' \in A'$ such that $\pi\br{\mathbf{x}} \in B'$. Finally, notice that $\mathbf{x} \in B'\times S \in A$, concluding the proof that $A$ covers the slice $\mathcal{L}^{\varepsilon}\cap\mathbb{R}^{d-1}\times S$.
        \par Recall that there are at most $2^n$ relevant slices with positive measure, and as within each slice, $\mathcal{L}^{\br{\varepsilon}}$ can be covered with at most $C_{d-1}2^{\br{d-2}n+1}$ cubes of size $\varepsilon$ as we just saw. Therefore we can cover $\mathcal{L}^{\br{\varepsilon}}$ entirely with $2^n C_{d-1}2^{\br{d-2}n+1} = 2 C_{d-1}2^{\br{d-1}n}$ cubes. Taking $C_d := 2C_{d-1}$ concludes our proof.
    \end{proof}
    We can now prove Lemma \ref{lemma:consts:upper_bound}.
    \begin{proof}
        Let $\mathcal{L} \subset \mathcal{K}$ be an affine hyperplane. By the previous lemma, for $\varepsilon = \frac{1}{3^n}$, $\mathcal{L}^{\br{\varepsilon}}$ can be covered by at most $C_d 2^{\br{d-1}n}$ cubes of size $\varepsilon$, each of measure $\frac{1}{2^{dn}}$, and so: $\mu\br{\mathcal{L}^{\br{\varepsilon}}} \le \frac{C_d 2^{\br{d-1}n}}{2^{dn}} = \frac{C_d}{2^n}$. Therefore:
        \begin{multline}
            \alpha_1\br{\mu} := \liminf\limits_{\varepsilon \rightarrow 0} \frac{\log \sup_{\mathcal{L} \in \mathcal{A}\br{d, d-1}} \mu \br{\mathcal{L}^{\br{\varepsilon}}}}{\log\br{\varepsilon}} \le \liminf\limits_{n \rightarrow \infty} \frac{\log \sup_{\mathcal{L} \in \mathcal{A}\br{d, d-1}} \mu \br{\mathcal{L}^{\br{\frac{1}{3^n}}}}}{\log\br{\frac{1}{3^n}}} \le \\ \le \liminf\limits_{n \rightarrow \infty} \frac{\log\br{\frac{C_d}{2^n}}}{\log\br{\frac{1}{3^n}}} = \liminf\limits_{n \rightarrow \infty} \frac{n \log 2 - \log C_d}{n \log 3} = \frac{\log 2}{\log 3}.
        \end{multline}
    \end{proof}
    \begin{corollary}\label{cor:alpha_1}
        Let $d \ge 1$, $\mathcal{K} = \mathcal{C}^d$ and $\mu$ be the measure on $\mathcal{K}$ derived from the Haar measure. Then $\alpha_1\br{\mu} = \frac{\log 2}{\log 3}$
    \end{corollary}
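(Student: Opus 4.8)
The statement is an immediate consequence of the two inequalities already established in this section, so the plan is simply to assemble them. First I would read the measure $\mu$ in the statement as the one fixed throughout the section, namely the restriction to $\mathcal{K} = \mathcal{C}^d$ of the $s$-dimensional Hausdorff measure with $s = \dim_H(\mathcal{K})$; by Example \ref{example:hausdorff} this is the Bernoulli measure obtained from the uniform law on $E = \set{0,2}^d$, so no independent argument about a ``Haar'' measure is needed. I would insert a one-line remark to this effect to avoid any ambiguity in the hypothesis.

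Next I would invoke the lemma giving $\alpha_l(\mu) \ge l\,\frac{\log 2}{\log 3}$ and specialize it to $l = 1$, which yields the lower bound $\alpha_1(\mu) \ge \frac{\log 2}{\log 3}$. For the reverse inequality I would simply cite Lemma \ref{lemma:consts:upper_bound}, which asserts $\alpha_1(\mu) \le \frac{\log 2}{\log 3}$ for $\mathcal{K} = \mathcal{C}^d$. Combining the two gives $\alpha_1(\mu) = \frac{\log 2}{\log 3}$, as claimed.

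\textbf{Main obstacle.} There is essentially no obstacle here: the corollary packages results already proved. The only point requiring a sentence of care is the bookkeeping on which measure is meant (the Hausdorff/uniform-Bernoulli measure), and, if one wished to be thorough, observing that the sub-level estimate $\mu(\mathcal{L}^{(\varepsilon)}) \le \frac{C_d}{2^n}$ from the covering lemma is uniform over $\mathcal{L} \in \mathcal{A}(d, d-1)$ so that it legitimately controls the supremum inside the $\liminf$ defining $\alpha_1$ — but this is already carried out inside the proof of Lemma \ref{lemma:consts:upper_bound}. Hence the write-up is a two-line deduction.
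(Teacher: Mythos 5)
Your proposal is correct and coincides with the paper's own proof, which deduces the corollary directly from the lower bound in (\ref{eq:alpha:lower_bound}) (the case $l=1$) together with Lemma \ref{lemma:consts:upper_bound}. Your remark that the measure should be read as the Hausdorff-derived Bernoulli measure (``Haar'' being a slip in the statement) is a sensible clarification but changes nothing in the argument.
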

    \begin{proof}
        Immediate from Lemma \ref{lemma:consts:upper_bound} and (\ref{eq:alpha:lower_bound}).
    \end{proof}
}

\bibliographystyle{alpha}
\bibliography{ref}
\end{document}